\renewcommand{\PrintDOI}[1]{%
  \href{http://dx.doi.org/#1}{{\tt DOI:#1}}%
}
\renewcommand{\eprint}[1]{#1}
\numberwithin{equation}{section}
\newtheorem{theorem}{Theorem}[section]
\newtheorem{corollary}[theorem]{Corollary}
\newtheorem{lemma}[theorem]{Lemma}
\newtheorem{proposition}[theorem]{Proposition}
\theoremstyle{remark}
\newtheorem{remark}[theorem]{Remark}
\newtheorem{example}[theorem]{Example}
\theoremstyle{definition}
\newtheorem{definition}[theorem]{Definition}
\newcommand{\bp}{\begin{proof}}
\newcommand{\ep}{\end{proof}}
\DeclareFontFamily{U}{matha}{\hyphenchar\font45}
\DeclareFontShape{U}{matha}{m}{n}{
      <5> <6> <7> <8> <9> <10> gen * matha
      <10.95> matha10 <12> <14.4> <17.28> <20.74> <24.88> matha12
      }{}
\DeclareSymbolFont{matha}{U}{matha}{m}{n}
\DeclareFontFamily{U}{mathb}{\hyphenchar\font45}
\DeclareFontShape{U}{mathb}{m}{n}{
      <5> <6> <7> <8> <9> <10> gen * mathb
      <10.95> mathb10 <12> <14.4> <17.28> <20.74> <24.88> mathb12
      }{}
\DeclareSymbolFont{mathb}{U}{mathb}{m}{n}
\DeclareMathSymbol{\ovoid}{3}{matha}{"6C}
\DeclareMathSymbol{\boxvoid}      {2}{mathb}{"6C}
\newcommand{\circt}%
{\mathbin{%
\mathchoice
{\ooalign{$\ocircle$\cr\hidewidth\raise-.15ex\hbox{$\scriptstyle\top\mkern1.7mu$}\cr}}
{\ooalign{$\ocircle$\cr\hidewidth\raise-.15ex\hbox{$\scriptstyle\top\mkern1.7mu$}\cr}}
{\ooalign{$\scriptstyle\ocircle$\cr\hidewidth\raise-.12ex\hbox{$\scriptscriptstyle\top\mkern1mu$}\cr}}
{\ooalign{$\scriptstyle\ocircle$\cr\hidewidth\raise-.12ex\hbox{$\scriptscriptstyle\top\mkern1mu$}\cr}}
}}
\mathchardef\mhyph="2D
\newcommand\Mod{\mathrm{Mod}}
\newcommand{\C}{\mathbb{C}}
\newcommand\A{\mathcal A}
\newcommand{\CC}{\mathcal{C}}
\newcommand{\BB}{\mathcal{B}}
\newcommand\DD{\mathcal{D}}
\newcommand\U{\mathcal U}
\newcommand{\un}{\mathds{1}}
\newcommand\Hilbf{{\Hilb}_f}
\newcommand{\cA}{\mathcal{A}}
\newcommand{\cC}{\mathcal{C}}
\newcommand{\cD}{\mathcal{D}}
\newcommand{\cE}{\mathcal{E}}
\newcommand{\cX}{\mathcal{X}}
\newcommand{\cY}{\mathcal{Y}}
\newcommand{\norm}[1]{\left\|#1\right\|}
\newcommand\op{\mathrm{op}}
\newcommand{\opos}{\mathrm{op}}
\newcommand{\Hilb}{\mathrm{Hilb}}
\newcommand{\Id}{\mathrm{Id}}
\DeclareMathOperator{\Ad}{Ad}
\DeclareMathOperator\BrPic{BrPic}
\DeclareMathOperator{\Corr}{Corr}
\DeclareMathOperator{\Bimod}{Bimod}
\DeclareMathOperator{\Irr}{Irr}
\DeclareMathOperator{\Mat}{Mat}
\DeclareMathOperator{\Mor}{Mor}
\DeclareMathOperator{\End}{End}
\DeclareMathOperator{\Hom}{Hom}
\DeclareMathOperator{\inHom}{\underline{Hom}}
\DeclareMathOperator{\inEnd}{\underline{End}}
\DeclareMathOperator{\Tr}{Tr}
\DeclareMathOperator{\tr}{tr}
\DeclareMathOperator{\Rep}{Rep}
\newcommand{\lmodcat}[1]{{#1}\mhyph\mathrm{Mod}}
\newcommand{\rmodcat}[1]{\mathrm{Mod}\mhyph{#1}}
\newcommand{\lmodcatin}[2]{{#1}\mhyph\mathrm{Mod}_{#2}}
\newcommand{\rmodcatin}[2]{\mathrm{Mod}_{#2}\mhyph{#1}}
\newcommand{\bimodcat}[1]{\Bimod\mhyph{#1}}
\newcommand{\bimodcatin}[2]{\Bimod_{#2}\mhyph{#1}}
\newcommand{\Amod}{\lmodcat{A}}
\newcommand{\modA}{\rmodcat{A}}
\newcommand{\modQin}[1]{\rmodcatin{Q}{#1}}
\theoremstyle{remark}
\begin{document}

\title{Categorically Morita equivalent compact quantum groups}

\author[S. Neshveyev]{Sergey Neshveyev}

\email{sergeyn@math.uio.no}

\address{Department of Mathematics, University of Oslo, P.O. Box 1053
Blindern, NO-0316 Oslo, Norway}

\thanks{Supported by the European Research Council under the European Union's Seventh
Framework Programme (FP/2007-2013)
/ ERC Grant Agreement no. 307663
}

\author[M. Yamashita]{Makoto Yamashita}

\email{yamashita.makoto@ocha.ac.jp}

\address{Department of Mathematics, Ochanomizu University, Otsuka
2-1-1, Bunkyo, 112-8610 Tokyo, Japan}

\begin{abstract}
We give a dynamical characterization of categorical Morita
equivalence between compact quantum groups. More precisely, by a
Tannaka--Krein type duality, a unital C$^*$-algebra endowed with
commuting actions of two compact quantum groups corresponds to a
bimodule category over their representation categories. We show that
this bimodule category is invertible if and only if the actions are
free, with finite dimensional fixed point algebras, which are in
duality as Frobenius algebras in an appropriate sense. This
extends the well-known characterization of monoidal equivalence in
terms of bi-Hopf--Galois objects.
\end{abstract}

\date{April 16, 2017; minor corrections May 28, 2018}

\maketitle

\section*{Introduction}

The Tannaka--Krein duality principle, which roughly says that a
quantum group is characterized by its representation category viewed
as a concrete category of vector spaces, has played fundamental role
in the development of various approaches to quantum groups. In
mathematical physics, the attempts by the Leningrad school to find
an algebraic structure behind the solutions of Yang--Baxter
equations ($R$-matrices) led to the famous Drinfeld--Jimbo quantized
universal enveloping algebras, where $R$-matrices are regarded as
intertwiners of tensor products of representations of Hopf algebras.
In the operator algebraic framework, Woronowicz's Tannaka--Krein
duality theorem has been used to construct many examples of compact
quantum groups beyond the $q$-deformations, see, e.g.,~\cite{MR2718205}.

One natural question arising from this principle is the following:
which categorical concepts for representation categories of quantum
groups admit Hopf algebraic formulations?
For example, the most fundamental question of when one has an
equivalence $\Rep G_1 \cong \Rep G_2$ as abstract monoidal
categories, has a very satisfactory answer due to
Schauenburg~\cite{MR2075600} building on an earlier work of
Ulbrich~\cite{MR915993}. It says that the
representation categories are monoidally equivalent precisely when
there is a $G_1$-$G_2$-Hopf--Galois object,
which is an algebra with commuting coactions of the function
algebras of $G_1$ and $G_2$, which are separately `free' and
`transitive' (or `ergodic').

In the operator algebraic context, the C$^*$-analogue of this
characterization~\cite{MR2202309} has been fruitfully used by many
authors to deduce analytic properties of one compact or discrete
quantum group from another, starting from the work of Vaes and
Vergnioux~\cite{MR2355067}, where they showed that exactness of the
reduced function algebra of a compact quantum group is invariant
under monoidal equivalence. More recently, induction of central
multipliers along bi-Hopf--Galois objects was used to show that free quantum
groups have the
Haagerup property and the weak amenability~\citelist{\cite{MR3084500}\cite{MR3238527}}. Another
interesting development is the introduction of central property (T)
by Arano~\cite{arXiv:1410.6238}, which suggests that there is a
close connection between harmonic analysis on the representation
categories of the $q$-deformations of compact Lie groups and the
classical theory of unitary representations of complex semisimple
Lie groups.

These works have led to a study of analytic properties of
C$^*$-tensor categories, which also has roots in Popa's earlier work
on approximation properties of standard invariants of
subfactors~\cite{MR1729488}.  Indeed, as has been shown by Popa and
Vaes~\cite{MR3406647} and the authors~\cite{MR3509018},  `central'
approximation properties of quantum groups considered in~\citelist{\cite{MR3238527}\cite{arXiv:1410.6238}} can be formulated at the purely categorical level. As
one of the applications, this has allowed one to unify property (T)
for (quantum) groups and property (T) for subfactors.

One crucial insight from the subfactor theory is that there is a
more interesting equivalence relation on tensor categories beyond
the mere equivalence. It corresponds to exchanging a subfactor for
its dual inclusion, and in the case of fusion categories, a relevant
notion was introduced by M\"{u}ger~\cite{MR1966524} under the name
of \emph{weak monoidal Morita equivalence}, which is now also called
\emph{categorical Morita equivalence}. Namely, two fusion categories
$\CC_1$ and $\CC_2$ are called weakly monoidally Morita equivalent
if one of them is monoidally equivalent to the category of bimodules
over a Frobenius algebra in the
other~\citelist{\cite{MR1257245}\cite{MR2075605}}. In a more
symmetric form this can be formulated in terms of $2$-categories, or
as existence of an invertible $\CC_1$-$\CC_2$-module
category~\cite{MR2677836}, see Section~\ref{sec:invertible} for
precise definitions. Yet another characterization is that the
Drinfeld centers of $\CC_1$ and $\CC_2$ are equivalent as braided
monoidal categories~\citelist{\cite{MR1822847}\cite{MR2735754}}. Most of
these admit straightforward generalizations to the setting of infinite C$^*$-tensor categories,
although a characterization of categorical Morita equivalence in terms of the Drinfeld center seems to remain as an interesting problem.

Popa's work on subfactors implies that sensible analytic properties
should be invariant under categorical Morita equivalence. For
central property (T), this is indeed the
case~\citelist{\cite{MR3509018}\cite{arXiv:1511.06332}}. It is therefore natural to expect that categorical Morita equivalence should be useful in
studying analytic properties of quantum groups.

The goal of the present paper is to give an algebraic
characterization of categorical Morita equivalence for
representation categories of compact quantum groups. By the
Tannaka--Krein type duality for quantum group
actions~\citelist{\cite{MR1976459}\cite{MR3121622}\cite{MR3426224}},
bimodule categories over representation categories correspond to
C$^*$-algebras with commuting actions of the quantum groups. Namely,
given commuting actions of compact quantum groups $G_1$ and $G_2$ on
a unital C$^*$-algebra $A$, we can consider the category $\DD_A$ of
equivariant finitely generated right Hilbert $A$-modules. Therefore
the precise question we are going to answer is the following: under
what conditions is the category $\DD_A$ invertible as a $(\Rep
G_2)$-$(\Rep G_1)$-module category?

Our main result (Theorem~\ref{thm:main}) states that $\DD_A$ is
invertible if and only if the actions are separately free, have
finite dimensional fixed point algebras $A^{G_1}$ and $A^{G_2}$, and
that these algebras sit nicely in $A$ so that the equivariant
$A$-modules $A^{G_1} \otimes A$ and $A^{G_2} \otimes A$ are
isomorphic in a way that respects the actions of $A^{G_1}$ and
$A^{G_2}$, which we call the \emph{$G_1$-$G_2$-Morita--Galois condition}.
When the actions are ergodic, so that the fixed point subalgebras
are trivial, we recover the bi-Hopf--Galois condition.

Finally, let us note that $2$-categories have close connection to
the theory of quantum groupoids, and a result of De Commer and
Timmermann gives a characterization of categorical Morita
equivalence of compact quantum groups in terms of what they call
\emph{partial compact quantum groups}~\cite{MR3353033}. Their construction gives a `zigzag' of the so-called linking and co-linking quantum groupoids between $G_1$ and $G_2$, as opposed to our one-step construction. They also do not give any characterization of the `off-diagonal' parts of the co-linking groupoids, and the overall construction involving Hayashi's canonical partial quantum groups associated with the representation categories of $G_1$ and $G_2$ seems to be more involved than ours. At the same time their construction works beyond our setting of compact quantum groups and allows one to capture categorical Morita equivalence of several (partial) compact quantum groups at once. It would be an interesting problem to find a characterization of the `off-diagonal' parts of their co-linking groupoids and compare it with our results, but we do not attempt to go in this direction in the present paper.

\smallskip

The paper consists of four sections and an appendix.
Section~\ref{sec:prelim} is a recollection of basic conventions and
results that we use freely throughout the paper.

Section~\ref{sec:Frobenius} is also of preliminary nature. Here we
discuss Frobenius algebras in C$^*$-tensor categories and modules
over them, and compare such algebras in the category of finite dimensional Hilbert spaces to finite dimensional C$^*$-algebras with prescribed faithful states (Frobenius C$^*$-algebras).
A large part of this material is surely known to experts,
but for the lack of a comprehensive reference we provide proofs of
many results.

Section~\ref{sec:invertible} is the main part of the paper. Here we
formulate and prove our main result indicated above.
We also provide a one-sided variant (Theorem~\ref{thm:one-sided-Morita-Galois}) which starts from a single quantum group $G$ and its action, and then produces another categorically Morita equivalent quantum group, generalizing the notion of $G$-Hopf--Galois objects.

In Section~\ref{sec:relten} we discuss relative tensor products of
invertible bimodule categories which correspond to the transitivity of Morita equivalence, as well as give a few examples.

In Appendix we discuss a correspondence between module categories
and Frobenius algebras. In the purely algebraic setting the
existence of such a correspondences was established by
Ostrik~\cite{MR1976459}. Its adaption to the C$^*$-setting is
formulated in~\cite{arXiv:1511.07982}, but we believe certain points
concerning unitarity deserve further explanation.

\smallskip

\paragraph{\bf Acknowledgement} {\small
The results of this paper were obtained during the first author's
visits to the University of Tokyo and Ochanomizu University. The
preparation of the paper was completed during his visit to the Texas
A\&M University. It is his pleasure to thank the staff of these
universities for hospitality, and Yasuyuki Kawahigashi and Ken
Dykema for making these visits possible.}

\section{Preliminaries}\label{sec:prelim}

\subsection{Quantum groups and tensor categories}

For a detailed discussion of C$^*$-tensor categories and compact
quantum groups we refer the reader to~\cite{MR3204665}. Let us just
recall a few basic definitions and facts.

\smallskip

A \emph{C$^*$-category} is a category $\CC$ where the morphisms sets
$\CC(U,V)$ are complex Banach spaces endowed with complex conjugate
involution $\CC(U, V) \to \CC(V, U)$, $T \mapsto T^*$, satisfying
the C$^*$-identity (so that every endomorphism ring
$\CC(X)=\CC(X,X)$ becomes a C$^*$-algebra) and having the property
$T^*T\ge0$ in $\cC(X)$ for any $T \in \cC(X, Y)$. The most basic
example of such a category is $\Hilb_f$, the category of finite
dimensional Hilbert spaces. We tacitly assume that $\CC$ is closed
under finite direct sums and subobjects, which means that any
idempotent in the endomorphism ring~$\CC(U)$ comes from a direct
summand of~$U$.

A \emph{unitary functor}, or a \emph{C$^*$-functor}, $F\colon \cC
\to \cC'$ between C$^*$-categories is a $\C$-linear functor from
$\cC$ to $\cC'$ satisfying $F(T^*) = F(T)^*$.

A \emph{C$^*$-tensor category} is a C$^*$-category
endowed with a unitary bifunctor $\otimes \colon
\CC \times \CC \to \CC$, a distinguished object $\un \in \CC$, and
natural unitary isomorphisms
\begin{align*} \un \otimes U &\cong U \cong U \otimes \un,& \Phi&\colon (U \otimes V) \otimes W \to U \otimes (V \otimes W)
\end{align*} satisfying certain compatibility conditions. Unless said
otherwise, we always assume that $\CC$ is \emph{strict}, that is,
the above isomorphisms are the identity morphisms, but thanks to a
C$^*$-analogue of Mac Lane's coherence theorem this does not lead to
loss of generality. We also assume that the unit $\un$ is simple.

A \emph{C$^*$-$2$-category} on a set $I$ of `$0$-cells' is given by
a collection of C$^*$-categories $\cC_{st}$ for $s,t \in I$
together with bilinear unitary bifunctors $\cC_{rs} \times
\cC_{st}\to\cC_{rt}$ and unit objects $\un_s\in\cC_{ss}$. The axioms
which this structure satisfies are analogous to those of strict
C$^*$-tensor categories. In other words, the main difference from
the latter categories is that the tensor product $X \otimes Y$ is
defined not for all objects, but only when $X\in \cC_{rs}$ and $Y\in
\cC_{st}$, and then $X \otimes Y\in\cC_{r t}$. Again, it is possible
to consider a non-strict version, \emph{C$^*$-bicategories}, but we
do not do this as there is no essential loss of generality in considering only C$^*$-$2$-categories.

A \emph{unitary tensor functor}, a \emph{unitary monoidal functor},
or a \emph{C$^*$-tensor functor}, $\CC\to\CC'$ between C$^*$-tensor
categories is a pair consisting of a unitary functor $F\colon
\CC\to\CC'$, such that $F(\un_\CC)\cong\un_{\CC'}$, and a collection
$F_2$ of natural unitary isomorphisms $F(U)\otimes F(V)\to
F(U\otimes V)$ such that $F_2(F_2\otimes\iota)=F_2(\iota\otimes
F_2)\colon F(U)\otimes F(V)\otimes F(W)\to F(U\otimes V\otimes W)$.
If $F(\un_\CC)=\un_{\CC'}$, $F(U\otimes V)= F(U)\otimes F(V)$ and
$F_2\colon F(U)\otimes F(V)\to F(U)\otimes F(V)$ are the identity
morphisms, then we say that we have a strict tensor functor.

A C$^*$-tensor category $\CC$ is called \emph{rigid}, if every
object $U$ has a conjugate object, that is, there exist an object
$\bar U$ and morphisms $R\colon\un\to \bar U\otimes U$ and $\bar
R\colon\un\to U\otimes\bar U$ solving the conjugate equations
\begin{align*}
(R^*\otimes\iota_{\bar U})(\iota_{\bar U}\otimes\bar R)&=\iota_{\bar
U},&
(\bar R^*\otimes\iota_U)(\iota_U\otimes R)&=\iota_U.
\end{align*}
The minimum $d(U)$ of the numbers $\|R\|\,\|\bar R\|$ over all
solutions is called the \emph{intrinsic dimension} of $U$. A solution $(R,\bar R)$ is
called \emph{standard} if $\|R\|=\|\bar R\|=d(U)^{1/2}$. Any
standard solution $(R_U,\bar R_U)$ defines a trace on $\CC(U)$ by
$$
\Tr_U(T)=R_U^*(\iota\otimes T)R_U,
$$
which is independent of any choices and is also equal to $\bar R_U^*(T\otimes \iota)\bar
R_U$ ({\em sphericity}). The normalized trace
$d(U)^{-1}\Tr_U$ is denoted by $\tr_U$. More generally, we have
\emph{partial categorical traces}
$$
\Tr_U\otimes\iota\colon\CC(U\otimes V,U\otimes W)\to\CC(V,W),\ \
T\mapsto(R^*_U\otimes\iota)(\iota\otimes T)(R_U\otimes\iota),
$$
and similarly $\iota\otimes\Tr_U\colon\CC(V\otimes U,W\otimes
U)\to\CC(V,W)$. Once standard solutions are fixed, we can define a $*$-preserving
anti-multiplicative map $\cC(U,V)\to\cC(\bar V,\bar U)$, $T\mapsto
T^\vee$, such that
\begin{align*}
(\iota\otimes T)R_U&=(T^\vee\otimes\iota)R_V,&
(T\otimes\iota)\bar R_U&=(\iota\otimes T^\vee)\bar R_V.
\end{align*}

Rigidity can be similarly formulated for C$^*$-$2$-categories.
Briefly, in the above notation a dual of $X \in \cC_{s t}$ is given
by an object $\bar{X} \in \cC_{t s}$ and morphisms $R \colon \un_t
\to \bar{X} \otimes X$, $\bar{R}\colon \un_s \to X \otimes \bar{X}$
satisfying the conjugate equations of the same form. The dimension
$d(X)$ and standard solutions $(R_X, \bar{R}_X)$ make sense, and the
functional $\Tr_X(T) = R_X^*(\iota \otimes T) R_X^*$ on
$\cC_{st}(X)$ is tracial and satisfies the sphericity condition.

\smallskip

An example of a rigid C$^*$-tensor category is the representation
category of a \emph{compact quantum group}. A compact quantum group
$G$ is represented by a unital C$^*$-algebra $C(G)$ equipped with a
unital $*$-homomor\-phism $\Delta \colon C(G)\to C(G)\otimes C(G)$
satisfying the coassociativity $(\Delta \otimes \iota) \Delta =
(\iota \otimes \Delta) \Delta$ and the cancellation property,
meaning that $(C(G) \otimes 1) \Delta(C(G))$ and $(1 \otimes C(G))
\Delta(C(G))$ are dense in $C(G) \otimes C(G)$.  There is a unique
state $h$ satisfying $(h \otimes \iota) \Delta = h(\cdot)1$ and
$(\iota \otimes h) \Delta = h(\cdot)1$ called the \emph{Haar state}.
If $h$ is faithful, then~$G$ is called a \emph{reduced} quantum
group. Throughout the whole paper we only consider such quantum
groups.

A finite dimensional \emph{unitary representation} of $G$ is a
unitary element $U\in B(H_U)\otimes C(G)$, where~$H_U$ is a finite
dimensional Hilbert space, such that
$(\iota\otimes\Delta)(U)=U_{12}U_{13}$. The tensor product of two
representations~$U$ and~$V$ is defined by $U\circt V=U_{13}V_{23}$.
This turns the category  $\Rep G$ of finite dimensional unitary
representations of $G$ into a rigid C$^*$-tensor category.

The duality in the category $\Rep G$ can be described as follows.
Take the \emph{regular algebra} $\C[G]$ of $G$, which is the dense
$*$-subalgebra of $C(G)$ spanned by the matrix coefficients of
finite dimensional representations. It is a Hopf $*$-algebra, with
the antipode characterized by $(\iota \otimes S)(U) = U^*$ for any
unitary representation~$U$. Consider the dual space $\U(G)=\C[G]^*$.
It has the structure of a $*$-algebra, defined by duality from the
Hopf $*$-algebra structure on~$\C[G]$. Every finite dimensional
unitary representation $U$ of $G$ defines a
$*$-representation~$\pi_U$ of~$\U(G)$ on~$H_U$ by
$\pi_U(\omega)=(\iota\otimes\omega)(U)$. We often omit $\pi_U$ in
expressions and write~$\omega\xi$ instead of $\pi_U(\omega)\xi$.
There is a canonical positive element $\rho\in\U(G)$, called the
\emph{Woronowicz character}, characterized by
\begin{align*}
(\iota\otimes S^2)(U)
&=(\pi_U(\rho)\otimes1)U(\pi_U(\rho^{-1})\otimes1),& \Tr\pi_U(\rho)
&=\Tr\pi_{U}(\rho^{-1})
\end{align*}
for any finite dimensional unitary representation $U$.

The (non-unitary) \emph{contragredient} representation of $U$ is
given by $U^c=(j\otimes\iota)(U^*)\in B(\bar H_U)\otimes\C[G]$,
where~$j$ denotes the canonical $*$-anti-isomorphism $B(H_U)\cong
B(\bar H_U)$ defined by $j(T)\bar\xi=\overline{T^*\xi}$. Its
unitarization, the \emph{conjugate} unitary representation
$\bar{U}$, is given by
$$
\bar U=(j(\pi_U(\rho))^{1/2}\otimes1) U^c (j(\pi_U(\rho))^{-1/2}\otimes1).
$$
Although $S$ does not satisfy $S^2 = \iota$ nor $S(x^*) = S(x)^*$ (which are in fact equivalent) and is not bounded
on $C(G)$ in general, the \emph{unitary antipode} $R$, which is characterized by $(j
\otimes R)(U) = \bar{U}$, does satisfy these properties.

Finally, standard solutions of the conjugate equations can be defined by
\begin{equation}
\label{eq:std-sol-from-Woronowicz-char}
R_U(1)=\sum_i\bar\xi_i\otimes\rho^{-1/2}\xi_i\ \ \text{and}\ \ \bar
R_U(1)=\sum_i\rho^{1/2}\xi_i\otimes\bar\xi_i,
\end{equation}
where $(\xi_i)_i$ is an orthonormal basis in $H_U$. Note that for
this choice of standard solutions we have $T^\vee=j(T)$. The above
expressions for standard solutions imply that the dimension $d(U)$
coincides with the quantum dimension $\dim_q
U=\Tr\pi_U(\rho)=\Tr\pi_U(\rho^{-1})$. We also have $j(\pi_U(\rho)) =
\pi_{\bar{U}}(\rho^{-1})$ on $\bar{H}_U = H_{\bar{U}}$.

\smallskip

For a unitary representation $U$, it will often be convenient to
view $H_U$ either as a unitary right comodule over $\C[G]$ by
letting $\delta_U(\xi)=U(\xi\otimes1)$, or as a unitary left
comodule by letting $\delta_U(\xi)=U^*_{21}(1\otimes\xi)$. This
should not cause any confusion, as for a fixed compact quantum group
we always use only one point of view depending on whether we
consider right or left comodule algebras, and that will always be
clearly stated. Note that if we consider the spaces $H_U$ as right
comodules, the tensor product of representations of $G$ corresponds
to the tensor product of right comodules, while if we consider $H_U$
as left comodules, it corresponds to the opposite tensor product of
left comodules.

\subsection{Tannaka--Krein duality for quantum group actions}
\label{sec:TK}

A \emph{left action} of a compact quantum group~$G$ on a unital
C$^*$-algebra $A$ is represented by an injective unital $*$-homomorphism
$\alpha\colon A\to C(G)\otimes A$ such that
$(\Delta\otimes\iota)\alpha=(\iota\otimes\alpha)\alpha$, and that
$(C(G)\otimes1)\alpha(A)$ is dense in $C(G)\otimes A$. In this case
we say that $A$ is a left $G$-C$^*$-algebra, and also write $G \curvearrowright A$ to express this situation. Given such an
algebra, we have a distinguished subalgebra $\A\subset A$, called the \emph{regular subalgebra}, spanned by the
elements $a$ (the \emph{regular elements}) such that $\alpha(a)$ lies in the algebraic tensor
product of $\C[G]$ and $A$. More concretely, $\A$ is the linear
span of elements $(h(\cdot\, x)\otimes\iota)\alpha(a)$, where
$x\in\C[G]$ and $a\in A$. It is a dense unital $*$-subalgebra of $A$, and the restriction of $\alpha$ to $\A$ turns it into a
left $\C[G]$-comodule algebra in the purely algebraic sense.

Consider the fixed point C$^*$-algebra
$$
B = A^G = \{a\in A\mid\alpha(a)=1\otimes a\}.
$$
Denote by $\Corr(B)$ the C$^*$-tensor
category of C$^*$-correspondences over $B$, that is, the category of
right Hilbert $B$-modules $X$ equipped with a unital
$*$-homomorphism from $B$ into the C$^*$-algebra of adjointable maps
on $X$. This category is not rigid and generally it has nonsimple
unit. We will mostly be interested in the case when $B$ is finite
dimensional, and instead of $\Corr(B)$ we will work with its full
subcategory $\bimodcat{B}$ of finite dimensional correspondences.

Define a functor $F\colon\Rep G\to\Corr(B)$ by
$$
F(U)=(H_U\otimes A)^{G}=(H_U\otimes\A)^G.
$$
Here, according to our convention, we view $H_U$ as a left
$\C[G]$-comodule, since $\A$ is a left comodule algebra, and then
$F(U)$ is the space of invariant vectors in the tensor product of
comodules $H_U$ and $\A$. Note that if we did consider $H_U$ as a
right comodule, then we could write $F(U)$ as the cotensor product
$H_U\boxvoid_G \A$. The $B$-valued inner product on $(H_U\otimes
\A)^G$ is obtained by restricting the obvious $A$-valued inner
product on $H_U\otimes A$: $\langle \xi\otimes a,\zeta\otimes
b\rangle_A=(\zeta,\xi)a^*b$.\footnote{Our convention is that inner products on Hilbert spaces are linear in the first variables, while those on right Hilbert modules over C$^*$-algebras are linear in the second variables.} We then have natural isometries
\begin{equation}
\label{eq:spec-ftr-weak-tens-str}
F_2\colon F(U)\otimes_B F(V)\to F(U\circt V),\quad x \otimes y\mapsto
x_{13}y_{23}.
\end{equation}
The pair $(F,F_2)$ is called the \emph{spectral functor} defined by
the action $\alpha$. In general the isometries $F_2$ are not
unitary, so it is not a tensor functor but only a \emph{weak}, or
\emph{lax}, tensor functor.

Properties of spectral functors can be axiomatized and this way we
get a one-to-one correspondence between the isomorphism classes of
unital left $G$-C$^*$-algebras and natural unitary monoidal
isomorphism classes of weak unitary tensor
functors~\citelist{\cite{MR2358289}\cite{MR3426224}}. We will only
need to know how an action $\alpha\colon A\to C(G)\otimes A$ can be
reconstructed from the corresponding spectral functor $(F,F_2)$.

Consider the set $\Irr(G)$ of equivalence classes of irreducible
representations of $G$ and choose representatives $U_i\in
B(H_i)\otimes C(G)$ for $i\in\Irr(G)$. As a $G$-space, $\A$ can be
identified with
$$
\bigoplus_{i \in \Irr(G)} \bar H_i\otimes F(U_i),
$$
endowed with a left action of $G$ given by
$$
\alpha(\bar\xi\otimes x)=(U^c_i)^*_{21}(1\otimes\bar\xi\otimes x).
$$
For $\bar\xi\otimes x \in \bar H_i\otimes F(U_i)$ and
$\bar\zeta\otimes y\in \bar H_j\otimes F(U_j)$, their product is given
by
$$
(\bar\xi\otimes x)(\bar\zeta\otimes y)=\sum_k
\overline{w^*_k(\xi\otimes\zeta)}\otimes F(w^*_k)F_2(x \otimes y),
$$
where $w_k\in\Mor(U_{l_k},U_i\circt U_j)$ are isometries defining a
decomposition of $U_i\circt U_j$ into irreducible representations.
The involution is given by
$$
(\bar\xi\otimes x)^*=\overline{(\iota\otimes\xi^*)R_i(1)}\otimes
S^*_x F(\bar R_i)(1),
$$
where $R_i\colon\un\to U_{\bar i}\circt U_i$ and $\bar
R_i\colon\un\to U_i\circt U_{\bar i}$ form a solution of the
conjugate equations and $S_x\colon F(U_{\bar i})\to F(U_i\circt
U_{\bar i})$ is the map $y\mapsto F_2(x \otimes y)$.

\smallskip

A related categorical characterization of actions of $G$ is in terms
of module categories. Recall first that, given a C$^*$-tensor
category $\CC$, a \emph{right $\CC$-module category} is a
C$^*$-category $\DD$ together with a unitary bifunctor
$\otimes\colon \DD\times\CC\to\DD$ and natural unitary isomorphisms
$X\otimes \un\cong X$ and $X\otimes (U\otimes V)\cong (X\otimes
U)\otimes V$ satisfying certain compatibility conditions. For a
strict C$^*$-tensor category $\CC$, a module category is called
strict if these isomorphisms are just the identity morphisms, and
unless explicitly stated otherwise we will assume that we deal with
such module categories, which again does not lead to loss of generality. An equivalent way to define a right
$\CC$-module structure on a C$^*$-category~$\DD$ is by saying that
we have a unitary tensor functor from $\CC^{\otimes\op}$ into the
C$^*$-tensor category $\End(\DD)$ of unitary endofunctors of $\DD$,
which has uniformly bounded natural transformations as its morphisms.

A \emph{unitary $\CC$-module functor} between right $\CC$-module categories
$\DD$ and $\DD'$ is given by a pair $(F, \theta)$ consisting of a unitary functor
$F\colon\DD\to\DD'$ and a collection $\theta$ of natural unitary isomorphisms
$\theta_{X,U}\colon F(X)\otimes U\to F(X\otimes U)$
satisfying some compatibility conditions, which in the case of
strict module categories become $\theta_{X\otimes
U,V}(\theta_{X,U}\otimes\iota)=\theta_{X,U\otimes V}\colon
F(X)\otimes U\otimes V\to F(X\otimes U\otimes V)$. We denote by
$\End_\CC(\DD)$ the C$^*$-tensor category of unitary
$\CC$-module endofunctors of $\DD$.

Returning to an action $\alpha\colon A\to C(G)\otimes A$, consider
the category $\DD_A$ of $G$-equivariant finitely generated right
Hilbert $A$-modules. We will sometimes denote this category more
suggestively by $\rmodcatin{A}{G}$. Thus, the objects of $\DD_A$ are
right Hilbert $A$-modules $X$ equipped with isometries
$\delta_X\colon X\to C(G)\otimes X$, where we consider $C(G)\otimes
X$ as a right Hilbert $(C(G)\otimes A)$-module, satisfying the
following properties: $\delta_X(X)(C(G)\otimes1)$ is dense in
$C(G)\otimes X$,
$(\Delta\otimes\iota)\delta_X=(\iota\otimes\delta_X)\delta_X$,
$\delta_X(\xi a)=\delta_X(\xi)\alpha(a)$, and
$\langle\delta_X(x),\delta_X(y)\rangle_{C(G) \otimes
A}=\alpha(\langle x,y\rangle_A)$. For any such module
$(X,\delta_X)$, we denote by ${\mathcal X}\subset X$ the dense
subspace spanned by the vectors $x$ such that~$\delta_X(x)$ lies in
the algebraic tensor product of $\C[G]$ and $X$, or more concretely,
the subspace spanned by the vectors $(h(\cdot\,
a)\otimes\iota)\delta_X(x)$, where $a\in\C[G]$ and $x\in X$. Then
$\mathcal X$ is a left $\C[G]$-comodule and a right $\A$-module.

The category $\DD_A$ is a right $(\Rep G)$-module category, the
effect of the action of $U\in\Rep G$ on $X\in\DD_A$ is the
equivariant Hilbert module $H_U\otimes X$. Note once again that
since we consider a left action of $G$ on $A$, we view $H_U$ as a
left comodule. This indeed gives us a right action of $\Rep G$,
since the tensor product of left comodules $H_U$ corresponds to the
opposite tensor product in $\Rep G$. The category $\DD_A$ has a
distinguished object represented by the algebra $A$ itself.

This way we get a one-to-one correspondence between the isomorphism
classes of left $G$-C$^*$-algebras and the unitary isomorphism
classes of pairs $(\DD,X)$, where $\DD$ is a right $(\Rep G)$-module
category and~$X$ is a generating object in $\DD$, meaning that any
other object is a subobject of $X\otimes U$ for some $U\in\Rep
G$~\citelist{\cite{MR3121622}\cite{MR3426224}}. If we chose another
generating object, then we get a $G$-equivariantly Morita equivalent
C$^*$-algebra. Therefore we also get a one-to-one correspondence
between the $G$-equivariant Morita equivalence classes of left
$G$-C$^*$-algebras and the unitary equivalence classes of singly
generated right $(\Rep G)$-module categories. For finite quantum
groups and their actions on finite dimensional algebras, which can
then be considered as algebra objects in $\Rep G$, similar results
were already obtained by Ostrik~\cite{MR1976459}.

The relation between the above two categorical descriptions can be
described as follows. Assume we have a pair $(\DD,X)$ as above. Then
we can consider the unital C$^*$-algebra $B=\DD(X)$ and define a
weak unitary tensor functor $\Rep G\to\Corr(B)$ by letting
$$
F(U)=\DD(X,X\otimes U),
$$
with the $B$-valued inner product $\langle S,T\rangle_B=S^*T$, and the tensor structure
$$
F_2\colon F(U)\otimes_B F(V)\to F(U\circt V),\ \ S\otimes T\mapsto
(S\otimes\iota)T.
$$
Thus, for example, the formula for involution on $\A=\bigoplus_i\bar
H_i\otimes\DD(X,X\otimes U_i)$ becomes
$$
(\bar{\xi} \otimes T)^* = \overline{(\iota\otimes\xi^*)R_i(1)} \otimes (T^* \otimes \iota_{\bar{\imath}}) (\iota_X \otimes \bar{R}_i)
$$
for $\xi \in H_i$ and $T \in \DD(X, X\otimes U_i)$.

\smallskip

Of course, everything above makes sense also for right actions
$\alpha\colon A\to A\otimes C(G)$ and left module categories.
Briefly, given such an action and letting $B=A^G$, the corresponding
spectral functor is defined by
\begin{align*}
F&\colon (\Rep G)^{\otimes\op}\to \Corr(B),\quad F(U) =(H_U\otimes A)^G,\\
F_2&\colon F(U)\otimes_{B}F(V)\to F(V\circt U),\quad X\otimes Y \mapsto
X_{23}Y_{13}.
\end{align*}
The dense subalgebra $\A\subset A$ is reconstructed from $(F,F_2)$ by the same formula
as before, $\A=\bigoplus_i\bar H_i\otimes F(U_i)$, endowed with the right action of $G$ given by
$$
\alpha(\bar\xi\otimes x)=(U_i^c)_{13}(\bar\xi\otimes x\otimes 1).
$$
The product is defined similarly to the case of left actions. The
involution is given by
$$
(\bar\xi\otimes x)^*=\overline{(\xi^*\otimes\iota)\bar
R_i(1)}\otimes S^*_x F(R_i)(1)
$$
for $\bar\xi\otimes x \in \bar{H}_i \otimes F(U_i)$, where
$S_x(y)=F_2(x \otimes y)$.

The left $(\Rep G)$-module category $\DD_A$ is defined in the same
way as before, but equivariant right Hilbert $A$-modules are now
right $C(G)$-comodules. The spectral functor and the pair
$(\DD_A,X)$, where $X=A\in\DD_A$, are related by
$F(U)=\DD_A(X,U\otimes X)$.

\begin{remark}
In \cite{MR3121622}, a right $G$-C$^*$-algebra corresponding to a
left module category $\cD$ and $X \in \cD$ was constructed as the
completion of $\tilde{\A} = \bigoplus_i \cD(U_i\otimes  X, X) \otimes H_i$,
where $H_i$ has the coaction $\xi \mapsto U_i (\xi \otimes 1)$. This
approach is of course equivalent to the one above, with the
isomorphism $\tilde{\A} \to \A$ given by
$$
\cD(\bar{U}_i \otimes X, X) \otimes \bar{H}_i \to \bar{H}_i \otimes \cD(X,
U_i \otimes X), \quad S \otimes \bar{\xi} \mapsto (\iota \otimes \xi^*)
R_i(1) \otimes (\iota_i \otimes S) (\bar{R}_i \otimes \iota_X).
$$
In particular, when the weak tensor functor $F$ is actually the
fiber functor of $G$, $\tilde{A}$ can be identified with $\C[G]$ on
the nose, while the above map gives a right $G$-C$^*$-algebra
isomorphism $\C[G] \cong \A$.
\end{remark}

\begin{remark}\label{rem:left-right}
Our correspondence between left actions and right module categories
instead of left ones is more of a convention than a necessity. Given
any right $(\Rep G)$-module category $\DD$ we can reverse the
directions of arrows in $\DD$ to get a category $\DD^\op$, and then
define a left action of $\Rep G$ on $\DD^\op$ using a contravariant
functor $U\mapsto \bar U$. At the level of C$^*$-algebras this
corresponds to passing from a left action $\alpha\colon A\to
C(G)\otimes A$ to the right action $a\mapsto (\iota\otimes
R)(\alpha(a)_{21})$ on $A^\op$, where $R$ is the unitary antipode
on~$C(G)$. Concretely, the anti-isomorphism of the algebras
corresponding to $(\DD,X)$ and $(\DD^\opos,X^\opos)$ is given by
\begin{equation*}
\begin{split}
\bar{H}_i \otimes \cD(X, X \otimes U_i) &\to \bar{H}_i \otimes \cD^\opos(X^\opos, U_i \otimes X^\opos) = \bar{H}_i \otimes \cD(X \otimes \bar{U}_i, X),\\
\bar{\xi} \otimes S &\mapsto (\iota \otimes \xi^*) R_i(1) \otimes (\iota_X \otimes \bar{R}_i^*)(S \otimes \iota_{\bar{\imath}}).
\end{split}
\end{equation*}
\end{remark}

\subsection{Free actions}\label{sec:free}

A left action $\alpha\colon A\to C(G) \otimes A$ of a compact quantum
group on a unital C$^*$-algebra is called
\emph{free}~\cite{MR1760277}, if $(1 \otimes A) \alpha(A)$ is dense in
$C(G) \otimes A$. By now there are many equivalent characterizations
of freeness~\citelist{\cite{MR3141721}\cite{MR3665403}}. In
particular, freeness is equivalent to any of the following
conditions:

\begin{itemize}
\item the regular subalgebra $\A\subset A$ is a
\emph{Hopf--Galois extension} of $B=A^G$, that is, the \emph{Galois~map}
$$
\Gamma\colon \A \otimes_B \A \to \C[G] \otimes \A, \quad a \otimes b \mapsto \alpha(a)(1\otimes b),
$$
is bijective;
\item for any $U \in \Rep G$, the \emph{localized Galois map}
$$
\Gamma_U\colon A_U \otimes_B A \to \C[G]_U \otimes A, \quad a \otimes b \mapsto \alpha(a) (1 \otimes b),
$$
is a unitary map of right Hilbert $A$-modules, where $\C[G]_U$ is
the span of matrix coefficients of $U$, and $A_U = \{a\in A
\mid\alpha(a)\in \C[G]_U \otimes A\}$ is the spectral subspace of
$A$ corresponding to $U$; here $A_U$ has the structure of a right
Hilbert $B$-module induced by the unique $G$-invariant conditional
expectation $E\colon A \to B$, and $\C[G]_U$ is equipped with the
scalar product using the Haar state;
\item the spectral functor $\Rep G \to \Corr(B)$ is
monoidal, that is, the natural isometries
$$
(H_U\otimes A)^G\otimes_B (H_V\otimes A)^G\to (H_{U \circt V}\otimes A)^G
$$
given by~\eqref{eq:spec-ftr-weak-tens-str} are unitary.
\end{itemize}
Note that in the purely algebraic setting the equivalence of these
conditions was established earlier by Ulbrich in the ergodic
case~\cite{MR915993} and by
Schauenburg~\cite{MR2075600}*{Section~2.5} in general.

Freeness for right actions can be characterized similarly, this time
the Galois map being given by
$$
\A \otimes_B \A \to \A \otimes
\C[G], \quad a \otimes b \mapsto \alpha(a) (b \otimes 1).
$$

Yet another characterization of freeness for left actions is as
follows, which in the purely algebraic setting is due to
Schneider~\cite{MR1098988}. Needless to say, there is also a similar
characterization for right actions.

\begin{proposition} \label{prop:imprim}
An action $\alpha\colon A\to C(G) \otimes A$ of a compact
quantum group on a unital C$^*$-algebra~$A$ is free if and only if, for any $G$-equivariant right Hilbert $A$-module $X$, the map $X^G\otimes_{A^G}A\to X$, $x\otimes a\mapsto x a$, is a unitary isomorphism.
\end{proposition}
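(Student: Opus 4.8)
My plan is to separate the statement into a part that holds unconditionally and a part that genuinely detects freeness. Write $B=A^G$ and let $\mu\colon X^G\otimes_B A\to X$, $x\otimes a\mapsto xa$, be the map in question. First I would record that $X^G$ is a right Hilbert $B$-module: for $x,y\in X^G$ we have $\delta_X(x)=1\otimes x$, so equivariance of the inner product gives $\alpha(\langle x,y\rangle_A)=1\otimes\langle x,y\rangle_A$, whence $\langle x,y\rangle_A\in B$. Viewing $A$ as a $B$-$A$-correspondence by left multiplication, a short computation in the interior tensor product then yields $\langle\mu(x\otimes a),\mu(y\otimes b)\rangle_A=a^*\langle x,y\rangle_B b=\langle x\otimes a,y\otimes b\rangle$, so that $\mu$ is \emph{isometric for every equivariant $X$}. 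Since an isometry of Hilbert modules has closed range, the entire proposition reduces to the claim that freeness is equivalent to density of $X^G\cdot A$ in $X$ for all $X$.

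For the implication that the module condition forces freeness, I would feed the hypothesis the specific modules $X=H_U\otimes A$, $U\in\Rep G$, for which $X^G=F(U)$ is the value of the spectral functor. The key point to verify is that, under the identifications $A_U\cong\bar H_U\otimes F(U)$ coming from the decomposition $\A=\bigoplus_i\bar H_i\otimes F(U_i)$ and $\C[G]_U\cong\bar H_U\otimes H_U$, the map $\mu$ for this $X$ is exactly the localized Galois map $\Gamma_U\colon A_U\otimes_B A\to\C[G]_U\otimes A$ tensored with the identity on the spectator factor $\bar H_U$. Granting this, unitarity of $\mu$ for all the $H_U\otimes A$ makes every $\Gamma_U$ unitary, and by the characterization of freeness through localized Galois maps recalled above, the action is free; it suffices to run this for irreducible $U$.

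For the converse I would assume freeness and argue on the dense regular submodule $\mathcal{X}\subset X$, which is simultaneously a left $\C[G]$-comodule and a right $\A$-module with $\delta_X(xa)=\delta_X(x)\alpha(a)$, i.e.\ a relative $(\C[G],\A)$-Hopf module, and whose coinvariants coincide with $X^G$. Freeness is precisely the Hopf--Galois property of $B\subset\A$, and since $\C[G]$ is cosemisimple the Haar state furnishes a total integral, so $\A$ is faithfully flat over $B$; Schneider's structure theorem for Hopf--Galois extensions then gives that the multiplication map $X^G\otimes_B\A\to\mathcal{X}$ is bijective, whence $X^G\cdot A$ is dense in $X$ and $\mu$ is unitary. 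The hard part, and the main obstacle, is exactly this surjectivity: it cannot be reduced to the finitely generated generators $H_U\otimes A$ of $\DD_A$, because a general $X$---and $X^G$ as a $B$-module---need not be finitely generated, so density must be obtained uniformly from the translation map $\tau=\Gamma^{-1}(\,\cdot\otimes 1)$ inverting the Galois map, together with a check that the algebraic structure theorem survives passage to the Hilbert-module completion.
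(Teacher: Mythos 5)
Your argument is correct and follows essentially the same route as the paper: both directions reduce, via the isometry of $\mu$, to density of its image; the converse is detected on the modules $\C[G]_{\bar U}\otimes A$ (equivalently your $H_U\otimes A$, since $X^G\cong A_U$ there and $\mu$ becomes the localized Galois map up to the spectator factor); and the forward direction is exactly Schneider's theorem, which the paper proves directly by writing down the inverse $\nu$ of $\mu$ on regular vectors using the translation map $\tau=\Gamma^{-1}(\,\cdot\otimes 1)$ and using cosemisimplicity to identify $(\cX\otimes_B\cA)^G$ with $X^G\otimes_B\cA$. The closing worry about the argument ``surviving passage to the Hilbert-module completion'' is vacuous: once $\mu$ is isometric and its range contains the dense subspace $\cX=X^G\cdot\cA$, it is automatically unitary, so nothing beyond the algebraic bijection $X^G\otimes_B\cA\to\cX$ is needed and no uniformity issue arises for non-finitely-generated $X$.
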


\bp We follow the idea of~\cite{MR1098988}, but there are several
simplifications due to the cosemisimplicity of~$\C[G]$. Since the
map in the formulation is isometric, the only question is when it
has dense image.  Let us denote the subspace of regular vectors in
$X$ by $\cX$, and look at the product map $\mu\colon X^G \otimes_B
\cA \to \cX$, where $B=A^G$.

First suppose that the action is free. Using the inverse of the Galois map, we can consider the map
$$
\nu\colon \cX \to \cX \otimes_B \cA, \quad x \mapsto x_{(1)}
\Gamma^{-1}(S^{-1}(x_{(0)}) \otimes 1)_1 \otimes
\Gamma^{-1}(S^{-1}(x_{(0)}) \otimes 1)_2.
$$
A standard computation shows that the image of this map is in $(\cX
\otimes_B \cA)^G$, where $G$ acts only on the first factor of
$\cX\otimes_B\cA$. Since the $G$-isotypic decomposition $\cX =
\bigoplus_{i \in \Irr(G)} X_i$ is compatible with the action of $B$,
we have $(\cX \otimes_B \cA)^G = X^G \otimes_B \cA$. Then, using
that the product map $\cA \otimes_B \cA \to \cA$ equals
$(\varepsilon \otimes \iota)\Gamma$, it is easy to check that $\nu$,
considered as a map $\cX\to X^G\otimes_B\A$, is the inverse
of~$\mu$.

Conversely, assume the map in the formulation is unitary for any
$X$, or equivalently, the map $\mu\colon X^G\otimes_B\A\to \cX$ is
an isomorphism. Take $U\in \Rep G$. Consider the equivariant right
Hilbert $A$-module $X=\C[G]_{\bar U}\otimes A$, where the inner
product on $\C[G]_{\bar U}$ is defined by the Haar state. Then
$$
X^G=\{S(a_{(0)})\otimes a_{(1)}\mid a\in A_U\},
$$
so $X^G\cong A_U$ as a right $B$-module. By assumption, the map
$$
X^G\otimes_{B} \A\to{\mathcal X}=\C[G]_{\bar U}\otimes \A,\ \
(S(a_{(0)})\otimes a_{(1)})\otimes b\mapsto S(a_{(0)})\otimes
a_{(1)}b,
$$
is an isomorphism. But this shows that the map $A_U\otimes_{B} \A\to
\C[G]_U\otimes\A$, $a\otimes b\mapsto a_{(0)}\otimes a_{(1)}b$, is
an isomorphism. Hence the localized Galois map $\Gamma_U$ is an
isomorphism. \ep

If an action is free, then it follows from
\cite{MR3141721}*{Corollary~4.2(2)} that, for any $G$-equivariant
finitely generated right Hilbert $A$-module $X$, the module $X^G$ is
finitely generated over $A^G$. Therefore the `only if' part of the
above proposition implies that the functor $X\mapsto X^G$ defines an
equivalence of the categories $\Mod_G\mhyph A$ and $\Mod\mhyph A^G$.
Without the freeness assumption this is not even well-defined as a
functor into the category of finitely generated modules. However, if
the fixed point algebra is finite dimensional, the functor is
well-defined and we get the following characterization of freeness.

\begin{proposition}
\label{prop:freeness} Let $\alpha\colon A\to C(G) \otimes A$ be an
action of a compact quantum group on a unital C$^*$-algebra~$A$.
Assume that $A^G$ is finite dimensional. Then the action is free if
and only if $Y^G\ne0$ for any nonzero $Y \in\cD_A$.
\end{proposition}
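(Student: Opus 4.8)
The plan is to deduce everything from the imprimitivity-type criterion of Proposition~\ref{prop:imprim}, which says that freeness is equivalent to the map $\mu_X\colon X^G\otimes_{B}A\to X$, $x\otimes a\mapsto xa$, being a unitary isomorphism for every $G$-equivariant right Hilbert $A$-module $X$, where $B=A^G$. Recall from the proof of that proposition that $\mu_X$ is always \emph{isometric}, and that for the converse direction it suffices to treat the finitely generated modules $X=\C[G]_{\bar U}\otimes A$ with $U\in\Rep G$, for which $\mu_X$ being onto is equivalent to the localized Galois map $\Gamma_U$ being unitary. Since all these $X$ lie in $\cD_A$, the whole statement reduces to understanding the image of $\mu_X$ for $X\in\cD_A$. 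The `only if' direction is then immediate: if the action is free, then $\mu_Y\colon Y^G\otimes_B A\to Y$ is a unitary isomorphism for every $Y\in\cD_A$ by Proposition~\ref{prop:imprim}, so $Y^G=0$ would force $Y\cong Y^G\otimes_B A=0$; contrapositively, $Y\neq0$ gives $Y^G\neq0$.

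For the `if' direction I would fix $X\in\cD_A$, set $X_0=\overline{X^G\cdot A}=\overline{\mu_X(X^G\otimes_B A)}$, a closed $G$-invariant submodule of $X$, and split it off. Granting that $X^G$ is finitely generated over $B$ (discussed below), the module $X^G\otimes_B A$ is a finitely generated Hilbert $A$-module; since $\mu_X$ is isometric, its image $X_0$ is a finitely generated, hence orthogonally complemented, submodule of $X$, and the orthogonal projection onto $X_0$ is $G$-equivariant by uniqueness together with the invariance of $X_0$ under $\delta_X$. Thus $Y:=X\ominus X_0$ is again an object of $\cD_A$. Now any $y\in Y^G$ lies in $X^G$ (the coaction on $Y$ is the restriction of $\delta_X$), hence in $X_0$ since $y=\mu_X(y\otimes 1)$, while $y\in Y=X_0^\perp$; therefore $Y^G=0$. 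By hypothesis $Y=0$, so $X_0=X$ and $\mu_X$ is a surjective isometry, i.e.\ a unitary isomorphism. Applying this to $X=\C[G]_{\bar U}\otimes A$ for all $U\in\Rep G$ shows that every $\Gamma_U$ is unitary, and freeness follows from Proposition~\ref{prop:imprim}.

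The step requiring real work, which I expect to be the main obstacle, is the finite generation of $X^G$ over $B=A^G$ for $X\in\cD_A$; this is precisely where the hypothesis that $A^G$ be finite dimensional enters, since over a finite dimensional C$^*$-algebra a Hilbert module is finitely generated exactly when it is finite dimensional. Without this hypothesis the claim genuinely fails: for instance $\Z/2$ acting on $C[0,1]$ by $t\mapsto1-t$ has odd part isomorphic, over $A^G\cong C[0,1/2]$, to the non-finitely-generated ideal of functions vanishing at $1/2$. Writing $X$ as an orthogonal direct summand of some $H_U\otimes A$ (possible because $A$ is a generating object of $\cD_A$ and C$^*$-categories are closed under subobjects) reduces the claim to the finite dimensionality of $F(U)=(H_U\otimes A)^G$.

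To obtain this finiteness I would pass to the GNS space $\mathcal H=L^2(A,\varphi)$ of the $G$-invariant faithful state $\varphi=\tau\circ E$, where $E\colon A\to B$ is the canonical conditional expectation and $\tau$ a faithful state on the finite dimensional algebra $B$; finite dimensionality of $F(U)$ amounts to a uniform bound on the multiplicities of the unitary corepresentation of $G$ on $\mathcal H$. For the trivial representation this multiplicity is simply $\dim_\C\mathcal H^G=\dim_\C\overline{B\Omega}=\dim_\C B$, which already exhibits the role of the finiteness assumption; for a general $U$ I expect finiteness to follow from a quantum-dimension estimate generalizing the ergodic case $B=\C$, where the multiplicity of $U$ is bounded by $\dim_q U$, after reducing to blocks $B\cong M_n$ by cutting with the minimal central projections of $B$. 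Making this bound work over a finite dimensional, possibly non-commutative, $B$ is the technical heart of the argument; once it is in hand, the structural part of the proof sketched above is entirely formal.
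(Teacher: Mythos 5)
Your structural skeleton is exactly the paper's: the ``only if'' direction is read off from Proposition~\ref{prop:imprim}, and for the converse one observes that the image $Y^G A$ of the isometry $\mu_Y$ is a complemented equivariant submodule of a suitable $Y=\C[G]_{\bar U}\otimes A$ whose orthogonal complement $Z$ satisfies $Z^G=0$, so the hypothesis forces $Z=0$ and hence surjectivity of every localized Galois map. The one step you leave unproven --- finite dimensionality of $(H_U\otimes A)^G$, equivalently finite generation of $Y^G$ over $B=A^G$ --- is a genuine gap, and it is precisely where the hypothesis $\dim A^G<\infty$ enters. Your proposed route (multiplicity bounds for the corepresentation on $L^2(A,\varphi)$, reducing to ergodic corners $eAe$ for minimal projections $e\in B$ and invoking the quantum-dimension bound there) is not wrong in spirit, but it imports the nontrivial multiplicity theorem for ergodic actions and still requires an extra argument to control the off-diagonal corners $eAf$; as written it is a plan, not a proof.

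The paper closes this gap with a much shorter argument that you should adopt: any faithful $G$-invariant state $\varphi$ on $B(H_U)$ satisfies $\varphi(x)\ge\lambda x$ for positive $x$ and some $\lambda>0$, so $\varphi\otimes\iota$ restricts to a conditional expectation $(B(H_U)\otimes A)^G\to A^G$ of finite probabilistic index, and a unital C$^*$-algebra admitting a finite-index expectation onto a finite dimensional subalgebra is itself finite dimensional. Since $(H_U\otimes A)^G\cong\cD_A(A,H_U\otimes A)$ is a corner of the endomorphism algebra $(B(H_{\un\oplus U})\otimes A)^G$ of $H_{\un\oplus U}\otimes A$, this gives the required finiteness at once (and shows that $\cD_A$ is semisimple with finite dimensional morphism spaces, which is how the paper phrases it). With that substitution your argument coincides with the paper's proof.
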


\begin{proof}
The `only if' direction follows from the previous proposition and
does not require finite dimensionality of $B=A^G$.

As for the converse, first, we claim that the finite dimensionality
assumption on $A^G$ implies that $\cD_A$ is semisimple. Indeed,
since any module in $\DD_A$ is a direct summand of $H_U\otimes A$
for some $U\in\Rep G$, it suffices to show that the endomorphism
algebra $(B(H_U)\otimes A)^G$ of $H_U\otimes A\in\DD_A$ is finite
dimensional. But this is true, since any faithful $G$-invariant
state $\varphi$ on $B(H_U)$ defines a conditional expectation
$\varphi\otimes\iota\colon (B(H_U)\otimes A)^G\to A^G$ of finite
probabilistic index.

Let $X$ be the object of $\cD_A$ represented by $A$ itself. Then the
space $\cD_A(X, Y) \cong Y^G$ is finite dimensional. In particular,
$Y^G$ is finitely generated over $B$ for any $Y \in \cD_A$. Assume
that the action is not free. Then by the proof of the previous
proposition, there exists $Y \in \cD_A$ of the form
$Y=\C[G]_U\otimes A$ such that the isometric map $Y^G\otimes_{B}
A\to Y$, $x\otimes a\mapsto x a$, is not surjective. Since $Y^G$ is
finitely generated over $B$, the module $Y^G\otimes_{B}A$ is
finitely generated over $A$, hence this map is a morphism in
$\cD_A$. Its image, the proper submodule $Y^G A\subset Y$, has a
nonzero orthogonal complement~$Z$. Clearly, $Z^G=0$.
\end{proof}

\begin{remark}
An equivalent way of formulating the above proposition is as
follows: if a $(\Rep G)$-module category $\cD$ is semisimple, then
the action of $G$ on the C$^*$-algebra corresponding to a generating
object $X \in \cD$ is free if and only if every simple object of
$\cD$ is a subobject of $X$.
\end{remark}

The following observation is useful for checking freeness in
concrete examples.

\begin{proposition} \label{prop:freesubalg}
Assume $\alpha\colon B\to C(G) \otimes B$ is an action of a compact
quantum group $G$ on a unital C$^*$-algebra $B$, and $A\subset B$ is an
invariant C$^*$-subalgebra containing the unit of $B$ such that the
action of $G$ on $A$ is free. Then the action of $G$ on $B$ is also
free.
\end{proposition}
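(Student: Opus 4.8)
The plan is to argue directly from the density characterization of freeness, exploiting that the unit of $B$ already lies in $A$. Write $J=\overline{(1\otimes B)\alpha(B)}$ for the closed linear span that we must show equals $C(G)\otimes B$; by definition freeness of the $G$-action on $B$ is exactly the statement $J=C(G)\otimes B$. Since the inclusion $J\subseteq C(G)\otimes B$ is automatic, the whole task reduces to proving $C(G)\otimes B\subseteq J$.

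First I would record what the two hypotheses supply. Applying the defining density condition of an action to $B$ itself gives, for free, that $\overline{(C(G)\otimes1)\alpha(B)}=C(G)\otimes B$; so it is enough to establish the inclusion $(C(G)\otimes1)\alpha(B)\subseteq J$. On the other hand, freeness of the $G$-action on the invariant subalgebra $A$ says precisely that $\overline{(1\otimes A)\alpha(A)}=C(G)\otimes A$, where $\alpha(A)\subseteq C(G)\otimes A$ because $A$ is invariant.

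The key observation, and the only point where invariance together with $1_B\in A$ is used, is then the following. Because $1_B\in A$, we have $C(G)\otimes1\subseteq C(G)\otimes A=\overline{(1\otimes A)\alpha(A)}$. I would approximate any element $c\otimes1$ by finite sums $\sum_k(1\otimes a_k)\alpha(a_k')$ with $a_k,a_k'\in A$, multiply on the right by $\alpha(b)$ for $b\in B$, and use that $\alpha$ is a homomorphism: $(c\otimes1)\alpha(b)$ is then a limit of the sums $\sum_k(1\otimes a_k)\alpha(a_k'b)$. Since $a_k\in A\subseteq B$ and $a_k'b\in B$, each such sum lies in $(1\otimes B)\alpha(B)$, whence $(c\otimes1)\alpha(b)\in J$. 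This gives $(C(G)\otimes1)\alpha(B)\subseteq J$, and taking closures yields $C(G)\otimes B\subseteq J$, as required.

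I do not expect a genuine obstacle here: the proof is a short closure manipulation that uses nothing about $G$ beyond the defining density condition of an action and the multiplicativity of $\alpha$. The only things to be careful with are bookkeeping of closures -- that the closure of $(1\otimes A)\alpha(A)$ taken inside $C(G)\otimes B$ still equals $C(G)\otimes A$, which holds because $C(G)\otimes A$ is closed in $C(G)\otimes B$ -- and the continuity of right multiplication by the fixed element $\alpha(b)$, which is what lets the limits be passed through that product.
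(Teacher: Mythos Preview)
Your proof is correct and follows essentially the same idea as the paper's: both arguments observe that freeness of the action on $A$ already puts $C(G)\otimes 1$ (respectively $\C[G]\otimes 1$) in the relevant image, and then a single application of multiplicativity of $\alpha$ upgrades this to all of $C(G)\otimes B$. The only difference is cosmetic: the paper works at the algebraic level with the Galois map $\BB\otimes_{B^G}\BB\to\C[G]\otimes\BB$, noting it is always injective and checking surjectivity, while you stay at the C$^*$-level and manipulate closures directly from the density definition; your version has the minor advantage of not needing to invoke the equivalence between the density and Hopf--Galois characterizations of freeness.
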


\bp Since the Galois map $\BB\otimes_{B^G}\BB\to\C[G] \otimes \BB$,
$b\otimes c\mapsto\alpha(b)(1\otimes c)$, is always
injective, we only have to check surjectivity. By the freeness of
the action on $A$ the image of this map contains $\C[G] \otimes 1$,
hence it also contains $\C[G] \otimes \BB$. \ep

\subsection{Conventions}
\label{sec:conv}

We often fix representatives $(U_i)_i$ of isomorphism classes of
simple objects in a rigid C$^*$-tensor category, and then use the
subscript $i$ instead of $U_i$, so that we write $d_i$, $(R_i,
\bar{R}_i)$ instead of $d(U_i)$, $(R_{U_i}, \bar{R}_{U_i})$, etc.

In order to simplify various expressions, we often omit the symbols
$\otimes$ and $\circt$ for tensor products of objects in tensor
categories and module categories, as opposed to this preliminary
section. We still write $\otimes$ for tensor products of morphisms
and vector spaces.

When $X$ and $Y$ are objects in a rigid C$^*$-tensor category (or in
a rigid C$^*$-$2$-category) and standard solutions for the corresponding
conjugate equations are fixed, we take $\left((\iota_{\bar{Y}}
\otimes R_X \otimes \iota_Y) R_X, (\iota_X \otimes \bar{R}_Y \otimes
\iota_{\bar{X}}) \bar{R}_X \right)$ as a standard solution for $X
Y(=X\otimes Y)$. We also normalize the choice of standard solutions
in $\Rep G$ as in~\eqref{eq:std-sol-from-Woronowicz-char}. Thus, for
$\xi \in H_i$, we have
\begin{align*}
(\xi^*\otimes\iota)\bar R_i(1) &= \overline{\rho^{1/2} \xi}, &
(\iota \otimes \xi^*) R_i(1) &= \overline{\rho^{-1/2} \xi}.
\end{align*}

Recall once again that all compact quantum groups in this paper are
assumed to be in the reduced form.

When we write formulas for linear maps on subspaces of vector
spaces, such as $(H_U\otimes A)^G\subset H_U\otimes A$, we often
consider only elementary tensors. By this we do not mean that the
subspaces are spanned by such tensors, but that our formulas have
obvious extensions to all the required tensors.

We use the Einstein summation convention, that is, if an index
occurs once in an upper and once in a lower position in an
expression, then we have a sum over this index.

\section{Frobenius algebras and categories of modules}\label{sec:Frobenius}

In this section we collect a number of results on algebra objects in
C$^*$-tensor categories and the corresponding categories of modules.

\subsection{Frobenius algebras in tensor categories}

Recall that a \emph{Frobenius algebra} over $\C$ is a finite dimensional algebra $A$ together with a linear functional $\varphi$ such that the pairing $A\times A\to\C$, $(a,b)\mapsto\varphi(ab)$ is nondegenerate. There are a number of other conditions equivalent to nondegeneracy of the pairing, see, e.g.,~\cite{MR2037238}. One of them is that the vector space $A$ admits a (necessarily unique) coalgebra structure with counit $\varphi$ and coproduct $\Delta\colon A\to A\otimes A$ such that $\Delta$ is an $A$-bimodule map. Explicitly, the coproduct is defined by
\begin{equation} \label{eq:coprod}
\Delta(y)= y x^i\otimes x_i \Bigl(=\sum_i y x^i \otimes x_i\Bigr),
\end{equation}
where $(x_i)_i$ is a basis in $A$ and $(x^i)_i$ is the dual basis,
so that $\varphi(x_ix^j)=\delta_{ij}$.

By a \emph{Frobenius C$^*$-algebra} we mean a finite dimensional C$^*$-algebra $A$ together with a faithful positive linear functional~$\varphi$. Define a scalar product on $A$ by $(x,y)=\varphi(y^*x)$. Then the coproduct $\Delta$ defined by~\eqref{eq:coprod} coincides with the adjoint $m^*$ of the product map $m\colon A\otimes A\to A$, while $\varphi$ equals the adjoint of the map $v\colon\C\to A$, $v(1)=1$. This justifies the following definition.

\begin{definition}[cf.~\citelist{\cite{MR1966524}\cite{MR3308880}}]
An algebra object $(A,m,v)$, with product $m\colon A\otimes A\to A$ and unit $v\colon\un\to A$, in a C$^*$-tensor category $\CC$ is called a \emph{C$^*$-Frobenius algebra} if $m^*\colon A\to A\otimes A$ is an $A$-bimodule morphism, that~is,
\begin{equation*} 
(m\otimes\iota)(\iota\otimes m^*)=m^*m=(\iota\otimes m)(m^*\otimes\iota).
\end{equation*}
\end{definition}

Since the unit $v$ is uniquely determined, we will often write an algebra in $\CC$ as a pair $(A,m)$.

\smallskip

In a similar way, given a C$^*$-Frobenius algebra $(A,m)$, we say that a left $A$-module $(X,m_X\colon A\otimes X\to X)$ in $\CC$ is \emph{unitary} if $m_X^*\colon X\to A\otimes X$ is an $A$-module morphism:
\begin{equation} \label{eq:unitarity}
(m\otimes\iota)(\iota\otimes m_X^*)=m_X^*m_X.
\end{equation}

By the above discussion any Frobenius C$^*$-algebra is a C$^*$-Frobenius algebra in $\Hilb_f$. It is known that the converse is also true. More precisely, we have the following.

\begin{lemma}\label{lem:FrobeniusVScstar}
Let $(A,m,v)$ be a C$^*$-Frobenius algebra in $\Hilb_f$. Then the algebra $A$ admits a unique involution such that it becomes a C$^*$-algebra and such that for the linear functional $\varphi=v^*$ on it we have $(x,y)=\varphi(y^*x)$ for all $x,y\in A$. Also, a left $A$-module $X$ in $\Hilb_f$ is unitary if and only if the representation of $A$ on the Hilbert space $X$ is $*$-preserving.
\end{lemma}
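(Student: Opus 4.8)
The plan is to realise $A$ as a concrete $*$-algebra of operators on the Hilbert space $A$ via its left regular representation, and read off the involution, the C$^*$-norm, and the Frobenius functional from there. For $a\in A$ write $L_a\colon A\to A$, $L_a=m(a\otimes\cdot)$, for left multiplication, and let $R_a$ denote right multiplication. Since $m$ is associative with unit $v$, the map $L\colon A\to B(A)$, $a\mapsto L_a$, is a unital algebra homomorphism, and it is injective because $L_a v(1)=a$. Equip $B(A)$ with the operator norm of the given inner product on $A$. Everything will follow once I show that the Hilbert-space adjoint $L_a^*$ is again of the form $L_b$.

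The key step, and the main obstacle, is exactly this last point, and it is where the Frobenius (bimodule) hypothesis is indispensable. Writing $\iota_a\colon\un\to A$ for the insertion $1\mapsto a$, so that $\iota_a^*(b)=(b,a)$, we have $L_a=m(\iota_a\otimes\iota_A)$ and hence $L_a^*=(\iota_a^*\otimes\iota_A)m^*$. Applying the Frobenius relation $m^*m=(\iota\otimes m)(m^*\otimes\iota)$ to $v(1)\otimes\eta$ gives $m^*(\eta)=(\iota\otimes m)(m^*v\otimes\iota)(\eta)$; that is, $m^*(\eta)$ is obtained from the fixed element $m^*v(1)$ by feeding $\eta$ into the second leg by right multiplication. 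Substituting this into $L_a^*=(\iota_a^*\otimes\iota_A)m^*$ shows that $L_a^*=L_{a^*}$ with $a^*:=(\iota_a^*\otimes\iota_A)m^*v(1)$, an element depending on $a$ but not on the argument. Without the bimodule property of $m^*$ this pointwise identity would fail, so this is the crux; the remainder is formal.

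With $L_a^*=L_{a^*}$ in hand, $L(A)$ is a unital $*$-subalgebra of $B(A)$, finite dimensional hence norm closed, so it is a C$^*$-algebra; transporting its structure along the injection $L$ makes $A$ a C$^*$-algebra with involution $a\mapsto a^*$. The two asserted identities are then one-line computations: $\varphi(y^*x)=(L_{y^*}x,v(1))=(x,L_y v(1))=(x,y)$, and $\varphi(a^*a)=(L_a^*L_a v(1),v(1))=\|L_a v(1)\|^2=\|a\|^2$, which shows $\varphi=v^*$ is faithful and positive. Uniqueness of the involution follows because $(x,y)=\varphi(y^*x)$ together with nondegeneracy of the Frobenius pairing $(z,x)\mapsto\varphi(zx)$ determines $y^*$ uniquely.

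For the statement about modules I would run the same argument for the representation $\pi\colon A\to B(X)$, $\pi(a)=m_X(a\otimes\cdot)$, which yields $\pi(a)^*=(\iota_a^*\otimes\iota_X)m_X^*$. I will show that unitarity of $X$, $*$-preservation of $\pi$, and the explicit formula $m_X^*=(\iota\otimes m_X)(m^*v\otimes\iota_X)$ are all equivalent. For the last two: pairing the first leg of this formula against an arbitrary $\iota_a^*$ converts it precisely into $\pi(a)^*=\pi(a^*)$, using $a^*=(\iota_a^*\otimes\iota_A)m^*v(1)$ from the first part, and since the functionals $\iota_a^*$ span $A^*$ this is an equivalence. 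That the formula implies unitarity is a direct substitution using $m^*(a)=(L_a\otimes\iota)m^*v(1)=(\iota\otimes R_a)m^*v(1)$, the two halves of the Frobenius condition. For the remaining implication, unitarity gives $m_X^*(a\xi)=(L_a\otimes\iota)m_X^*(\xi)$; combining this with the counit identity $(\varphi\otimes\iota_X)m_X^*=\iota_X$ (valid for any module, being the adjoint of the unit axiom $m_X(v\otimes\iota_X)=\iota_X$) and pairing the first leg against $\varphi(a\,\cdot\,)=\iota_{a^*}^*$ recovers the formula. As in the first part, the only subtlety is extracting a pointwise operator identity from a relation that a priori only controls how $m_X^*$ intertwines the actions, and this is handled by the nondegeneracy of the inner product.
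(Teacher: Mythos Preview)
Your proof is correct and follows essentially the same route as the paper: both establish the involution through the left regular representation (you compute $L_a^*$ via the Frobenius identity and use it to \emph{define} $a^*$, while the paper defines $a^*$ via nondegeneracy of the pairing $(x,y)\mapsto\varphi(xy)$ and then verifies $L_a^*=L_{a^*}$), and both handle modules by passing through the intermediate condition $m_X=(v^*m\otimes\iota)(\iota\otimes m_X^*)$, whose adjoint is exactly your explicit formula for $m_X^*$. The only stylistic difference is that the paper records the equivalence of this condition with unitarity by a short diagrammatic computation valid in any C$^*$-tensor category, whereas you carry out the equivalent manipulations element-wise in $\Hilb_f$.
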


\bp Since $m^*$ is a coproduct with counit $\varphi$, the pair
$(A,\varphi)$ is a Frobenius algebra, so the pairing defined by
$\varphi$ is nondegenerate. Hence we can define an anti-linear
operation $a\mapsto a^*$ on $A$ such that $(x,y)=\varphi(y^*x)$ for
all $x,y\in A$. For $a,b,c\in A$ we have
$$
(c,ab)=(m^*(c),a\otimes b)=((a^*\otimes1)m^*(c),1\otimes b)
=(m^*(a^*c),1\otimes b)=(a^*c,b).
$$
This shows that the left regular representation of $A$ is $*$-preserving, so the $*$-operation is an involution and~$A$ is a C$^*$-algebra.

\smallskip

Next, consider a left $A$-module $X\in\Hilb_f$. By definition, the
representation of $A$ on $X$ is $*$-preserving if, for all $a\in A$
and $x,y\in X$, we have $(ax,y)=(x,a^*y)$. The right hand side can
be written as
$$
(x,m_X(a^*\otimes y))= \left( (m\otimes\iota)(a\otimes m_X^*(x)),1\otimes y
\right)= \left( (v^*m\otimes\iota)(\iota\otimes m_X^*)(a\otimes x), y \right),
$$
so the representation is $*$-preserving if and only if
\begin{equation} \label{eq:unitarity2}
m_X=(v^*m\otimes\iota)(\iota\otimes m_X^*).
\end{equation}
This condition is equivalent to \eqref{eq:unitarity} in any C$^*$-tensor category. Indeed, identity \eqref{eq:unitarity2} follows from \eqref{eq:unitarity} by multiplying the latter by $v^*\otimes\iota$ on the left. Conversely, starting with \eqref{eq:unitarity2} and using that $m=(v^*m\otimes\iota)(\iota\otimes m^*)$ by the Frobenius condition, we compute:
\begin{equation*}
\begin{split}
(m\otimes\iota)(\iota\otimes m_X^*)&=(v^*m\otimes\iota\otimes\iota)(\iota\otimes m^*\otimes\iota)(\iota\otimes m_X^*)\\
&=(v^*m\otimes\iota\otimes\iota)(\iota\otimes\iota\otimes m_X^*)(\iota\otimes m_X^*)\\
&= m_X^*(v^*m\otimes\iota)(\iota\otimes m_X^*)=m_X^*m_X.
\end{split}
\end{equation*}
This completes the proof of the lemma.
\ep

For a C$^*$-Frobenius algebra $A$ in a C$^*$-tensor category $\CC$,
we denote by $\lmodcatin{A}{\CC}$, or simply by $\lmodcat{A}$, the
category of left unitary $A$-modules in $\CC$. It is easy to check
that $\lmodcat{A}$ is a C$^*$-category~\cite{MR3509018}*{p.~418}
using condition \eqref{eq:unitarity2} and arguments similar to the
proof of the above proposition, where we in effect checked that the
fact $m_X$ is a morphism in $\lmodcat{A}$ implies that $m_X^*$ is a
morphism in $\lmodcat{A}$ as well. In a similar way we can introduce
C$^*$-categories $\rmodcatin{A}{\cC}$ of unitary right $A$-modules
and $\bimodcatin{A}{\CC}$ of unitary $A$-bimodules in~$\CC$.

\smallskip

For abstract C$^*$-tensor categories it is natural to consider unitary isomorphisms of C$^*$-Frobenius algebras. But for $\Hilb_f$ there is a larger natural class of isomorphisms.

\begin{lemma}
Let $(A,\varphi_A)$ and $(B,\varphi_B)$ be Frobenius C$^*$-algebras. Assume that $T\colon A\to B$ is an isomorphism of algebras. Consider the adjoint map $T^*\colon B\to A$ with respect to the scalar products defined by $\varphi_A$ and $\varphi_B$. Then $T$ is $*$-preserving if and only if $T^*T\colon A\to A$ is a left $A$-module map.
\end{lemma}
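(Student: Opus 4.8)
The plan is to recast both conditions of the lemma as statements about how $T$ and its adjoint $T^*$ interact with the left regular representations of $A$ and $B$. For $a\in A$ and $b\in B$ write $L_a^A$ and $L_b^B$ for the operators of left multiplication, acting on $A$ and $B$ regarded as Hilbert spaces via $(x,y)=\varphi(y^*x)$. As was in effect shown in the proof of Lemma~\ref{lem:FrobeniusVScstar}, faithfulness of $\varphi_A$ and $\varphi_B$ gives $(L_a^A)^*=L_{a^*}^A$ and $(L_b^B)^*=L_{b^*}^B$, so that $a\mapsto L_a^A$ and $b\mapsto L_b^B$ are $*$-representations. Since $T$ is an algebra homomorphism it intertwines them,
\begin{equation*}
T L_a^A = L_{Ta}^B\, T\qquad(a\in A),
\end{equation*}
and this holds whether or not $T$ is $*$-preserving.

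The whole argument will funnel through the auxiliary relation
\begin{equation*}
L_a^A\, T^* = T^*\, L_{Ta}^B\qquad(a\in A),
\end{equation*}
which I abbreviate as $(\star)$. First I would check that $(\star)$ is equivalent to $T^*T$ being a left $A$-module map. The positive operator $P=T^*T$ on $A$ is a left module map precisely when it commutes with every $L_a^A$; using the intertwining relation one computes $P L_a^A = T^* L_{Ta}^B\, T$, while $L_a^A P = (L_a^A T^*)\,T$. Hence $P L_a^A=L_a^A P$ for all $a$ is, after cancelling the invertible $T$ on the right, exactly $(\star)$.

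Next I would check that $(\star)$ is equivalent to $T$ being $*$-preserving. Taking Hilbert-space adjoints, $(\star)$ becomes $T L_{a^*}^A = L_{(Ta)^*}^B\, T$, whereas the homomorphism relation applied to $a^*$ reads $T L_{a^*}^A = L_{T(a^*)}^B\, T$. Comparing these and cancelling $T$ on the right yields $L_{(Ta)^*}^B = L_{T(a^*)}^B$; as $B$ is unital, left multiplication is faithful, so this is equivalent to $(Ta)^*=T(a^*)$ for all $a$, that is, to $T$ being $*$-preserving. Chaining the two equivalences through $(\star)$ gives the lemma in both directions.

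I do not expect a genuine obstacle here: once the two conditions are translated into identities among the operators $L_a^A$, $L_{Ta}^B$, $T$, and $T^*$, the proof is a short diagram chase. The only points needing care are the two cancellations — invertibility of $T$ lets me cancel it on the right, and unitality of $B$ (faithfulness of the left regular representation) lets me pass from equality of multiplication operators to equality of elements — together with keeping track of which adjoint, Hilbert-space or $*$-algebraic, is being taken at each step.
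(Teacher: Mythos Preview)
Your proof is correct. It is essentially the same computation as the paper's, but you package it operator-theoretically: instead of writing out the inner product identity $(T^*T(ab),c)=(aT^*T(b),c)$ and simplifying both sides directly, you factor the argument through the intermediate condition $(\star)$, which is nothing but the Hilbert-space adjoint of the intertwining relation $TL_a^A=L_{Ta}^BT$. The paper's three-line argument collapses your two equivalences into one step by computing $(T^*T(ab),c)=(T(b),T(a)^*T(c))$ and $(aT^*T(b),c)=(T(b),T(a^*)T(c))$ and comparing; your version makes the two uses of invertibility of $T$ (once to cancel on the right, once via faithfulness of the left regular representation) more visible, at the cost of a bit more scaffolding.
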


\bp The map $T^*T$ is a left $A$-module map if and only if
\begin{equation}\label{eq:leftmod}
(T^*T(ab),c)=(a T^*T(b),c)
\end{equation}
for all $a,b,c\in A$. The left hand side of \eqref{eq:leftmod} equals
$(T(a)T(b),T(c))$, while the right hand side equals
$$
(T^*T(b),a^*c)=(T(b),T(a^*)T(c)).
$$
We thus see that \eqref{eq:leftmod} holds for all $b,c\in A$ if and only if $T(a)^*=T(a^*)$.
\ep

Motivated by this we give the following definition.

\begin{definition}\label{def:Frob-iso}
We say that an invertible morphism $T\colon A\to A'$ is an \emph{isomorphism of C$^*$-Frobenius algebras} $(A,m)$ and $(A',m')$ in a C$^*$-tensor category $\CC$ if
\begin{equation*}
m'(T\otimes T)=Tm\ \ \text{and}\ \ m(\iota\otimes T^*T)=T^*Tm.
\end{equation*}
\end{definition}

It is straightforward to check that compositions and inverses of
isomorphisms are again isomorphisms. Furthermore, if $(A,m)$ is a
C$^*$-Frobenius algebra and $T\colon A\to A'$ is any invertible
morphism satisfying $m(\iota\otimes T^*T)=T^*Tm$, then by letting
$m'=Tm(T^{-1}\otimes T^{-1})$ we get a C$^*$-Frobenius algebra
$(A',m')$.

\begin{remark}
Instead of requiring $T^*T$ to be a left $A$-module morphism in the above definition we could require $T^*T$ to be a right $A$-module morphism. This would change the notion of an isomorphism, but the isomorphism classes of C$^*$-Frobenius algebras would remain the same. Indeed, assume $T\colon A\to A'$ is an isomorphism according to Definition~\ref{def:Frob-iso}. Consider the polar decomposition $T=u|T|$, so that $|T|$ is a left $A$-module morphism. We have a linear isomorphism
$$
\End_{\Amod}(A)\cong\End_{\modA}(A),\ \ S\mapsto\pi(S)=m(S v\otimes\iota),
$$
which can be characterized by the identity
$$
m(S\otimes\iota)=m(\iota\otimes\pi(S)),
$$
since
$$
m(\iota\otimes\pi(S))=m(\iota\otimes m)(\iota\otimes S v\otimes\iota)=m(m\otimes \iota)(\iota\otimes S v\otimes\iota)
=m(Sm\otimes \iota)(\iota\otimes v\otimes\iota)=m(S\otimes\iota).
$$
Therefore if we let $\tilde T=u\pi(|T|)$, then $\tilde T^*\tilde T\in\pi(|T|)^*\pi(|T|)\in\End_{\modA}(A)$ and
$$
m'=Tm(T^{-1}\otimes T^{-1})=um(|T|^{-1}u^*\otimes u^*)=um(u^*\otimes \pi(|T|)^{-1}u^*)=\tilde Tm(\tilde T^{-1}\otimes \tilde T^{-1}),
$$
proving our claim.
\end{remark}

There are several important subclasses of C$^*$-Frobenius algebras, see again~\cite{MR3308880}.

\begin{definition}
A C$^*$-Frobenius algebra $(A,m,v)$ in $\CC$ is called
\begin{itemize}
\setlength{\itemsep}{3pt}
\item \emph{irreducible}, if $A$ is simple as a left, equivalently, as a right, $A$-module;
\item \emph{simple}, if $A$ is simple as an $A$-bimodule;
\item \emph{special}, or a $Q$-\emph{system}~\cite{MR1257245}, if $mm^*$ is scalar;
\item \emph{standard}, if the pair $(m^*v,m^*v)$ is a standard solution of the conjugate equations for $A$, that is, if $\|m^*v\|^2$ equals the intrinsic dimension $d(A)$ of $A\in\CC$.
\end{itemize}
\end{definition}

\begin{remark}\mbox{\ }

\noindent
(i) We always have a linear isomorphism $\Mor_\CC(\un,A)\cong\End_{\Amod}(A)$, $T\mapsto m(\iota\otimes T)$, with the inverse $S\mapsto S v$. Therefore irreducibility is equivalent to
the condition $\dim\Mor_\CC(\un,A)=1$.

\smallskip\noindent
(ii) As $mm^*$ is an $A$-bimodule morphism, a simple C$^*$-Frobenius algebra is automatically a $Q$-system. In particular, this is true for irreducible C$^*$-Frobenius algebras. Irreducible $Q$-systems are also called ergodic in~\cite{arXiv:1511.07982}.

\smallskip\noindent
(iii) If $(A,m,v)$ is an algebra in $\CC$ such that $mm^*$ is scalar, then it is a $Q$-system, see \cite{MR1444286}*{Section~6} or \cite{MR3308880}*{Lemma~3.5}. Similarly, once we assume that $A$ is a $Q$-system, a left $A$-module $X$ is unitary if and only if $m_X m_X^*$ is scalar, and then it is the same scalar as $mm^*$, see \cite{MR3308880}*{Lemma~3.23} and \cite{MR3509018}*{Lemma~6.1}.

\smallskip\noindent
(iv) Once $mm^*$ is assumed to be scalar, it is natural to fix a normalization of the pair $(m,v)$. For example, we may require this scalar to be $1$. Another natural choice, made  in \cite{MR3509018}, is to require $v$ to be an isometry.

\smallskip\noindent
(v) In~\cite{MR3308880} $Q$-systems are required to be standard, but we do not do this.
\end{remark}

\begin{lemma}\label{lem:nonsimpleFrob}
Any C$^*$-Frobenius algebra is unitarily isomorphic to a direct sum of simple C$^*$-Frobenius algebras.
\end{lemma}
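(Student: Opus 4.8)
The plan is to reduce the statement to the structure theory of the finite-dimensional commutative C$^*$-algebra $\mathcal{Z}:=\End_{\bimodcat{A}}(A)$ of $A$-bimodule endomorphisms of $A$, mimicking the decomposition of a finite-dimensional C$^*$-algebra into simple summands along the minimal projections of its center. First I would record two facts about $\mathcal{Z}$. It is \emph{finite-dimensional}: the Frobenius identity together with the unit axioms gives $(v^*m\otimes\iota)(\iota\otimes m^*v)=(v^*\otimes\iota)m^*m(\iota\otimes v)=\iota_A$, and symmetrically, so $(m^*v,m^*v)$ solves the conjugate equations for $A$ with $\bar A=A$; thus $A$ has a conjugate, whence $\CC(A)$, and a fortiori its C$^*$-subalgebra $\mathcal{Z}$, is finite-dimensional. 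It is \emph{commutative}: as in the remark on irreducibility, $\mathcal{Z}$ is identified with the center $Z(A)=\{z\in\Mor_\CC(\un,A)\mid m(z\otimes\iota)=m(\iota\otimes z)\}$ through $z\mapsto m(\iota\otimes z)$, $T\mapsto Tv$, and for central $z,w$ one computes $m(z\otimes w)=m(z\otimes\iota)\,w=m(\iota\otimes z)\,w=m(w\otimes z)$, so the induced product on $Z(A)$, and therefore composition in $\mathcal{Z}$, is commutative.

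Hence $\mathcal{Z}\cong\C^n$; let $p_1,\dots,p_n$ be its minimal projections, with $\sum_k p_k=\iota_A$, and set $z_k=p_kv\in Z(A)$, so that $p_k=m(z_k\otimes\iota)=m(\iota\otimes z_k)$. Using that $\CC$ is closed under subobjects, choose isometries $\iota_k\colon A_k\to A$ with $\iota_k\iota_k^*=p_k$, and define $m_k=\iota_k^*m(\iota_k\otimes\iota_k)$ and $v_k=\iota_k^*v$. The key computation, obtained from $\iota_k=p_k\iota_k$, associativity and centrality, is that $m(\iota_k\otimes\iota_l)=p_kp_l\,m(\iota_k\otimes\iota_l)$; since $p_kp_l=\delta_{kl}p_k$, this shows that each $A_k$ is a unital two-sided summand, i.e.\ products of elements of $A_k$ remain in $A_k$ (so $m_k$ is well defined and $\iota_k m_k=m(\iota_k\otimes\iota_k)$), while the product map $A_k\otimes A_l\to A$ vanishes for $k\ne l$. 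From here the algebra axioms and the Frobenius condition for $(A_k,m_k,v_k)$ follow by restricting the corresponding identities for $A$ and inserting $\iota_k\iota_k^*=p_k$ where needed; for instance $m_k(v_k\otimes\iota)=\iota_k^*m(z_k\otimes\iota_k)=\iota_k^*p_k\iota_k=\iota_{A_k}$. This makes each $A_k$ a C$^*$-Frobenius algebra.

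Finally I would assemble the unitary $T=\sum_k\iota_k\colon\bigoplus_kA_k\to A$, where $\bigoplus_kA_k$ carries $m'=\bigoplus_km_k$ and $v'=\sum_kv_k$. Unitarity is clear from $\iota_k^*\iota_l=\delta_{kl}\iota_{A_k}$ and $\sum_k\iota_k\iota_k^*=\iota_A$; moreover $Tv'=\sum_kz_k=v$, and $m(T\otimes T)=Tm'$ since $m(\iota_k\otimes\iota_l)=\delta_{kl}\iota_km_k$. As $T^*T=\iota$, the condition $m(\iota\otimes T^*T)=T^*Tm$ is trivial, so $T$ is an isomorphism of C$^*$-Frobenius algebras in the sense of Definition~\ref{def:Frob-iso}. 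Simplicity of each $A_k$ amounts to $\End_{\bimodcat{A_k}}(A_k)=\C\iota$: restriction to the corner identifies this algebra with $p_k\mathcal{Z}p_k=\C p_k$, and a finite-dimensional C$^*$-algebra strictly larger than $\C$ would contain a nontrivial projection, hence (subobjects splitting in $\CC$) a proper sub-bimodule, so triviality of this algebra is exactly simplicity of $A_k$ as a bimodule. I expect the main obstacle to be the bookkeeping of the second paragraph, namely verifying cleanly that the corners $A_k$ are genuine unital two-sided Frobenius subalgebras with vanishing off-diagonal products, whereas the finite-dimensionality and commutativity of $\mathcal{Z}$ are the conceptual point that lets the classical central-idempotent argument run verbatim.
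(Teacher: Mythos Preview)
Your proof is correct and follows essentially the same approach as the paper's: both establish that $\End_{\bimodcat{A}}(A)$ is a commutative C$^*$-algebra and then decompose $A$ along its minimal projections. You supply more detail than the paper does (explicitly deriving finite-dimensionality from self-duality of $A$, verifying the Frobenius structure on the summands, and checking simplicity), and you prove commutativity via the identification with the center $Z(A)$ rather than the paper's direct computation $STm=TSm$, but these are minor expository variations rather than a different strategy.
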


\bp Note that the C$^*$-algebra $\End_{\bimodcat{A}}(A)$ is abelian,
since $A$ is a unit object in the tensor category $\bimodcat{A}$. More
directly, if $S,T\in\End_{\bimodcat{A}}(A)$, then
$$
S T m=Sm(\iota\otimes T)=m(S\otimes T)=Tm(S\otimes\iota)=T S m,
$$
and multiplying on the right by $v\otimes\iota$ we get $ST=TS$.

For every minimal projection $z\in\End_{\bimodcat{A}}(A)$, the
subobject $z A$ of $A$ defined by $z$ becomes a C$^*$-Frobenius
algebra, with product defined by the restriction of $m$ to $z
A\otimes z A$, and $A$ is the direct sum of these algebras. \ep

\subsection{Standard \texorpdfstring{$Q$}{Q}-systems}\label{sec:Q}

Assume $(A,m)$ is a C$^*$-Frobenius algebra. Then $mm^*\colon A\to A$ is an $A$-bimodule morphism, and as was observed in~\cite{MR3308880}*{Lemma~3.5}, this morphism is invertible, so that the product $(mm^*)^{-1/2}m\colon A\otimes A\to A$
defines an isomorphic $Q$-system. We strengthen this observation as follows.

\begin{theorem} \label{thm:standard}
Any C$^*$-Frobenius algebra in a C$^*$-tensor category $\CC$ is isomorphic to a standard $Q$-system.
\end{theorem}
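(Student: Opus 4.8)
The plan is to reduce to the case of a simple C$^*$-Frobenius algebra and then to standardize it by an isomorphism implemented by a module morphism, using the freedom in Definition~\ref{def:Frob-iso}.

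For the reduction, recall from Lemma~\ref{lem:nonsimpleFrob} that any C$^*$-Frobenius algebra is a direct sum of simple ones, and that a simple C$^*$-Frobenius algebra is automatically a $Q$-system. A direct sum of standard $Q$-systems whose products satisfy $mm^*=\lambda\iota$ for one common scalar $\lambda$ is again a standard $Q$-system, since standard solutions of the conjugate equations add up orthogonally over direct sums. Rescaling a single summand by a scalar (an isomorphism $T=\mu\iota$) multiplies $mm^*$ by a positive constant while leaving both $\|m^*v\|^2$ and $d(A)$ unchanged, hence preserves standardness; so after standardizing each summand we can arrange a common value of $\lambda$. Thus it suffices to standardize one simple $Q$-system $(A,m,v)$, where $\End_{\bimodcat{A}}(A)=\C\iota$. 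For such an algebra the pair $(m^*v,m^*v)$ is a symmetric solution of the conjugate equations for $A$ (this follows from the Frobenius relation together with the unit axiom), so by definition $(A,m,v)$ is a standard $Q$-system precisely when $\|m^*v\|^2=d(A)$, and in general $\|m^*v\|^2\ge d(A)$.

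The next step is to track how $m^*v$ transforms under the allowed isomorphisms. Given a positive invertible left $A$-module morphism $W\in\End_{\Amod}(A)$, the morphism $T=W^{1/2}$ satisfies $T^*T=W$ and therefore defines an isomorphism onto $(A,m')$ with $m'=Tm(T^{-1}\otimes T^{-1})$ and unit $v'=Tv$. Using that $W$ is a left module morphism, i.e. $m^*W=(\iota\otimes W)m^*$, a short computation gives
$$
m'^*v'=(W^{-1/2}\otimes W^{1/2})\,m^*v,
$$
so that $\|m'^*v'\|^2=(m^*v)^*(W^{-1}\otimes W)(m^*v)$. The problem becomes to minimize the functional $g(W)=(m^*v)^*(W^{-1}\otimes W)(m^*v)$ over the positive invertible elements of the finite dimensional C$^*$-algebra $\End_{\Amod}(A)$; note that $g$ is invariant under $W\mapsto cW$, in agreement with the scale-invariance observed above.

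The main work, and the step I expect to be hardest, is to show that this minimum equals $d(A)$ and is attained. The lower bound $g(W)\ge d(A)$ is automatic, since every $m'^*v'$ is a solution of the conjugate equations. For attainment one verifies that $g$ is coercive on the cone of positive invertible module morphisms; here the simplicity of $A$ should guarantee that $m^*v$ couples the spectral components of $W$, forcing $g(W)\to\infty$ as $W$ degenerates, so that a minimizer $W_0$ exists. It then remains to check that the first-order stationarity condition at $W_0$ is exactly the sphericity of the solution $(W_0^{-1/2}\otimes W_0^{1/2})m^*v$, which forces $\|m_0'^*v_0'\|^2=d(A)$; equivalently, $W_0$ is the positive module morphism rendering the Frobenius functional $v^*$ tracial relative to the categorical trace (in $\Hilb_f$ this is the familiar statement that a matrix algebra is standard exactly when its functional is the normalized trace). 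This verification, which intertwines the module-morphism constraint on $W$ with the categorical trace, is the crux. Once it is carried out, $(A,m_0')$ is the desired standard $Q$-system, and reassembling the summands as above completes the proof.
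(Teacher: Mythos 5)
Your reduction to the simple case is correct (and in fact more careful than the paper's, which silently glosses over the normalization of the scalars $\lambda_k$ on the summands), and your transformation formula ${m'}^*v'=(W^{-1/2}\otimes W^{1/2})m^*v$ for $W\in\End_{\Amod}(A)$ positive invertible is right: it agrees, up to replacing $T$ by $T^{-1}$, with the formula $(T\otimes T^{-1})m^*v$ that the paper arrives at. However, the two steps you yourself flag as the crux are genuinely incomplete, and the second one hides a real subtlety. First, coercivity of $g(W)=(m^*v)^*(W^{-1}\otimes W)(m^*v)$ on the positive cone of $\End_{\Amod}(A)$ (modulo scaling) requires showing that $(p\otimes q)m^*v\neq 0$ for enough pairs of spectral projections $p,q\in\End_{\Amod}(A)$; this is plausible for a simple $Q$-system but is not a one-liner, since simplicity means $\End_{\bimodcat{A}}(A)=\C$, while $\End_{\Amod}(A)$ can still be a nontrivial direct sum of blocks. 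Second, and more seriously: the Euler--Lagrange equation at a minimizer $W_0$ (computed along $W_t=W_0^{1/2}e^{tX}W_0^{1/2}$) gives exactly $R'^*(\iota\otimes X)R'=R'^*(X\otimes\iota)R'$ for $R'=(W_0^{-1/2}\otimes W_0^{1/2})m^*v$ and $X$ ranging over the \emph{self-adjoint elements of} $\End_{\Amod}(A)$ only. The characterization of standard solutions via sphericity requires this identity for all $X\in\End_\CC(A)$, of which $\End_{\Amod}(A)$ is in general a small subalgebra. One can check in examples (e.g.\ $A=\Mat_n(\C)$ in $\Hilb_f$ with a non-tracial functional) that the restricted identity does force standardness, but that is precisely because the ``non-standardness defect'' of $(m^*v,m^*v)$ happens to live in $\End_{\Amod}(A)$ --- and proving \emph{that} in general is essentially the whole content of the theorem. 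So as written the argument is circular at its key point.

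For comparison, the paper avoids both issues by a change of viewpoint rather than a variational argument: it regards $A$ as a $1$-cell $X$ in the rigid C$^*$-$2$-category $\BB$ of $A$-modules ($\BB_{11}=\CC$, $\BB_{12}=\modA$, $\BB_{21}=\Amod$, $\BB_{22}=\bimodcat{A}$), with conjugate $\bar X=A\in\Amod$ and solution $R=m^*$, $\bar R=v$ of the conjugate equations for $X$. The general standardization theorem for conjugate equations then produces a positive invertible $T\in\End(\bar X)=\End_{\Amod}(A)$ making $((T\otimes\iota)R,(\iota\otimes T^{-1})\bar R)$ standard --- note that here $\End(\bar X)$ is the \emph{full} endomorphism algebra of the $1$-cell, so the restricted-subalgebra problem never arises --- and the induced standard solution for $X\bar X=A$ is computed to be exactly ${m'}^*v'$ for $m'=m(T\otimes\iota)$, $v'=T^{-1}v$. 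If you want to keep your variational framework, you would need to either prove directly that the trace identity on $\End_{\Amod}(A)$ implies full sphericity for solutions of the form $(T\otimes T^{-1})m^*v$, or replace the minimization by an appeal to the known standardization result as the paper does.
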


In particular, since isomorphisms of irreducible $Q$-systems are
unitary up to scalar factors, any irreducible $Q$-system is
standard. This has been already observed by M{\"u}ger
in~\cite{MR1966524}*{Remark 5.6(3)}. The general result holds for
similar reasons, but the proof requires a bit more work.

First of all, by Lemma~\ref{lem:nonsimpleFrob} it suffices to prove
the theorem for simple $Q$-systems. Let $(A,m,v)$ be such a
$Q$-system. We may assume that $v$ is an isometry and
$mm^*=\lambda\iota$. Since the object $A$ in $\CC$ is self-dual, by
passing to the subcategory generated by~$A$ we may assume that $\CC$
is rigid. We can then construct a rigid C$^*$-$2$-category $\BB$ of
modules in $\CC$ with the set $\{1,2\}$ of $0$-cells in a standard
way~\citelist{\cite{MR2075605}\cite{MR1966524}\cite{arXiv:1511.06332}}.
Concretely, we take $\BB_{11}=\CC$, $\BB_{22}=\bimodcat{A}$,
$\BB_{12}=\modA$, and $\BB_{21}=\Amod$. The tensor products are
defined over~$A$ when possible, otherwise they are taken in $\CC$.
For a discussion of unitarity of the tensor product $\otimes_A$ and
a proof of (C$^*$-)rigidity of $\BB$, see,
e.g.,~\citelist{\cite{MR3509018}\cite{arXiv:1511.06332}}.

We only want to make two additional remarks. First, the assumption
of standardness made in the above cited papers did not play an
essential role for the construction of $\BB$, the only change is
that $d(A)$ in various formulas has to be replaced by $\lambda$.
Second, it is important to remember that given modules $X\in\modA$
and $Y\in\Amod$, the structure morphism $P_{X,Y}\colon XY\to
X\otimes_A Y$ for the tensor product over $A$ is normalized so that
$P_{X,Y}P_{X,Y}^*=\lambda\iota$, which guarantees the unitarity of
$\otimes_A$.

\bp[Proof of Theorem~\ref{thm:standard}]
Using the above notation, consider $A$ as an object $X$ in $\BB_{12}=\modA$. As a conjugate object $\bar X$ we can take~$A$ considered as an object in $\BB_{21}=\Amod$. We have a solution $(R,\bar R)$ of the conjugate equations for~$X$ defined~by
$$
R=m^*\colon A=\un_2\to \bar X X=A\otimes A,\ \ \bar R=v\colon
\un_\cC=\un_1\to X \bar X= A\otimes_A A=A.
$$
We can find a positive invertible morphism $T\in\End(\bar X)=\End_{\Amod}(A)$ such that the morphisms $R'=(T\otimes\iota)R$ and $\bar R'=(\iota\otimes T^{-1})\bar R$ form a standard solution of the conjugate equations for $X$. Then the formula
$$
R_A=\bar R_A=(\iota\otimes R'\otimes\iota)\bar R'
$$
defines a standard solution of the conjugate equations for $X\bar
X=A$. Note that the morphism $\bar R\colon \un\to A=A\otimes_A A$
lifts to the morphism $\lambda^{-1}m^*v\colon \un\to A\otimes A$,
while $\iota\otimes R\otimes\iota\colon A\otimes_AA\to A\otimes_A
A\otimes A\otimes_A A$ is induced by the morphism $\iota\otimes
m^*v\otimes\iota \colon A\otimes A\to A\otimes A\otimes  A\otimes
A$. Hence we have
\begin{equation*}
\begin{split}
R_A&=(m\otimes m)(\iota\otimes (T\otimes\iota)m^*v\otimes\iota)\lambda^{-1}(\iota\otimes T^{-1})m^*v\\
&=(T\otimes\iota)(m\otimes\iota)(\iota\otimes m^*m)(\iota\otimes v\otimes\iota)\lambda^{-1}m^*T^{-1}v\\
&=(T\otimes\iota)m^*m\lambda^{-1}m^*T^{-1}v =(T\otimes \iota)m^*T^{-1}v.
\end{split}
\end{equation*}
But this means that by letting $m'=m(T\otimes\iota)=T^{-1}m(T\otimes T)$ and $v'=T^{-1}v$ we get an isomorphic C$^*$-Frobenius algebra structure on $A$ with ${m'}^*v'=R_A=\bar R_A$, so this new C$^*$-Frobenius algebra is standard. As it is simple, it is automatically a $Q$-system.
\ep

One advantage of working with standard $Q$-systems is the following result.

\begin{proposition}\label{prop:standard-iso}
Any isomorphism of standard $Q$-systems is unitary up to a scalar factor.
\end{proposition}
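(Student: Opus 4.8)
The plan is to set $S=T^{*}T$, which by the second condition in Definition~\ref{def:Frob-iso} is a positive invertible left $A$-module morphism, and to show that $S$ is in fact a scalar. The endgame is easy once we know $S$ is a \emph{bimodule} morphism: writing $mm^{*}=\lambda\iota$ and $m'{m'}^{*}=\lambda'\iota$ for the two $Q$-systems, the identity $m'=Tm(T^{-1}\otimes T^{-1})$ yields
\[
\lambda'\iota=m'{m'}^{*}=Tm(S^{-1}\otimes S^{-1})m^{*}T^{*},\qquad\text{hence}\qquad m(S^{-1}\otimes S^{-1})m^{*}=\lambda'S^{-1}.
\]
If $S$ is a bimodule morphism then $m(S^{-1}\otimes S^{-1})=S^{-2}m$, so the left-hand side equals $\lambda S^{-2}$ and we get $\lambda S^{-1}=\lambda'\iota$, i.e. $S=(\lambda/\lambda')\iota$. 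Then $(\lambda'/\lambda)^{1/2}T$ is unitary and the proposition follows. So everything reduces to upgrading ``$S$ is a left module morphism'' to ``$S$ is a bimodule morphism'', and this is precisely where standardness must enter.

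To exploit standardness I would first record that, both algebras being standard, their standard solutions are $R_{A}=\bar R_{A}=m^{*}v$ and $R_{A'}=\bar R_{A'}={m'}^{*}v'$. Using $v'=Tv$, ${m'}^{*}=(T^{-*}\otimes T^{-*})m^{*}T^{*}$ (where $T^{-*}=(T^{*})^{-1}$), and the left-module identity $m^{*}S=(\iota\otimes S)m^{*}$, a short computation gives the clean formula $R_{A'}=(T^{-*}\otimes T)R_{A}$. Next I would invoke conjugation-invariance of the categorical trace, i.e. $\Tr_{A'}(fg)=\Tr_{A}(gf)$: for $X\in\End(A)$ this gives $\Tr_{A}(X)=\Tr_{A'}(TXT^{-1})=R_{A'}^{*}(\iota\otimes TXT^{-1})R_{A'}$. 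Substituting the formula for $R_{A'}$ collapses the conjugating factors and leaves $\Tr_{A}(X)=R_{A}^{*}(S^{-1}\otimes SX)R_{A}$, which by the sphericity identity $R_{A}^{*}(B\otimes C)R_{A}=\Tr_{A}(BC^{\vee})$ equals $\Tr_{A}(S^{-1}X^{\vee}S^{\vee})$. Replacing $X$ by $X^{\vee}$ and using $\vee^{2}=\iota$, $\Tr_{A}\circ\vee=\Tr_{A}$, and cyclicity, I obtain $\Tr_{A}(X)=\Tr_{A}(S^{\vee}S^{-1}X)$ for all $X$; faithfulness of the trace then forces $S^{\vee}=S$.

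Finally I would convert $S^{\vee}=S$ into the right-module property. By the definition of $\vee$ this reads $(\iota\otimes S)R_{A}=(S\otimes\iota)R_{A}$, and combining it with $m^{*}S=(\iota\otimes S)m^{*}$ and $R_{A}=m^{*}v$ gives $(S\otimes\iota)m^{*}v=m^{*}Sv$. Now both $(S\otimes\iota)m^{*}$ and $m^{*}S$ are left $A$-module morphisms $A\to A\otimes A$ (here $m^{*}$ is a left module morphism by the left half of the Frobenius condition, and $S\otimes\iota$ is one because $S$ is), and every left module morphism $\psi\colon A\to M$ is determined by $\psi v\in\Mor(\un,M)$ through $\psi=m_{M}(\iota\otimes\psi v)$. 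Since the two morphisms agree on $v$, they coincide: $(S\otimes\iota)m^{*}=m^{*}S$, which is exactly the statement that $S$ is a right module morphism. Hence $S$ is a bimodule morphism and the first paragraph finishes the proof. The main obstacle is this last upgrade: the left-module property is given for free, whereas pinning down the right-module property genuinely requires standardness, with the essential use of standardness localized in the trace computation that produces $S^{\vee}=S$.
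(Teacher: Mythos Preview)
Your argument is correct and complete, but it takes a different route from the paper. The paper first normalizes so that $m$, $m'$ are coisometries, then uses the polar decomposition $T=u|T|$ together with a unitary change of $(A',m',v')$ to reduce to the case where $T$ itself is a positive left $A$-module endomorphism of $A=A'$. From $m'(T\otimes T)=Tm=m(\iota\otimes T)$ one gets $m'(T\otimes\iota)=m$, and then standardness is used in a single trace computation: $\Tr_A(T^2)={v'}^{*}m'(T^{2}\otimes\iota){m'}^{*}v'={v'}^{*}mm^{*}v'=d(A)$, and symmetrically $\Tr_A(T^{-2})=d(A)$; the Cauchy--Schwarz inequality for $\Tr_A$ forces $T^{2}=\iota$, hence $T=\iota$.

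Compared with this, you avoid the polar-decomposition reduction and work directly with $S=T^{*}T$, isolating the exact obstruction: $S$ is automatically a left $A$-module map, and the problem is to upgrade it to a bimodule map. Your use of standardness is localized in the identity $R_{A'}=(T^{-*}\otimes T)R_A$ combined with $\Tr_A(X)=\Tr_{A'}(TXT^{-1})$, which yields $S^{\vee}=S$; the conversion of $S^{\vee}=S$ into the right-module identity via freeness of $A$ as a left $A$-module is a nice touch. The paper's argument is shorter and more self-contained (no $\vee$-calculus, just one Cauchy--Schwarz), whereas yours is more structural: it makes transparent that standardness is precisely the condition ensuring $S$ is $\vee$-invariant, and the final step $m(S^{-1}\otimes S^{-1})m^{*}=\lambda'S^{-1}$ cleanly separates the role of the $Q$-system condition from that of standardness.
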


\bp
Assume $T\colon (A,m,v)\to (A',m',v')$ is an isomorphism of standard $Q$-systems. Using scalar isomorphisms we may replace these $Q$-systems by isomorphic ones and assume that $m$ and $m'$ are coisometries. Then $\|v\|^2=\|v'\|^2=d(A)$ by standardness. We want to show that $T$ is unitary. By taking the polar decomposition of $T$ and replacing $(A',m',v')$ by a unitarily isomorphic $Q$-system we may further assume that $T$ is a positive morphism, so that in particular $A'=A$ as objects. We then have to show that $T=\iota$.

As $m'(T\otimes T)=Tm$ and $Tm=m(\iota\otimes T)$, we have  $m'(T\otimes\iota)=m$, and then
$$
\Tr_A(T^2)={v'}^*m'(T^2\otimes\iota){m'}^*v'={v'}^*mm^*v'=d(A).
$$
Similarly $\Tr_A(T^{-2})=d(A)$. By the Cauchy--Schwarz inequality we conclude that $T^2$ is the identity morphism, hence $T$ is the identity morphism as well.
\ep

\subsection{Canonical invariant states}

Let us now consider the C$^*$-Frobenius algebras in the representation categories of compact quantum groups.

By a straightforward refinement of Lemma~\ref{lem:FrobeniusVScstar}, the C$^*$-Frobenius algebras in $\Rep G$ correspond to the pairs $(A,\varphi)$ consisting of a finite dimensional right $G$-C$^*$-algebra $A$ and a $G$-invariant faithful positive linear functional~$\varphi$ on $A$ (for $Q$-systems such a correspondence is explicitly stated in~\cite{arXiv:1511.07982}*{Proposition~3.4}). Then Theorem~\ref{thm:standard} and Proposition~\ref{prop:standard-iso} for $\CC=\Rep G$ translate into the following.

\begin{theorem} \label{thm:standard2}
For any finite dimensional (left or right) $G$-C$^*$-algebra $A$ there exists a unique $G$-invariant faithful state $\varphi$ on $A$ such that if we define a scalar product on $A$ using $\varphi$, then for the product map $m\colon A\otimes A\to A$ we have $mm^*=(\dim_q A)\iota$.
\end{theorem}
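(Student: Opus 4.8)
The plan is to transport Theorem~\ref{thm:standard} and Proposition~\ref{prop:standard-iso} through the correspondence between C$^*$-Frobenius algebras in $\Rep G$ and pairs $(A,\varphi)$. The first step is to reformulate the conclusion: for a $G$-invariant faithful \emph{state} $\varphi$ (so that the unit $v$, $v(1)=1$, is an isometry and $\varphi=v^*$), the condition $mm^*=(\dim_q A)\iota$ is equivalent to $(A,m,v)$ being a standard $Q$-system. Indeed, $mm^*$ being scalar makes $(A,m,v)$ a $Q$-system, and using that $(m^*v,m^*v)$ always solves the conjugate equations for the object $A$ (a direct consequence of the Frobenius and counit identities), one computes $\|m^*v\|^2=v^*mm^*v=(\dim_q A)\,v^*v=\dim_q A=d(A)$, which by the definition of standardness is exactly the assertion that this solution is standard. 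Conversely, for a standard $Q$-system with $mm^*=\lambda\iota$ and $v$ isometric one gets $\lambda=\|m^*v\|^2=d(A)=\dim_q A$. Throughout I use that the quantum dimension $\dim_q A$ of the object $A\in\Rep G$ does not depend on the choice of $G$-invariant inner product, since all such inner products make $A$ into unitarily equivalent objects.

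For uniqueness I would argue as follows. Suppose $\varphi_1,\varphi_2$ are two $G$-invariant faithful states satisfying $mm^*_{\varphi_i}=(\dim_q A)\iota$; by the first step both turn $(A,m)$ into standard $Q$-systems $\cA_1,\cA_2$, differing only in their inner products. Viewing the identity map as a morphism $\iota_A\colon\cA_1\to\cA_2$ in $\Rep G$, it is a $G$-equivariant algebra isomorphism, and the composite $\iota_A^*\iota_A$ (the adjoint being taken between the two inner products) is right multiplication by the positive invertible $D\in A$ with $\varphi_2=\varphi_1(\,\cdot\,D)$, hence a left $A$-module morphism. Thus $\iota_A$ is an isomorphism of C$^*$-Frobenius algebras in the sense of Definition~\ref{def:Frob-iso}, and by Proposition~\ref{prop:standard-iso} it is unitary up to a scalar. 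Unitarity of $\iota_A$ forces $D$ to be a scalar, so $\varphi_2=c\varphi_1$, and since both are states $c=1$ and $\varphi_1=\varphi_2$.

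For existence I would start from an arbitrary $G$-invariant faithful state $\varphi_0$ (one exists by averaging a faithful state against the Haar state, which is faithful as $G$ is reduced) and apply Theorem~\ref{thm:standard} to the resulting C$^*$-Frobenius algebra $(A,m,v_0)$, obtaining a standard $Q$-system $(\hat A,\hat m,\hat v)$ together with an isomorphism $T$. Passing through the correspondence of Lemma~\ref{lem:FrobeniusVScstar}, $(\hat A,\hat m,\hat v)$ is the Frobenius algebra of a finite dimensional $G$-C$^*$-algebra $\hat A$ with a $G$-invariant faithful functional $\hat\varphi$; normalizing $\hat\varphi$ to a state and using the first step, $\hat m\hat m^*=(\dim_q\hat A)\iota$. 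It then remains to pull this functional back to the \emph{given} $G$-C$^*$-algebra: one produces a $G$-equivariant $*$-isomorphism $\Psi\colon A\to\hat A$ and sets $\varphi:=\hat\varphi\circ\Psi$, after rescaling to a state. Transport along the unitary $\Psi$ preserves standardness and $\dim_q$, so $mm^*_\varphi=(\dim_q A)\iota$ by the first step.

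The main obstacle is exactly this last transport. The isomorphism $T$ furnished by Theorem~\ref{thm:standard} is an isomorphism in the sense of Definition~\ref{def:Frob-iso}, hence a $G$-equivariant \emph{algebra} isomorphism that in general fails to be $*$-preserving, and correspondingly may alter both the product and the involution. The case of a single matrix block, where the only admissible $\varphi$ is the normalized trace while a non-scalar reference state can be corrected only by a non-unitary automorphism, shows that this failure is genuinely unavoidable. The point to be established is therefore that a $G$-equivariant algebra isomorphism between two finite dimensional $G$-C$^*$-algebras can always be upgraded to a $G$-equivariant $*$-isomorphism $\Psi$ — morally, by extracting the unitary part of a polar decomposition in a way compatible with both the $G$-action and the multiplicative structure. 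Once this is in place, the reformulation together with the uniqueness argument yields the unique $\varphi$.
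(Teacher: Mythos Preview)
Your overall approach --- existence via Theorem~\ref{thm:standard}, uniqueness via Proposition~\ref{prop:standard-iso} applied to the identity map --- is exactly the paper's, and your reformulation and uniqueness paragraph are correct and somewhat more detailed than the paper's own argument.

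The ``main obstacle'' you raise, however, is not an obstacle. You assert that the isomorphism $T$ produced by Theorem~\ref{thm:standard} is a $G$-equivariant algebra isomorphism that ``in general fails to be $*$-preserving''. But the lemma immediately preceding Definition~\ref{def:Frob-iso} --- which is precisely what motivates that definition --- says that for Frobenius C$^*$-algebras, an algebra isomorphism $T$ with $T^*T$ a left module map \emph{is} $*$-preserving. Since any $T$ satisfying Definition~\ref{def:Frob-iso} meets both conditions, the $T$ from Theorem~\ref{thm:standard} automatically intertwines the involution on $A$ with the involution that Lemma~\ref{lem:FrobeniusVScstar} puts on $\hat A$. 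Thus $T$ itself serves as your $\Psi$, and no ``upgrading'' is required. You in fact already used this circle of ideas in your uniqueness argument, when you verified that $\iota_A^*\iota_A$ is a left module map to conclude that $\iota_A$ is an isomorphism in the sense of Definition~\ref{def:Frob-iso}.

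Your matrix example illustrates the confusion rather than the difficulty: for $A=M_n(\C)$ with trivial $G$ and a non-tracial reference state $\varphi_0$, the correcting map can be taken to be the identity $(M_n,\varphi_0)\to(M_n,\tau)$. This map is indeed non-unitary between the two inner products, but it is trivially $*$-preserving --- same algebra, same involution on both sides. You have conflated ``non-unitary'' with ``non-$*$-preserving''.
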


\bp By passing from $A$ to $A^\opos$ if necessary, we may assume
that $A$ is a right $G$-C$^*$-algebra, see
Remark~\ref{rem:left-right}. Then the existence of $\varphi$ is
equivalent to the statement that any C$^*$-Frobenius algebra in
$\Rep G$ is isomorphic to a standard $Q$-system such that the unit
$v\colon\un\to A$ is an isometry. If $\varphi'$ is another state as
in the formulation, then by Proposition~\ref{prop:standard-iso} the
identity map $A\to A$ must be unitary with respect to the scalar
products defined by $\varphi$ and $\varphi'$, so $\varphi=\varphi'$.
\ep

We call the state $\varphi$ given by the above theorem the \emph{canonical invariant state} on $A$, and will usually denote it by~$\varphi_A$. Unless stated otherwise, we will also always assume that $A$ is equipped with the scalar product defined by~$\varphi_A$.

\smallskip

Assume $A$ is a finite dimensional right $G$-C$^*$-algebra. If $X$
is a finitely generated right Hilbert $A$-module, then we can turn
it into a finite dimensional unitary representation of $G$ with
respect to the scalar product $(x,y)=\varphi_A(\langle
y,x\rangle_A)$. Any covariant representation of the pair $(A,G)$ on
a finite dimensional Hilbert space arises this way. By
Lemma~\ref{lem:FrobeniusVScstar} this allows us to identify the
category $\DD_A=\rmodcatin{A}{G}$ of finitely generated right
Hilbert $A$-modules with the category $\rmodcatin{A}{\Rep G}$ of
right unitary $A$-modules in $\Rep G$, so from now on we will only
use the lighter notation $\rmodcatin{A}{G}$. Of course, for this
identification any faithful $G$-invariant state on $A$ can be used,
but $\varphi_A$ is the most natural choice. Similarly, we identify
$\bimodcatin{A}{G}$ with $\bimodcatin{A}{\Rep G}$ and
$\lmodcatin{A}{G}$ with $\lmodcatin{A}{\Rep G}$.

\smallskip

The existence of canonical invariant states is not difficult to establish directly, without relying on Theorem~\ref{thm:standard}. In order to see this, assume as above that $A$ is a finite dimensional C$^*$-algebra with an action $\alpha\colon A\to A\otimes C(G)$ of $G$. Denote by $\lambda(a)$ the operator of multiplication on the left by $a\in A$. Consider the representation $\pi_\alpha$ of the algebra ${\mathcal U}(G)$ on the space $A$ given by $\pi_\alpha(\omega)a=\omega\rhd a=(\iota\otimes\omega)\alpha(a)$. Recall also that we denote by $\rho\in{\mathcal U}(G)$ the Woronowicz character.

\begin{proposition}\label{prop:invstate}
For any finite dimensional right $G$-C$^*$-algebra $A$, we have
$$
\varphi_A(a)=(\dim_q A)^{-1}\Tr(\lambda(a)\pi_\alpha(\rho))\ \ \text{for all}\ \ a\in A.
$$
\end{proposition}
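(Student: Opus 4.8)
The plan is to compute the right-hand side directly rather than to verify the defining properties of $\varphi_A$ and appeal to uniqueness. By Theorem~\ref{thm:standard2}, equipping $A$ with the scalar product defined by $\varphi_A$ turns it into a standard $Q$-system $(A,m,v)$ in $\Rep G$ with $\varphi_A=v^*$ and $mm^*=(\dim_q A)\iota$; by the very definition of standardness the pair $(m^*v,m^*v)$ is a standard solution of the conjugate equations for $A$. Note also that, since the action is a right one, $A$ is a right comodule, so $\pi_\alpha$ is exactly the representation $\pi_U$ of $\mathcal U(G)$ attached to the object $A$ of $\Rep G$, and $\pi_\alpha(\rho)$ is the positive operator representing the Woronowicz character. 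The right-hand side $\Tr(\lambda(a)\pi_\alpha(\rho))$ is an ordinary operator trace on the finite dimensional space $A$, and the essential point is that I will be able to rewrite it categorically even though the operator $\lambda(a)$ of left multiplication is not a morphism in $\Rep G$.

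The main tool is the following trace formula, valid for an \emph{arbitrary} linear operator $S\in B(H_U)$ and any standard solution $(R_U,\bar R_U)$ of the conjugate equations for $U$:
\[
\Tr(\pi_U(\rho)S)=\bar R_U^*(S\otimes\iota)\bar R_U .
\]
First I would prove this for the specific standard solution \eqref{eq:std-sol-from-Woronowicz-char}: writing $\bar R_U(1)=\sum_i\rho^{1/2}\xi_i\otimes\bar\xi_i$ for an orthonormal basis $(\xi_i)_i$ of $H_U$ and using $(\bar\xi_i,\bar\xi_j)=\delta_{ij}$, a direct computation gives $\bar R_U^*(S\otimes\iota)\bar R_U=\sum_i(S\rho^{1/2}\xi_i,\rho^{1/2}\xi_i)=\Tr(\rho^{1/2}S\rho^{1/2})=\Tr(\pi_U(\rho)S)$. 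Since the conjugate object occurs only in the second leg, the value $\bar R_U^*(S\otimes\iota)\bar R_U$ is unchanged when $\bar R_U$ is replaced by $(\iota\otimes w)\bar R_U$ for a unitary $w$; as any two standard solutions differ precisely by such a unitary, the formula holds for every standard solution, in particular for one coming from a different choice of conjugate object.

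I would then apply this with $U=A$, $S=\lambda(a)$, and the standard solution $\bar R_A=m^*v$ afforded by the $Q$-system structure, which gives
\[
\Tr(\lambda(a)\pi_\alpha(\rho))=v^*m(\lambda(a)\otimes\iota)m^*v .
\]
Here all four maps $v^*,m,m^*,v$ are fixed and $\lambda(a)$ is just a linear map, so the composition makes sense regardless of equivariance. Now $m(\lambda(a)\otimes\iota)=\lambda(a)m$ by associativity of the product (both send $x\otimes y\mapsto axy$), whence the right-hand side equals $v^*\lambda(a)\,mm^*\,v=(\dim_q A)\,v^*\lambda(a)v=(\dim_q A)\,v^*(a)=(\dim_q A)\,\varphi_A(a)$, using $mm^*=(\dim_q A)\iota$, $v(1)=1_A$ and $\varphi_A=v^*$. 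Dividing by $\dim_q A$ yields the claimed identity.

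The step I expect to be the main obstacle is precisely the trace formula of the second paragraph applied to the non-equivariant operator $\lambda(a)$: because $\lambda(a)$ is not a morphism in $\Rep G$ one cannot invoke the spherical categorical trace, and one must instead establish the evaluation of $\bar R_A^*(\,\cdot\,\otimes\iota)\bar R_A$ on all of $B(H_A)$, together with its independence of the chosen standard solution, before one is free to use $\bar R_A=m^*v$. This is also where one has to be careful with conventions, in particular that for a right action $A$ is a right comodule, so that it is the solution $\bar R_A$ (and not $R_A$) that produces $\rho$ rather than $\rho^{-1}$, matching the statement.
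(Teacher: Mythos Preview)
Your proof is correct, and it takes a genuinely different route from the paper's.

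The paper argues in the opposite direction: it takes the functional $\psi(a)=(\dim_q A)^{-1}\Tr(\lambda(a)\pi_\alpha(\rho))$, notes that it is well known to be an invariant state, and then checks directly that with respect to the scalar product defined by $\psi$ one has $mm^*=(\dim_q A)\iota$. This verification is reduced to a concrete computation in full matrix algebras, using that $\pi_\alpha(\rho)=\Ad b$ for a positive invertible $b\in A$. By the uniqueness part of Theorem~\ref{thm:standard2}, this forces $\psi=\varphi_A$.

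You instead start from the already-established standard $Q$-system structure $(A,m,v)$ associated with $\varphi_A$, prove the trace identity $\Tr(\pi_U(\rho)S)=\bar R_U^*(S\otimes\iota)\bar R_U$ for \emph{all} linear $S$ (not just morphisms), observe that it is invariant under change of standard solution and hence of conjugate object, and then plug in $\bar R_A=m^*v$ to finish by a one-line associativity argument. This is more categorical and avoids any matrix computation, but it relies on Theorem~\ref{thm:standard2} (hence ultimately on Theorem~\ref{thm:standard}) for standardness of $(m^*v,m^*v)$. The paper's point in giving this proposition was precisely to provide an argument \emph{independent} of Theorem~\ref{thm:standard}, so your proof, while perfectly valid for the proposition as stated, does not serve that secondary purpose.

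Your handling of the main subtlety---that $\lambda(a)$ is not a morphism in $\Rep G$, so one must establish the trace formula on all of $B(H_U)$ and its independence of the standard solution before specializing to $\bar R_A=m^*v$---is exactly right, and your remark about right comodules ensuring that $\bar R$ rather than $R$ gives the factor $\rho$ is also correct.
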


\bp It is well known that $(\dim_q A)^{-1}\Tr(\lambda(\cdot)\pi_\alpha(\rho))$ is an invariant state. Therefore we only have to show that if we define a scalar product on $A$ using this state, then $mm^*=(\dim_q A)\iota$, or equivalently, if we define a scalar product using the positive linear functional $\Tr(\lambda(\cdot)\pi_\alpha(\rho))$, then $mm^*=\iota$.

Consider the one-parameter group of automorphisms $\beta_t$ of $A$ defined by $\beta_t(a)=\rho^{it}\rhd a$. Since $A$ is finite dimensional, there exists a positive invertible element $b\in A$ such that $\beta_t=\Ad b^{it}$. Then $\pi_\alpha(\rho)=\Ad b$. Since $A$ is a direct sum of full matrix algebras, it is then enough to show the following: if $c\in\Mat_n(\C)$ is a positive invertible matrix and we define a scalar product on $\Mat_n(\C)$ using the positive linear functional $a\mapsto \Tr(\lambda(a)(\Ad c))=\Tr(ac)\Tr(c^{-1})$, then for the product $m$ on $\Mat_n(\C)$ we have $mm^*=\iota$. But this is a straightforward computation.
\ep

\begin{remark} \label{rem:altinv}
The above expression for $\varphi_A$ can be interpreted as follows. Consider the unique $G$-invariant conditional expectation $E\colon A\to A^G$. The elements of $A^G$ act by left multiplication on $A$, and this way we can consider $A^G$ as a subalgebra of the endomorphism ring of the object $A\in\Rep G$. Hence the normalized categorical trace $\tr_A$ on this endomorphism ring defines a tracial state on $A^G$.  (To be more precise, in order to consider $A$ as an object of $\Rep G$, we have to define a scalar product on $A$ using an invariant faithful state. But the trace on $A^G$ that we get this way is independent of any choices.) Then $\varphi_A=\tr_A E$.
\end{remark}

\begin{remark}
\label{rem:KMS-for-can-inv-stt} Another consequence of the
proposition is that $\varphi_A$ satisfies the KMS condition with
respect to $(\beta_t)_t$. Indeed, we have $\pi_\alpha(\rho)^z
\lambda(a) \pi_\alpha(\rho)^{-z} = \lambda(\rho^z \rhd a)$ and
$$
\varphi_A(a b) = \dim_q(A)^{-1} \Tr(\lambda(a b) \pi_\alpha(\rho)) = \dim_q(A)^{-1} \Tr(\lambda(b) \pi_\alpha(\rho) \lambda(a)  \pi_\alpha(\rho)^{-1} \pi_\alpha(\rho)) = \varphi_A(b \beta_{-i}(a)).
$$
\end{remark}

\section{Invertible bimodule categories}\label{sec:invertible}

\subsection{Invertible bimodule categories and Morita--Galois objects}

The notion of an invertible bimodule category was introduced
in~\cite{MR2677836}. Since relative tensor product of module
categories over infinite C$^*$-tensor categories requires some
discussion, we will adopt the following definition, which is
equivalent to the unitary version of the one in \cite{MR2677836} for finite rigid C$^*$-tensor
categories. We will return to relative tensor products in
Section~\ref{sec:relten}.

\begin{definition} \label{def:invert}
A nonzero $\CC_1$-$\CC_2$-module category $\DD$ over rigid
C$^*$-tensor categories $\CC_1$ and $\CC_2$ is called
\emph{invertible} if there exists a rigid C$^*$-$2$-category $\CC$
with the set $\{1,2\}$ of $0$-cells such that $\CC_{11}$ is
unitarily monoidally equivalent to $\CC_1$, $\CC_{22}$ is unitarily
monoidally equivalent to $\CC_2$, and $\CC_{12}$ is unitarily
equivalent to~$\DD$ as a $\CC_1$-$\CC_2$-module category.
\end{definition}

Invertible bimodule categories can be defined more intrinsically
without mentioning $2$-categories. For this we need to recall a few
definitions.

Let $\CC$ be a rigid C$^*$-tensor category and $\DD$ be a right $\CC$-module category. Then $\DD$ is called \emph{indecomposable} if it is not equivalent to a direct sum of two nonzero module categories. If $\DD$ is semisimple as a C$^*$-category, then $\DD$ indecomposable if and only if every nonzero object $X\in\DD$ is generating, meaning that any other object is a subobject of $XU$ for some $U\in\CC$.

The action of $\cC$ on $\cD$ is called \emph{proper}, or
\emph{cofinite}~\cite{arXiv:1511.07982}, if for any $X, Y\in\DD$ we
have $\cD(X, Y U_i) = 0$ for all but finitely many $i$, where
$(U_i)_i$ are representatives of the isomorphism classes of simple
objects in $\cC$. Note that if $\cD$ is indecomposable this can be
relaxed to $\cD(X, Y U_i) = 0$ for all but finitely many $i$, for
\emph{some} nonzero $X, Y$.

Finally, recall that we denote by $\End_\cC(\cD)$ the
C$^*$-tensor category with objects the unitary $\cC$-module
endofunctors of $\cD$ and morphisms the uniformly bounded natural
transformations between such endofunctors. The purely algebraic
counterpart of this category is also denoted by $\cC_\cD^*$.

\begin{theorem}
\label{thm:inv-bimod-cat-is-semisimple-proper-mod-cat}
Let $\CC_1$ and $\CC_2$ be rigid C$^*$-tensor categories and $\DD$ be a nonzero $\CC_1$-$\CC_2$-module category. Then~$\DD$ is invertible if and only if the following conditions are satisfied:
\begin{itemize}
\item[(a)] $\DD$ is semisimple as a C$^*$-category;
\item[(b)] the action of $\CC_2$ on $\DD$ is proper;
\item[(c)] the functor $\CC_1\to\End_{\CC_2}(\DD)$ defined by the action of $\CC_1$ on $\DD$ is an equivalence of C$^*$-tensor categories.
\end{itemize}
Furthermore, if these conditions are satisfied, then $\DD$ is indecomposable as a left $\CC_1$-module category and as a right $\CC_2$-module category.
\end{theorem}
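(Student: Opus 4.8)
The plan is to run both implications through the description of $\DD$ as a category of modules over a $Q$-system in $\CC_2$ and through the standard rigid C$^*$-$2$-category of modules over such a system, already constructed in the proof of Theorem~\ref{thm:standard}. Under this translation, condition (c) becomes the canonical identification $\bimodcatin{A}{\CC_2}\simeq\End_{\CC_2}(\rmodcatin{A}{\CC_2})$, so that $\CC_1$ is recovered as the category of $A$-bimodules in $\CC_2$. Throughout I use the C$^*$-form of Ostrik's theorem from the Appendix, identifying semisimple indecomposable $\CC_2$-module categories with categories of modules over $Q$-systems.

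For the forward implication, assume $\DD$ is invertible, fix a rigid C$^*$-$2$-category $\CC$ as in Definition~\ref{def:invert}, and identify $\CC_1=\CC_{11}$, $\CC_2=\CC_{22}$, $\DD=\CC_{12}$. Condition (a) follows from rigidity and idempotent completeness: for $X,Y\in\CC_{12}$, duality gives $\CC_{12}(X,Y)\cong\CC_{11}(\un_1,Y\bar X)$, which is finite dimensional because $\CC_{11}$ is a rigid C$^*$-tensor category, so $\CC_{12}$ is semisimple. For (b), the reciprocity $\CC_{12}(X,YU_i)\cong\CC_{22}(\bar U_i,\bar X Y)$ for simple $U_i\in\CC_{22}$ reduces properness to the finite decomposition of the fixed object $\bar X Y\in\CC_{22}$. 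For (c), note first that every nonzero $X\in\CC_{12}$ is generating over $\CC_{22}$: since $\bar R_X\neq0$ realizes $\un_1$ as a subobject of $X\bar X$, any $Y\in\CC_{12}$ embeds into $X\bar X Y=X(\bar X Y)$, and $\bar X Y\in\CC_{22}$ decomposes into finitely many simples. Fixing such an $X$ and setting $A=\bar X X\in\CC_{22}$, a $Q$-system, we get $\CC_{12}\simeq\rmodcatin{A}{\CC_2}$ and $\CC_{11}\simeq\bimodcatin{A}{\CC_2}$ as C$^*$-tensor categories, and the functor of (c) is identified with the canonical equivalence $\bimodcatin{A}{\CC_2}\simeq\End_{\CC_2}(\rmodcatin{A}{\CC_2})$.

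For the converse, assume (a)--(c). Then $\DD$ is indecomposable as a right $\CC_2$-module category: a decomposition $\DD=\DD_1\oplus\DD_2$ would give a nontrivial projection in the algebra of endomorphisms of $\Id_\DD$ inside $\End_{\CC_2}(\DD)$, which by (c) equals $\CC_1(\un_1)=\C$ since $\un_1$ is simple. Hence I may choose a generating $X\in\DD$, form the $Q$-system $A=\inEnd_{\CC_2}(X)$ in $\CC_2$, and identify $\DD\simeq\rmodcatin{A}{\CC_2}$ as $\CC_2$-module categories. Building the rigid C$^*$-$2$-category $\BB$ with $\BB_{22}=\CC_2$, $\BB_{11}=\bimodcatin{A}{\CC_2}$, $\BB_{12}=\rmodcatin{A}{\CC_2}$, $\BB_{21}=\lmodcatin{A}{\CC_2}$ exactly as in the proof of Theorem~\ref{thm:standard}, the canonical equivalence $\bimodcatin{A}{\CC_2}\simeq\End_{\CC_2}(\rmodcatin{A}{\CC_2})=\End_{\CC_2}(\DD)$ composed with the inverse of (c) yields a unitary monoidal equivalence $\BB_{11}\simeq\CC_1$ intertwining the two left actions on $\DD$. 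This exhibits $\DD$ as invertible. The remaining indecomposability as a left $\CC_1$-module category then follows by the symmetry of Definition~\ref{def:invert} under interchanging $1$ and $2$: the forward implication applied to the swapped $2$-category gives that $\CC_2\to\End_{\CC_1}(\DD)$ is an equivalence, and the projection argument with $\un_2$ simple applies verbatim.

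The step I expect to be the main obstacle is the identification $\bimodcatin{A}{\CC_2}\simeq\End_{\CC_2}(\rmodcatin{A}{\CC_2})$ as C$^*$-tensor categories, used in both directions. In the possibly infinite setting one must show that every unitary $\CC_2$-module endofunctor of $\rmodcatin{A}{\CC_2}$ is represented by tensoring over $A$ with a genuine $A$-bimodule, and that uniformly bounded module natural transformations correspond precisely to bimodule morphisms, in a way that is unitary and monoidal. It is exactly here that properness (b) is needed, to guarantee that the relevant internal homs and natural transformations are well defined and bounded, while semisimplicity (a) upgrades the underlying algebraic bijection supplied by Ostrik's theorem to a unitary equivalence.
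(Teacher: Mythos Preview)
Your overall architecture --- especially for the converse direction --- matches the paper's: realize $\DD$ as $\lmodcatin{A}{\CC_2}$ for an irreducible $Q$-system $A$ via the Appendix, build the rigid C$^*$-$2$-category of $A$-modules, and identify its bimodule corner with $\CC_1$ through condition (c). The difference is in how (c) is established in the forward direction. You reduce it to the identification $\bimodcatin{A}{\CC_2}\simeq\End_{\CC_2}(\lmodcatin{A}{\CC_2})$, which you rightly flag as the main obstacle and leave as a black box. The paper instead proves (c) directly from the $2$-category structure, without mentioning $Q$-systems: fix a nonzero $X\in\CC_{12}$; for any $\CC_2$-module endofunctor $F$, the isometry $d(X)^{-1/2}F(\bar R_X\otimes\iota_Z)$ exhibits $F(Z)$ as a summand of $F(X\bar X Z)\cong F(X)\,\bar X Z$, so $F$ is a summand of the object $F(X)\bar X\in\CC_1$ and essential surjectivity follows; for full faithfulness, given a module natural transformation $(\eta_Z\colon UZ\to U'Z)_Z$ one sets $\eta_0=(\iota\otimes\tr_X)(\eta_X)\in\CC_1(U,U')$ and checks $\eta_Z=\eta_0\otimes\iota_Z$ using $\eta_{X\bar X Z}=\eta_X\otimes\iota_{\bar X Z}$ and naturality. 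This short direct argument is then recycled in the converse, applied to the $2$-category $\BB$ built from $A$, and there it \emph{is} the equivalence $\BB_{11}=\bimodcatin{A}{\CC_2}\simeq\End_{\CC_2}(\BB_{12})$ that you postpone. So the paper's ordering is more economical: the ``canonical equivalence'' you defer is not an independent input but a corollary of the forward half of the theorem itself. Your route is not wrong, but to complete it you would have to supply exactly this argument (or an equivalent one) anyway.
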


\bp Assume first that $\DD$ is invertible. By passing to equivalent
categories we may assume that we have a rigid C$^*$-$2$-category
$\CC$ with the set $\{1,2\}$ of $0$-cells such that
$\CC_{11}=\CC_1$, $\CC_{22}=\CC_2$ and $\DD=\CC_{12}$. Condition~(a)
is satisfied as $\CC$ is rigid. Condition (b) is also satisfied,
since if $\DD(X,YU_i)\ne0$ then $U_i$ appears in the decomposition
of $\bar Y X\in \CC_2$ into a direct sum of simple objects. It
remains to check (c).

Let us fix a nonzero $X \in \cC_{12}$, and let $F$ be a
$\cC_2$-module endofunctor on $\cC_{12}$. Putting $Y = F(X)$, for
any object $Z \in \cC_{12}$ the isometry $d(X)^{-1/2}
F(\bar{R}_X\otimes\iota_Z)$ induces a realization of $F(Z)$ as a
direct summand of $F(X \bar{X} Z) \cong Y \bar{X} Z$. Thus, $F$ is a
direct summand of $Y \bar{X} \in \cC_1$ in $\End_{\cC_2}(\cC_{12})$.
It follows that we just need to show
$$
\cC_1(U, U') \cong \Mor_{\End_{\cC_2}(\cC_{12})}(U, U')\ \ \text{for
all}\ \ U,\,U'\in\CC_1.
$$
Thus, suppose that $(\eta_Z \colon U Z \to U' Z)_Z$ is a natural transformation of $\cC_2$-module functors. This means we have $\eta_{Z V} = \eta_Z \otimes \iota_V$ for $Z \in \cC_{12}$ and $V \in \cC_2$. We claim that $\eta_0 = (\iota \otimes \tr_X)(\eta_X) \in \cC(U, U')$ satisfies $\eta_Z = \eta_0 \otimes \iota_Z$ for $Z \in \cC_{12}$. Indeed, we have
$$
(\iota \otimes \tr_X)(\eta_X) \otimes \iota_Z = \frac{1}{d(X)}
(\iota_{U'} \otimes \bar{R}_X^* \otimes \iota_Z) (\eta_X \otimes
\iota_{\bar{X}Z}) (\iota_{U} \otimes \bar{R}_X \otimes \iota_Z)
$$
but $\eta_X \otimes \iota_{\bar{X}Z} = \eta_{X \bar{X} Z}$ and the
naturality of $\eta$ implies the right hand side of the above
identity is equal to $d(X)^{-1} \eta_Z (\iota_U\otimes\bar{R}_X^*
\bar{R}_X\otimes\iota_Z) = \eta_Z$.

Finally, $\DD$ is indecomposable as a right $\CC_2$-module category,
since for any nonzero objects $X,Y\in\DD$, the object $Y$ is a
subobject of $X(\bar XY)$. Similarly, $\DD$ is indecomposable as a
left $\CC_1$-module category.

\smallskip

Conversely, suppose that a nonzero $\cC_1$-$\cC_2$-module category $\cD$
satisfies conditions (a--c). By
Theorem~\ref{thm:proper-mod-equiv-to-Q-sys-mod}, there is an
irreducible $Q$-system $A$ in $\cC_2$ such that $\cD$ is unitarily
equivalent to $\lmodcatin{A}{\cC_2}$ as a $\cC_2$-module category.
We can therefore consider $\lmodcatin{A}{\cC_2}$ as a
$\CC_1$-$\CC_2$-module category equivalent to $\DD$. On the other
hand, see Section~\ref{sec:Q}, there is a rigid C$^*$-$2$-category
$(\cC'_{ij})_{i, j}$ such that $\cC'_{12}=\lmodcatin{A}{\cC_2}$ and
$\cC_2 = \cC'_{22}$. By the first part of the proof, the action of
$\cC'_{11}$ on $\cC'_{12}$ defines an equivalence between
$\cC'_{11}$ and $\End_{\cC_2}(\cC'_{12})$. The condition (c) implies
then that~$\cC_1$ is unitarily monoidally equivalent to $\cC'_{11}$
in a compatible way with respect to the $\CC_1$-$\CC_2$-
 and $\cC'_{11}$-$\CC_2$-module category structures on $\cC'_{12}$. Thus,
$\cD$ is an invertible bimodule category. \ep

From the above proof we also see that if $\CC$ is a rigid
C$^*$-tensor category and $\DD$ is a nonzero semisimple
indecomposable right $\CC$-module category such that the action of
$\CC$ on $\DD$ is proper, then $\End_\CC(\DD)$ is a rigid
C$^*$-tensor category and the $\End_\CC(\DD)$-$\CC$-module category
$\DD$ is invertible, so that the $2$-category structure and rigidity
automatically follow from the one-sided module structure.

\medskip

We now turn to representation categories of compact quantum groups.
Our goal is to find an algebraic characterization of invertibility
of bimodule categories. Throughout the rest of this section $G_1$
and $G_2$ denote compact quantum groups, and $A$ denotes a unital
C$^*$-algebra. We also fix representatives of irreducible
classes~$(U_i)_i$ and~$(V_j)_j$ in $\Rep G_1$ and $\Rep G_2$
respectively. Following our conventions in Section~\ref{sec:conv},
their underlying Hilbert spaces are denoted by~$H_i$ and~$H_j$.

\begin{definition}
Let $G_1$ and $G_2$ be reduced compact quantum groups. A
$G_1$-$G_2$-\emph{Morita--Galois object} is a unital C$^*$-algebra
$A$ together with commuting free actions $\alpha_1\colon A\to
C(G_1)\otimes A$ and $\alpha_2\colon A\to A\otimes C(G_2)$ such that
there is a $G_1$-$G_2$-equivariant isomorphism
$$
A^{G_1}\otimes\A\cong A^{G_2}\otimes\A
$$
of $A^{G_1}\otimes A^{G_2}$-$\A$-modules.
\end{definition}

A few comments are in order. The subspace
$$
\A = \{a \in A \mid \alpha_1(a) \in \C[G_1] \otimes A,\ \alpha_2(a)
\in A \otimes \C[G_2] \}
$$
is the regular subalgebra of $A$ with respect to the joint action of $G_1$ and $G_2$, and the tensor product is the algebraic one (we will later see that these assumptions force $A^{G_1}$ and $A^{G_2}$ to be finite dimensional). The left $A^{G_1}\otimes A^{G_2}$-module structure on $A^{G_1}\otimes\A$ is given by
$$
(a\otimes b)(x\otimes y)=a x\otimes b y,
$$
while on $A^{G_2}\otimes \A$ by
$$
(a\otimes b)(x\otimes y)=b x\otimes a y.
$$
Since the actions of $G_1$ and $G_2$ commute, $A^{G_1}$ is a right
$G_2$-C$^*$-algebra, in particular, a right $C(G_2)$-comodule, so
$A^{G_1}\otimes\A$ is a right $C(G_2)$-comodule. Similarly,
$A^{G_2}\otimes \A$ is a left $C(G_1)$-comodule. We therefore
require the isomorphism $A^{G_1}\otimes\A\cong A^{G_2}\otimes\A$ to
respect this comodule structures.

\smallskip

Existence of an isomorphism $A^{G_1}\otimes\A\cong A^{G_2}\otimes\A$ as in the above definition can be reformulated as a compatibility condition on Frobenius algebra structures on $A^{G_1}$ and $A^{G_2}$. Namely, we have the following result.

\begin{proposition} \label{prop:equivd}
Assume we are given commuting actions $\alpha_1\colon A\to C(G_1)\otimes A$ and $\alpha_2\colon A\to A\otimes C(G_2)$. Then a $G_1$-$G_2$-equivariant isomorphism $A^{G_1}\otimes\A\cong A^{G_2}\otimes\A$
of $A^{G_1}\otimes A^{G_2}$-$\A$-modules exists if and only if the following conditions hold:
\begin{itemize}
\item[(a)] the fixed point algebras $A^{G_1}$ and $A^{G_2}$ are finite dimensional;
\item[(b)] there exist a faithful $G_2$-invariant state $\psi_1$ on $A^{G_1}$
and a faithful $G_1$-invariant state~$\psi_2$ on~$A^{G_2}$ such that
if $m^*_1(1)=x^i\otimes x_i$, where $m_1\colon A^{G_1}\otimes
A^{G_1}\to A^{G_1}$ is the product map and the adjoint is computed
with respect to $\psi_1 \otimes \psi_1$ and $\psi_1$, and similarly
$m^*_2(1)=y^j\otimes y_j$ for the product $m_2$ and the
state~$\psi_2$ on~$A^{G_2}$, then we have
$$
x^i y^j x_i \otimes y_j=\lambda 1\otimes1\ \ \text{and}\ \ y^j x^i y_j\otimes x_i=\lambda1\otimes1
$$
for a nonzero scalar $\lambda$.
\end{itemize}
Furthermore, if these conditions are satisfied, then
\begin{itemize}
\item[(i)] the map
$$
S\colon A^{G_1}\otimes\A\to A^{G_2}\otimes\A,\quad a\otimes b\mapsto y^j\otimes a y_j b,
$$
is a $G_1$-$G_2$-equivariant isomorphism of $A^{G_1}\otimes A^{G_2}$-$\A$-modules, with the inverse given by $e\otimes f\mapsto \lambda^{-1} x^i\otimes ex_if$;
\item[(ii)] as the states $\psi_1$ and $\psi_2$ we can take the canonical invariant states, in which case $\lambda=\dim_q A^{G_1}=\dim_q A^{G_2}$, where we consider $A^{G_1}$ as a $G_2$-module and $A^{G_2}$ as a $G_1$-module;
\item[(iii)] the relative commutants $(A^{G_1})'\cap A^{G_2}$ and $(A^{G_2})'\cap A^{G_1}$ are trivial; in particular, $A^{G_1}$ is a simple $G_2$-algebra and $A^{G_2}$ is a simple $G_1$-algebra.
\end{itemize}
\end{proposition}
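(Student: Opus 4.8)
The plan is to establish the two implications of the equivalence separately, using the explicit map $S$ of part~(i) as the common device, and to extract the refinements (i)--(iii) as byproducts. I would first prove that the existence of an equivariant $A^{G_1}\otimes A^{G_2}$-$\A$-bimodule isomorphism $\Theta\colon A^{G_1}\otimes\A\to A^{G_2}\otimes\A$ forces~(a), since this unlocks the finite-dimensional machinery of Section~\ref{sec:Q}. The point is that $\Theta$ is determined by $\xi=\Theta(1\otimes1)$: writing $\xi=\sum_k e_k\otimes f_k$ with $e_k\in A^{G_2}$ and $f_k\in\A$, left $A^{G_1}$-linearity and right $\A$-linearity give $\Theta(a\otimes b)=\sum_k e_k\otimes a f_k b$, so the image of $\Theta$ lies in the linear span of the $e_k$ tensored with $\A$. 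Surjectivity of $\Theta$ then forces this finite span to be all of $A^{G_2}$, so $A^{G_2}$ is finite dimensional; the same argument applied to $\Theta^{-1}$ gives finite dimensionality of $A^{G_1}$. With~(a) in hand I equip $A^{G_1}$ and $A^{G_2}$ with their canonical invariant states from Theorem~\ref{thm:standard2}, obtaining $m_k m_k^*=(\dim_q)\iota$ and the Casimir elements $m_1^*(1)=\sum_i x^i\otimes x_i$ and $m_2^*(1)=\sum_j y^j\otimes y_j$.

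For the `if' direction and part~(i) I would take~(b) as given---reducing first to the canonical states via Proposition~\ref{prop:standard-iso}, so that the two legs of each Casimir are related by the KMS flow---and analyse $S(a\otimes b)=\sum_j y^j\otimes a y_j b$ directly. Right $\A$-linearity and left $A^{G_1}$-linearity are immediate, while left $A^{G_2}$-linearity is exactly the Frobenius bimodule identity $\sum_j c y^j\otimes y_j=\sum_j y^j\otimes y_j c$ for $c\in A^{G_2}$; the $G_1$-$G_2$-equivariance follows because $G_1$- and $G_2$-invariance of the states makes the Casimirs invariant tensors, so that $S$ intertwines the coactions. The substance is bijectivity, which I would establish by verifying that $T(e\otimes f)=\lambda^{-1}\sum_i x^i\otimes e x_i f$ is a two-sided inverse. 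The two identities of~(b) enter symmetrically here: the first collapses $S\circ T$ to the identity and the second collapses $T\circ S$, once the basis and dual-basis vectors are brought into position using the bimodule identities for $m_1^*$ and $m_2^*$. This repositioning is the step I expect to be the main obstacle, precisely because the canonical states are only KMS and not tracial (Remark~\ref{rem:KMS-for-can-inv-stt}): the two legs of a Casimir are genuinely distinguished, so interchanging them across a tensor factor introduces the modular automorphism, and it is exactly to absorb this asymmetry that \emph{both} identities of~(b) are needed rather than one.

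Conversely, to see that the isomorphism forces~(b), I would return to the element $\xi=\Theta(1\otimes1)$. Since $1\otimes1$ is $G_1$-invariant and $A^{G_2}$-central for the relevant actions, so is $\xi$; but the Casimir $m_2^*(1)$ is the essentially unique such element, so $\xi$ is a scalar multiple of it and $\Theta$ is a scalar multiple of $S$. Consequently the mere existence of the isomorphism makes $S$ invertible, and spelling out $S^{-1}=T$ returns precisely the two identities of~(b). The one-dimensionality of the space of $G_1$-invariant $A^{G_2}$-central elements that underlies this step is itself a consequence of the simplicity recorded in~(iii), so I would in practice prove~(iii) first and feed it back here.

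It remains to read off the numerics and the simplicity. For~(ii), applying the counit $\psi_2$ to the second leg of $\sum_{i,j} x^i y^j x_i\otimes y_j=\lambda1\otimes1$ and using $(\iota\otimes\psi_2)m_2^*(1)=1$ collapses the inner sum to $\sum_i x^i x_i=m_1 m_1^*(1)=(\dim_q A^{G_1})1$, giving $\lambda=\dim_q A^{G_1}$; the second identity yields $\lambda=\dim_q A^{G_2}$ symmetrically. For~(iii), given $z\in(A^{G_1})'\cap A^{G_2}$ I would multiply the second leg of the first identity by $z$ and then push $z$ to the outside: the bimodule identity $\sum_j z y^j\otimes y_j=\sum_j y^j\otimes y_j z$ together with the fact that $z$ commutes with every $x^i$ turns the left-hand side into $\lambda\, z\otimes1$ while the right-hand side is $\lambda\,1\otimes z$, forcing $z\otimes1=1\otimes z$ and hence $z\in\C1$. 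The symmetric computation handles $(A^{G_2})'\cap A^{G_1}$, and simplicity of $A^{G_1}$ as a $G_2$-algebra follows since any $G_2$-invariant central projection lies in $Z(A^{G_1})\cap A^{G_2}\subseteq(A^{G_1})'\cap A^{G_2}=\C1$.
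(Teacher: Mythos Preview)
Your ``if'' direction and the verifications of (i) and (iii) are essentially correct and match the paper's approach. The serious problem is in your ``only if'' direction, where you argue that $\xi=\Theta(1\otimes1)$ must be a scalar multiple of the Casimir $m_2^*(1)$, invoking a one-dimensionality of $G_1$-invariant $A^{G_2}$-central vectors that you propose to deduce from~(iii). This is circular: your own proof of~(iii) uses the identities in~(b), which is exactly what you are trying to establish. Worse, the underlying uniqueness claim is simply false. The space of $A^{G_2}$-central vectors in $A^{G_2}\otimes A^{G_2}$ is isomorphic to $A^{G_2}$ itself (in matrix-unit form, every central vector is $\sum_{r,s,t} e^r_{st}v\otimes e^r_{ts}$ for a unique $v\in A^{G_2}$), so $\Theta$ need not be a scalar multiple of~$S$.

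The paper avoids any uniqueness argument. It observes that the central vector $\xi$ corresponds to some invertible $v\in A^{G_2}$, so that $S(\zeta)=\Theta(\zeta)(v\otimes1)$; hence $S$ is an isomorphism whenever $\Theta$ is, regardless of whether they agree. Analysing $S^{-1}$ in the same way, one finds $S^{-1}(e\otimes f)=\tilde x^i u\otimes e\tilde x_i f$ for some auxiliary faithful invariant state $\tilde\psi_1$ and some invertible $u\in A^{G_1}$. The key step you are missing is then to \emph{define} $\psi_1$ as the normalization of $\tilde\psi_1(\cdot\,u^{-1})$. That this is the correct state is forced by the relations $S^{-1}S=\iota$ and $SS^{-1}=\iota$, which yield exactly~\eqref{eq:Mkey}; but one must still check that $\psi_1$ is a faithful \emph{positive} functional. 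This follows by choosing the $y^j$ orthonormal so that $y_j=(y^j)^*$, whence $\psi(x)1=\sum_j y^j x (y^j)^*$ is manifestly completely positive. Your proposal has no mechanism for producing $\psi_1$ or establishing its positivity. A secondary consequence is that your argument for~(ii) is incomplete: the statement that one \emph{can} take both states canonical relies on the freedom, established in the forward proof, to start with an arbitrary $\psi_2$; you cannot simply assume both are canonical and read off $\lambda$.
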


\begin{remark}\label{rem:mcb}
The identities in~(b) can be equivalently expressed as
\begin{equation} \label{eq:Mkey}
x^i y x_i=\lambda\psi_2(y)1, \quad y^j x y_j=\lambda\psi_1(x)1 \quad
(x\in A^{G_1}, y\in A^{G_2}).
\end{equation}
Indeed, we may, and will now and in the proof below, assume that the
vectors $x_i$ form a basis. Then the vectors $x^i$ are characterized
by $\psi_1(x_ix^k)=\delta_{ik}$, see Section~\ref{sec:Frobenius},
and similarly for the $y_j$'s. Then $1\otimes 1=\psi_2(y^j)1\otimes
y_j$, so the first identity in (b) is equivalent to
$x^iy^jx_i=\lambda\psi_2(y^j)1$ for all $j$.
\end{remark}

\bp[Proof of Proposition~\ref{prop:equivd}]
Assume we are given a $G_1$-$G_2$-equivariant isomorphism $T\colon A^{G_1}\otimes\A\to A^{G_2}\otimes\A$ of $A^{G_1}\otimes A^{G_2}$-$\A$-modules. By the $G_2$-equivariance it maps $1\otimes 1$ into a vector $z^k\otimes z_k\in A^{G_2}\otimes A^{G_2}$, and then $T$, being a morphism of $A^{G_1}$-$\A$-modules, must be given by
$$
T(a\otimes b)= z^k\otimes a z_k b.
$$
We may assume that the vectors $z^k$ are linearly independent.  It
follows then that they form a basis in $A^{G_2}$. In particular,
$A^{G_2}$ is finite dimensional, and for similar reasons $A^{G_1}$
is finite dimensional as well, which proves~(a).

Using that~$T$ is a morphism of left $A^{G_2}$-modules, we also see that $z^k\otimes z_k$ must be a central vector in the $A^{G_2}$-bimodule $A^{G_2}\otimes A^{G_2}$. Generally speaking, if $e^r_{st}$ are matrix units in $A^{G_2}$, any central vector in $A^{G_2}\otimes A^{G_2}$ has the form $\xi=\sum_{r,s,t}e^r_{st}v\otimes e^r_{ts}$ for a uniquely defined $v\in A^{G_2}$. Moreover, the slices $(\iota\otimes\omega)(\xi)$ for $\omega \in (A^{G_2})^*$ span~$A^{G_2}$ if and only if $v$ is invertible.

Now, take any faithful $G_1$-invariant state $\psi_2$ on $A^{G_2}$
and write $m_2^*(1)=y^j\otimes y_j$ with respect to $\psi_2$. By the
above discussion, the map
$$
S\colon A^{G_1}\otimes\A\to A^{G_2}\otimes\A, \ \ a\otimes b\mapsto y^j\otimes a y_j b,
$$
has the form $S(\zeta)=T(\zeta)(v\otimes1)$ for some invertible element $v\in A^{G_2}$. Hence $S$ is an isomorphism of $A^{G_1}\otimes A^{G_2}$-$\A$-modules. It is easy to see that this isomorphism is $G_1$-$G_2$-equivariant (note that the vector $m_2^*(1)$ is $G_1$-invariant, as $m_2$ is a $G_1$-equivariant map and the state $\psi_2$ is invariant).

Consider now the inverse map $S^{-1}$. By the same considerations as above, if we fix a faithful $G_2$-invariant state~$\tilde\psi_1$ on~$A^{G_1}$ and write $m_1^*(1)=\tilde x^i\otimes \tilde x_i$ with respect to $\tilde\psi_1$, then $S^{-1}$ has the form
$$
S^{-1}(e\otimes f)=\tilde x^i u \otimes e\tilde x_i f \quad (e \in A^{G_2}, f \in \A)
$$
for an invertible element $u\in A^{G_1}$. Let $\psi$ denote the linear functional $\tilde\psi_1(\cdot\, u^{-1})$ on $A^{G_1}$, and note that $\psi(\tilde x_i\tilde x^k u)=\delta_{ik}$. Then we have
$$
\tilde{x}^i u \otimes y^j \tilde{x}_i y_j  = S^{-1}(S(1 \otimes 1)) = 1 \otimes 1  \quad \text{and} \quad y^j \otimes \tilde{x}^i u y_j \tilde{x}_i = S(S^{-1}(1 \otimes 1)) = 1 \otimes 1.
$$
As in Remark~\ref{rem:mcb}, this is equivalent to
\begin{equation}\label{eq:Mkey2}
\tilde x^i u y\tilde x_i=\psi_2(y)1\ \ \text{and}\ \ y^j x
y_j=\psi(x)1 \quad (x \in A^{G_1}, y\in A^{G_2}).
\end{equation}

We may assume that the vectors $y^j$ form an orthonormal basis in
$A^{G_2}$. Then $y_j=y^{j*}$, and as $\psi(x)1=\sum_j y^j x y^{j*}$,
we conclude that $\psi$ is a $G_2$-invariant positive linear
functional. As the pairing defined by $\psi$ is nondegenerate, this
functional is faithful. Put $\lambda=\psi(1)$ and
$\psi_1=\lambda^{-1}\psi$. If we use $\psi_1$ to define $m_1^*$, we
get $m_1^*(1) = \lambda \tilde{x}^i u \otimes \tilde{x}_i$. Together
with \eqref{eq:Mkey2} this shows that condition~(b) is satisfied.
Note also that we have proved (i) along the way.

\smallskip

Conversely, if (a) and (b) are satisfied, then we get the required
structure isomorphism for a $G_1$-$G_2$-Morita--Galois object, as
described in (i).

\smallskip

Next, for (ii), from the above considerations we see that as the
state $\psi_2$ we can take any faithful $G_1$-invariant state. If we
take the canonical invariant state, then from $y^j y_j=\lambda1$ we
get $\lambda=\dim_q A^{G_2}$. But then the identity
$x^ix_i=\lambda1$ implies that $\dim_q A^{G_1}\le\lambda=\dim_q
A^{G_2}$. Similarly, if we start with the canonical invariant state
on~$A^{G_1}$, we get $\dim_q A^{G_1}\ge\dim_q A^{G_2}$. Therefore
$\dim_q A^{G_1}=\dim_q A^{G_2}$ and if we take the canonical
invariant state on one algebra, then we have to take the canonical
invariant state on the other algebra as well.

\smallskip

Finally, for (iii), if $y\in (A^{G_1})'\cap A^{G_2}$, then
$$
\lambda y=x^i x_i y=x^i y x_i=\lambda\psi_2(y)1,
$$
so $y$ is scalar. In particular, there are no non-scalar $G_1$-invariant elements in the center of $A^{G_2}$, so $A^{G_2}$ is a simple $G_1$-algebra. Similarly, $(A^{G_2})'\cap A^{G_1}=\C 1$ and $A^{G_1}$ is a simple $G_2$-algebra.
\ep

\begin{remark}
\label{rem:assoc-isometry-rem1} Another consequence
of~\eqref{eq:Mkey} is that, if we define $A$-valued inner products
on $A^{G_i} \otimes \A$ by $\langle b \otimes a, b' \otimes
a'\rangle_A= \psi_i(b^* b') a^* a'$, then the map $S$ of (i) becomes a
scalar multiple of a unitary. Indeed, for $a, a' \in \A$ and $b, b'
\in A^{G_1}$, we have
$$
\langle y^j \otimes b y_j a, y^k \otimes b' y_k a'\rangle_A =
\psi_2(y^{j*} y^k) a^* y_j^* b^* b' y_k a'.
$$
Since we can arrange $y^k = y_k^*$ as in the above proof, the right
hand side equals
$$
\psi_2(y_j y^k) a^* y^j b^* b' y_k a' = a^* y^j b^* b' y_j a' =
\lambda \psi_1(b^* b') a^* a' = \lambda \langle b \otimes a, b'
\otimes a'\rangle_A.
$$
In particular, $S$ and its inverse extend to isomorphisms of
equivariant right Hilbert $A$-modules. Conversely, starting from a
$G_1$-$G_2$-C$^*$-algebra $A$, if we assume that the actions are
free, $A^{G_1}$ and $A^{G_2}$ are finite dimensional, and that there
is an isomorphism of equivariant $(A^{G_1}\otimes
A^{G_2})$-$A$-correspondences $A^{G_1} \otimes A \to A^{G_2} \otimes
A$, then taking the regular parts, we can verify the Morita--Galois
conditions for $A$.
\end{remark}

The following is our main result.

\begin{theorem}\label{thm:main}
Assume that  we are given commuting actions $\alpha_1\colon A\to
C(G_1)\otimes A$ and $\alpha_2\colon A\to A\otimes C(G_2)$ of
reduced compact quantum groups $G_1$ and $G_2$ on a unital
C$^*$-algebra $A$. Consider the corresponding category~$\DD_A$ of
finitely generated $G_1$-$G_2$-equivariant right Hilbert
$A$-modules. Then the $(\Rep G_2)$-$(\Rep G_1)$-module
category~$\DD_A$ is invertible if and only if $A$ is a
$G_1$-$G_2$-Morita--Galois object.
\end{theorem}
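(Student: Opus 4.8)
The plan is to read off everything from the abstract criterion of Theorem~\ref{thm:inv-bimod-cat-is-semisimple-proper-mod-cat}, applied with $\CC_1=\Rep G_2$ acting on the left and $\CC_2=\Rep G_1$ acting on the right of $\DD_A$: invertibility is equivalent to (a) semisimplicity, (b) properness of the right $(\Rep G_1)$-action, and (c) the canonical functor $\Rep G_2\to\End_{\Rep G_1}(\DD_A)$ given by the left action being an equivalence of C$^*$-tensor categories. The whole translation rests on identifying the one-sided structures of $\DD_A$ through the Tannaka--Krein dictionary of Section~\ref{sec:TK}. Namely, since $\DD_A(A,A\otimes U_i)=\bigl((H_i\otimes A)^{G_1}\bigr)^{G_2}=(H_i\otimes A^{G_2})^{G_1}$, the regular algebra $\bigoplus_i\bar H_i\otimes\DD_A(A,A\otimes U_i)$ attached to the singly generated right $(\Rep G_1)$-module category $(\DD_A,A)$ equals $\A^{G_2}$; that is, as a right $(\Rep G_1)$-module category $\DD_A$ corresponds to the $G_1$-C$^*$-algebra $A^{G_2}$, and symmetrically, as a left $(\Rep G_2)$-module category it corresponds to the $G_2$-C$^*$-algebra $A^{G_1}$.

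With this dictionary in place I would first dispose of the ``cheap'' conditions. Semisimplicity plus properness of the $(\Rep G_1)$-action amount exactly to finite dimensionality of $A^{G_2}$: the endomorphism algebra of the generator is $A^{G_1}\cap A^{G_2}$, finite dimensional by (a), and properness (checked at the generator, using the relaxation available for indecomposable categories) says $\A^{G_2}=\bigoplus_i\bar H_i\otimes\DD_A(A,A\otimes U_i)$ has finitely many nonzero homogeneous parts, forcing $A^{G_2}$ finite dimensional; the symmetric application gives $A^{G_1}$ finite dimensional. Granting this, the last clause of Theorem~\ref{thm:inv-bimod-cat-is-semisimple-proper-mod-cat} makes $\DD_A$ indecomposable, so by the Remark after Proposition~\ref{prop:freeness} the $G_1$-action on $A^{G_2}$ is free, and Proposition~\ref{prop:freesubalg} (whose Galois-map proof works verbatim for quantum groups) upgrades this to freeness of $G_1$ on all of $A$; likewise for $G_2$. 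Conversely, starting from a Morita--Galois object, both actions are free with finite dimensional $A^{G_1},A^{G_2}$, so Proposition~\ref{prop:imprim} yields, via $X\mapsto X^{G_1}$, an equivalence of left $(\Rep G_2)$-module categories $\DD_A\simeq\rmodcatin{A^{G_1}}{\Rep G_2}$ and, via $X\mapsto X^{G_2}$, of right $(\Rep G_1)$-module categories $\DD_A\simeq\lmodcatin{A^{G_2}}{\Rep G_1}$; from these (a) and (b) are immediate, and by Proposition~\ref{prop:equivd}(iii) the $Q$-system $A^{G_2}$ in $\Rep G_1$ is simple, so its module category is indecomposable and the $2$-category of Section~\ref{sec:Q} is available.

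It remains to match condition (c) with the Morita--Galois isomorphism, which is the heart of the matter. Using $\DD_A\simeq\lmodcatin{A^{G_2}}{\Rep G_1}$ and the identification $\End_{\Rep G_1}(\lmodcatin{A^{G_2}}{\Rep G_1})\simeq\bimodcatin{A^{G_2}}{\Rep G_1}$ from the $2$-category picture, condition (c) becomes the statement that the functor
\[
\Rep G_2\to\bimodcatin{A^{G_2}}{\Rep G_1},\qquad V\mapsto (H_V\otimes A)^{G_2},
\]
implementing the left action $X\mapsto H_V\otimes X$ by $(H_V\otimes A)^{G_2}\otimes_{A^{G_2}}(-)$, is a tensor equivalence. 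Freeness of $G_2$ makes the $G_2$-spectral functor monoidal, supplying the tensor structure and full faithfulness; the genuine content is essential surjectivity, i.e.\ that every $A^{G_2}$-bimodule in $\Rep G_1$ is of this spectral form. I would show this is precisely the information carried by $A^{G_1}\otimes\A\cong A^{G_2}\otimes\A$: by Proposition~\ref{prop:equivd} and Remark~\ref{rem:assoc-isometry-rem1} the Morita--Galois isomorphism is a scalar-unitary $(A^{G_1}\otimes A^{G_2})$-$A$-bimodule identification exhibiting $A$ as the invertible $1$-cell linking the two $Q$-systems $A^{G_1}\in\Rep G_2$ and $A^{G_2}\in\Rep G_1$, hence identifying the left $(\Rep G_2)$-structure on $\DD_A$ with the $\bimodcatin{A^{G_2}}{\Rep G_1}$-structure coming from $\End_{\Rep G_1}(\DD_A)$. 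In one direction the explicit map $S$ of Proposition~\ref{prop:equivd}(i) produces the required natural isomorphisms and essential surjectivity; in the other, applying the equivalence of (c) to the unit object and tracking the resulting central vector in $A^{G_2}\otimes A^{G_2}$ recovers the relations~\eqref{eq:Mkey}, and thus the isomorphism.

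The hard part will be exactly this last equivalence: verifying that essential surjectivity of $V\mapsto(H_V\otimes A)^{G_2}$ onto $\bimodcatin{A^{G_2}}{\Rep G_1}$ is faithfully encoded by the algebraic identities of Proposition~\ref{prop:equivd}(b), and keeping the left/right comodule and opposite-tensor conventions straight so that the two one-sided descriptions of $\DD_A$ glue into a single rigid C$^*$-$2$-category with $\CC_{11}\simeq\Rep G_2$, $\CC_{22}\simeq\Rep G_1$ and $\CC_{12}\simeq\DD_A$. By contrast, the freeness and finite-dimensionality steps are routine once the correspondences $\DD_A\leftrightarrow(A^{G_2},G_1)$ and $\DD_A\leftrightarrow(A^{G_1},G_2)$ are established.
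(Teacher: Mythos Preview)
Your overall shape---reduce to the criteria of Theorem~\ref{thm:inv-bimod-cat-is-semisimple-proper-mod-cat} and read condition~(c) as an equivalence $\Rep G_2\to\bimodcatin{A^{G_2}}{\Rep G_1}$---is close to how the paper handles the converse direction, and the identification of the one-sided module category $(\DD_A,A)$ with the $G_1$-algebra $A^{G_2}$ is correct. But your forward direction contains a genuine gap in the freeness step.

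You argue: indecomposability of $\DD_A$ as a right $(\Rep G_1)$-module category, via the Remark after Proposition~\ref{prop:freeness}, forces $G_1$ to act freely on $A^{G_2}$, and then Proposition~\ref{prop:freesubalg} lifts this to $A$. The Remark, however, says freeness of the $G_1$-action on the algebra attached to $(\DD_A,A)$ is equivalent to \emph{every simple object of $\DD_A$ being a subobject of $A$}, which is strictly stronger than indecomposability. Concretely, take $G_1=G_2=\Z/2$, $\CC_{21}=\Rep\Z/2$, and let $A$ be the Morita--Galois object attached to $X=\un\oplus\un\in\CC_{21}$. One computes $A^{G_2}=\Mat_2(\C)$ with \emph{trivial} $G_1$-action; this is certainly not a free $\Z/2$-action, since the Galois map cannot be bijective for dimension reasons. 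Yet the paper's argument shows $G_1$ does act freely on $A$ itself. So your route through $A^{G_2}$ simply fails in general. The paper avoids this by first establishing, directly from the $2$-category structure, that $Y\mapsto Y^{G_2}$ is an equivalence $\DD_A\to\DD_{A^{G_2}}$ (this does \emph{not} use freeness), and only then deducing freeness of $G_2$ on $A$ from the impossibility of a nonzero $Y\in\DD_A$ with $Y^{G_2}=0$.

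Two smaller points. First, freeness of $G_2$ makes the spectral functor monoidal, hence faithful, but it does not by itself give fullness; the paper obtains fullness from $(A^{G_1})'\cap A^{G_2}=\C1$ (Proposition~\ref{prop:equivd}(iii)), a step you should make explicit. Second, your extraction of~\eqref{eq:Mkey} in the forward direction---``track the central vector''---is too vague to stand on its own; the paper obtains the isomorphism $A^{G_1}\otimes A\cong A^{G_2}\otimes A$ by realizing both sides as $X\bar XX$ in $\CC_{21}$, via the canonical identifications of $A^{G_1}$ with $X\bar X$ and $(A^{G_2})^{\opos}$ with $\bar XX$ as $Q$-systems (Lemmas~\ref{lem:m1}--\ref{lem:m2}). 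Some such concrete mechanism is needed.
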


By the Tannaka--Krein type correspondence for quantum group actions
discussed in Section~\ref{sec:TK}, we then get the following
corollary.

\begin{corollary}
For any reduced compact quantum groups $G_1$ and $G_2$, there is a bijective correspondence between the equivalence classes of invertible $(\Rep G_2)$-$(\Rep G_1)$-module categories and the $G_1$-$G_2$-equivariant Morita equivalence classes of $G_1$-$G_2$-Morita--Galois objects. We also have a bijective correspondence between the equivalence classes of pairs $(\DD,X)$, consisting of an invertible $(\Rep G_2)$-$(\Rep G_1)$-category $\DD$ and a nonzero object $X\in\DD$, and the isomorphism classes of $G_1$-$G_2$-Morita--Galois objects.
\end{corollary}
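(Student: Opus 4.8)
The plan is to derive the corollary from our main result (Theorem~\ref{thm:main}) together with the Tannaka--Krein type correspondence for quantum group actions recalled in Section~\ref{sec:TK}. The one ingredient not yet stated in that form is a two-sided version of the correspondence, so the first step is to set it up. A left action $\alpha_1\colon A\to C(G_1)\otimes A$ turns $\DD_A$ into a right $(\Rep G_1)$-module category with distinguished generating object $X=A$, and a commuting right action $\alpha_2\colon A\to A\otimes C(G_2)$ equips $\DD_A$ with a left $(\Rep G_2)$-module structure commuting with the right one, so that $\DD_A$ becomes a $(\Rep G_2)$-$(\Rep G_1)$-module category. Conversely, given a $(\Rep G_2)$-$(\Rep G_1)$-module category $\DD$ and a generating object $X$, the reconstruction of Section~\ref{sec:TK} applied to the right $(\Rep G_1)$-structure produces a unital left $G_1$-C$^*$-algebra $A=A(\DD,X)$, whose regular subalgebra is $\A=\bigoplus_i\bar H_i\otimes\DD(X,X\otimes U_i)$, while the left $(\Rep G_2)$-structure reconstructs, via $F(V_j)=\DD(X,V_j\otimes X)$, a commuting right $G_2$-coaction on the same $\A$. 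I would record this as a clean statement: the assignment $(\DD,X)\mapsto A(\DD,X)$ gives a one-to-one correspondence between unitary equivalence classes of pairs $(\DD,X)$, with $\DD$ a $(\Rep G_2)$-$(\Rep G_1)$-module category and $X$ generating, and isomorphism classes of unital C$^*$-algebras carrying commuting actions of $G_1$ and $G_2$; forgetting the generator gives a one-to-one correspondence between equivalence classes of singly generated such module categories and $G_1$-$G_2$-equivariant Morita equivalence classes of such algebras.

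The second step is to intersect this correspondence with the invertibility/Morita--Galois dichotomy. By Theorem~\ref{thm:main}, under the above correspondence the module category $\DD=\DD_{A(\DD,X)}$ is invertible if and only if $A(\DD,X)$ is a $G_1$-$G_2$-Morita--Galois object. Since invertibility is a property of $\DD$ alone and does not refer to the generator, it follows that being a Morita--Galois object is invariant under $G_1$-$G_2$-equivariant Morita equivalence, so that the two-sided correspondence restricts to the objects named in the corollary. To turn this into the stated bijections I would use that every nonzero object of an invertible bimodule category is generating: by the last assertion of Theorem~\ref{thm:inv-bimod-cat-is-semisimple-proper-mod-cat}, an invertible $\DD$ is semisimple and indecomposable as a right $(\Rep G_1)$-module category, hence every nonzero $X\in\DD$ is generating. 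Thus the pairs $(\DD,X)$ with $\DD$ invertible and $X\neq0$ are exactly the pairs with $\DD$ invertible and $X$ generating; this yields the correspondence with isomorphism classes of Morita--Galois objects, and forgetting $X$ yields the correspondence of equivalence classes of invertible module categories with equivariant Morita equivalence classes of Morita--Galois objects.

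The main obstacle I anticipate is the first step: making the two-sided reconstruction precise and checking its compatibilities. One must verify that the right $G_2$-coaction reconstructed from the left $(\Rep G_2)$-structure genuinely commutes with the left $G_1$-coaction reconstructed from the right $(\Rep G_1)$-structure, which at the level of the regular algebra $\A=\bigoplus_i\bar H_i\otimes\DD(X,X\otimes U_i)$ amounts to checking that the product and involution formulas of Section~\ref{sec:TK} are natural in the second, commuting $(\Rep G_2)$-variable. One must also check that a unitary equivalence of pairs $(\DD,X)\simeq(\DD',X')$ of $(\Rep G_2)$-$(\Rep G_1)$-module categories corresponds to a $*$-isomorphism $A(\DD,X)\cong A(\DD',X')$ that is simultaneously $G_1$- and $G_2$-equivariant. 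Granting this bookkeeping, which is a direct elaboration of the one-sided statements already recalled, the corollary is a formal restriction of an established bijection.
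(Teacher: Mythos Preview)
Your proposal is correct and follows exactly the route the paper takes: the corollary is stated immediately after Theorem~\ref{thm:main} with the one-line justification ``By the Tannaka--Krein type correspondence for quantum group actions discussed in Section~\ref{sec:TK}, we then get the following corollary,'' and your write-up is a faithful unpacking of that sentence. Your use of the indecomposability clause of Theorem~\ref{thm:inv-bimod-cat-is-semisimple-proper-mod-cat} to identify ``nonzero object'' with ``generating object'' is precisely the missing link, and your caveat about the two-sided bookkeeping is the one place where the paper, too, leaves the details to the reader.
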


We divide the proof of the theorem into several parts.

\subsection{From invertible bimodule categories to bi-Morita--Galois objects}

Assume first that $\DD_A$ is invertible. Consider the corresponding
rigid C$^*$-$2$-category $\CC$ with the set $\{1,2\}$ of $0$-cells
such that $\cC_1=\CC_{11}$ is equivalent to $\Rep G_1$,
$\cC_2=\CC_{22}$ is equivalent to $\Rep G_2$, and $\DD_A$ is
equivalent to $\CC_{21}$ as a $\CC_2$-$\CC_1$-module category. In
order to simplify the exposition we are not going to distinguish
between $\cC_i$ and $\Rep G_i$, although to be pedantic we should
either explicitly use our fixed unitary monoidal equivalences $\Rep
G_i\to\cC_i$ in all the formulas below or work with bicategories
instead of $2$-categories, that is, assume that $\CC$ has nontrivial
associativity morphisms.

Let $X\in\CC_{21}$ be the object corresponding to $A$. From now on
we will think of $A$ as the result of the construction of a
$G_1$-$G_2$-C$^*$-algebra from the pair $(\cC_{21},X)$. We will see
that the required isomorphism $A^{G_1}\otimes\A\cong A^{G_2}\otimes
\A$ follows from the equality $(X \bar X) X=X(\bar XX)$ in
$\cC_{21}$, while freeness of the actions follows from the
indecomposability of $\cC_{21}$ as a one-sided module category.

\smallskip

We start by establishing the freeness. The regular subalgebra
$\A\subset A$ is
$$
\bigoplus_{\substack{i\in \Irr (G_1) \\ j \in \Irr (G_2)}} \bar{H}_i
\otimes \bar{H}_j \otimes \cC_{21}(X, V_j X U_i).
$$
Recall that $C(G_1)$ coacts on the left by $(U_i^c)^*_{21}$, while
$C(G_2)$ coacts on the right by $V_j^c$.

By construction, we have
\begin{align}
\label{eq:A-G2-and-A-G1-cat-pres} A^{G_2} &= \bigoplus_{i\in \Irr
(G_1)} \bar{H}_i \otimes \cC_{21}(X, X U_i),& A^{G_1} &=
\bigoplus_{j \in \Irr (G_2)} \bar{H}_j \otimes \cC_{21}(X, V_j X).
\end{align}
In other words, the fixed point algebras are the algebras
corresponding to the object $X$ in the category $\cC_{21}$ regarded
as a one-sided module category either over $\Rep G_1$ or over $\Rep
G_2$. The joint fixed point subalgebra $(A^{G_1})^{G_2}$ is
isomorphic to $\cC_{21}(X) = \cC_{21}(X, X)$, so $G_1$ and $G_2$ act
jointly ergodically if and only if $X$ is simple.

Consider a unitary equivalence $\DD_A\to\cC_{21}$ of bimodule
categories provided by the Tannaka--Krein correspondence for
actions. Up to a natural isomorphism, it is described by the
following properties, see~\cite{MR3426224}*{Section~3}. For
$U\in\Rep G_1$ and $V\in\Rep G_2$, we put
$$
F(H_U\otimes H_V\otimes A)=VXU,
$$
and take the morphism $VF(A)U\to F(H_U\otimes H_V\otimes A)$
required by the definition of a bimodule functor to be the identity.
To describe the action of $F$ on morphisms, consider a morphism
$T\colon X\to V_jXU_i$ in $\cC_{21}$. There is a unique morphism
$\tilde T\colon A\to H_i\otimes H_j\otimes A$ in $\DD_A$ mapping
$1\in A$ into
$$
\sum_{\alpha,\beta}\xi_\alpha\otimes\zeta_\beta\otimes\bar\xi_\alpha\otimes\bar\zeta_\beta
\otimes T\in H_i\otimes H_j\otimes \bar H_i\otimes\bar
H_j\otimes\cC_{21}(X,V_jXU_i)\subset H_i\otimes H_j\otimes\A,
$$
where $(\xi_\alpha)_\alpha$ and $(\zeta_\beta)_\beta$ are
orthonormal bases in $H_i$ and $H_j$ respectively. Then we require $F(\tilde T) = T$.

\begin{lemma}\label{lem:fixed-iso}
The strict $(\Rep G_2)$-module functor $\DD_A\to\DD_{A^{G_1}}$,
$Y\mapsto Y^{G_1}$, and strict $(\Rep G_1)$-module functor
$\DD_A\to\DD_{A^{G_2}}$, $Y\mapsto Y^{G_2}$, are equivalences of
categories.
\end{lemma}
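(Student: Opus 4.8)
The plan is to read off both functors from the Tannaka--Krein correspondence together with the categorical description~\eqref{eq:A-G2-and-A-G1-cat-pres} of the fixed point algebras, and to reduce the statement to the fact that, under the given identification $\DD_A\cong\cC_{21}$, taking $G_1$-fixed points corresponds to viewing $\cC_{21}$ as a one-sided module category. I will prove the assertion for $Y\mapsto Y^{G_1}$; the claim for $Y\mapsto Y^{G_2}$ then follows by interchanging the roles of $G_1$ and $G_2$ together with left and right.

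First I would collect what invertibility gives through Theorem~\ref{thm:inv-bimod-cat-is-semisimple-proper-mod-cat}: the category $\cC_{21}$ is semisimple, and, by the argument in the proof of that theorem applied on both sides, both the left $(\Rep G_2)$- and the right $(\Rep G_1)$-action on it are proper, while $\cC_{21}$ is indecomposable on each side. Properness of the left action applied to~\eqref{eq:A-G2-and-A-G1-cat-pres} shows that $\cC_{21}(X,V_jX)$ is finite dimensional and vanishes for all but finitely many $j$, so $A^{G_1}$ is finite dimensional; indecomposability shows that $X$ is a generating object of $\cC_{21}$ as a left $(\Rep G_2)$-module category. Consequently~\eqref{eq:A-G2-and-A-G1-cat-pres} exhibits $A^{G_1}$ precisely as the right $G_2$-C$^*$-algebra attached by Tannaka--Krein to the pair $(\cC_{21},X)$ regarded as a left $(\Rep G_2)$-module category, and the correspondence furnishes a unitary equivalence $F_1\colon\DD_{A^{G_1}}\to\cC_{21}$ of left $(\Rep G_2)$-module categories with $\DD_{A^{G_1}}(A^{G_1},V_jZ)\cong\cC_{21}(X,V_jF_1(Z))$ naturally in $Z$ and $j$.

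Next I would analyse the fixed point module itself. For $Y\in\DD_A$ the inner product of two $G_1$-fixed vectors lies in $A^{G_1}$, so $Y^{G_1}$ is a $G_2$-equivariant right Hilbert $A^{G_1}$-module, and since the two actions commute its $G_2$-spectral spaces satisfy $(H_j\otimes Y^{G_1})^{G_2}=(H_j\otimes Y)^{G_1\times G_2}=\DD_A(A,V_jY)$, where $G_1$ acts trivially on $H_j$. Under the equivalence $\DD_A\cong\cC_{21}$ the last space becomes $\cC_{21}(X,V_jF(Y))$, which is finite dimensional and nonzero for only finitely many $j$; hence the regular part of $Y^{G_1}$ is finite dimensional, $Y^{G_1}$ is finitely generated over $A^{G_1}$, and $Y\mapsto Y^{G_1}$ is a well-defined strict $(\Rep G_2)$-module functor $\DD_A\to\DD_{A^{G_1}}$.

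Finally I would combine the two computations. They give natural isomorphisms $\cC_{21}(X,V_jF_1(Y^{G_1}))\cong\cC_{21}(X,V_jF(Y))$ which, I would check, are compatible with the weak tensor structures reconstructing the modules, and hence amount to an isomorphism of the spectral functors of $F_1(Y^{G_1})$ and $F(Y)$; by the one-to-one correspondence of Tannaka--Krein this forces $Y^{G_1}\cong F_1^{-1}(F(Y))$ naturally in $Y$, that is, $F_1\circ(-)^{G_1}\cong F$. Since $F$ and $F_1$ are equivalences, $Y\mapsto Y^{G_1}$ is an equivalence. The step I expect to be most delicate is precisely this last matching: one must verify not merely that the graded spectral spaces have equal dimensions, but that the identifications respect the $(\Rep G_2)$-module and algebra structures used in the reconstruction, so that generation of $X$ can be promoted to a genuine natural isomorphism of objects rather than an equality of multiplicities.
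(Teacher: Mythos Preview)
Your strategy is the same as the paper's: introduce the Tannaka--Krein equivalence $F_1\colon\DD_{A^{G_1}}\to\cC_{21}$ (the paper calls it $\tilde F$) and show that $F_1\circ(-)^{G_1}\cong F$, whence $(-)^{G_1}$ is an equivalence. The paper, however, sidesteps the ``delicate'' step you flag. Rather than comparing the spectral spaces $\cC_{21}(X,V_jF_1(Y^{G_1}))$ and $\cC_{21}(X,V_jF(Y))$ for general $Y$ and then arguing that these identifications assemble into a natural isomorphism of objects, it simply checks equality on generators: for $Y=H_V\otimes A$ one has $(H_V\otimes A)^{G_1}=H_V\otimes A^{G_1}$ (the action of $G_1$ on $H_V$ is trivial), so
\[
\tilde F\bigl((H_V\otimes A)^{G_1}\bigr)=\tilde F(H_V\otimes A^{G_1})=VX=F(H_V\otimes A)
\]
on the nose, and likewise on morphisms, as strict $(\Rep G_2)$-module functors. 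Since $X$ generates $\cC_{21}$ over $\Rep G_2$, every object of $\DD_A$ is a direct summand of some $H_V\otimes A$; hence $\tilde F\circ(-)^{G_1}$ agrees with the equivalence $F$ on a full subcategory containing every object up to direct summands, so it is itself an equivalence, and since $\tilde F$ is an equivalence, so is $(-)^{G_1}$. This direct check on generating objects eliminates any need to match weak tensor structures or invoke a Yoneda-type reconstruction.
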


\bp We will only prove the first statement. Denote the functor
$\DD_A\to\DD_{A^{G_1}}$, $Y\mapsto Y^{G_1}$, by $E$. As we already
observed above, the $G_2$-C$^*$-algebra $A^{G_1}$ corresponds to the
$(\Rep G_2)$-module category $\CC_{21}$ and object~$X$. It follows
that similarly to the functor $F\colon\DD_A\to\cC_{21}$ we have a
$(\Rep G_2)$-module functor $\tilde
F\colon\DD_{A^{G_1}}\to\CC_{21}$, $\tilde F(H_V\otimes A^{G_1})=VX$,
defining an equivalence of categories. We obviously have $E\tilde
F=F$ on the full subcategory of $\DD_A$ consisting of the modules
$H_V\otimes A$. Since $X$ generates $\CC_{21}$ as a $(\Rep
G_2)$-module category and both $F$ and $\tilde F$ are equivalences
of categories, it follows that $E$ is an equivalence of categories
as well. \ep

\begin{lemma}
The actions of $G_1$ and $G_2$ on $A$ are separately free.
\end{lemma}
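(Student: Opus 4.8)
The plan is to deduce freeness of each action from Proposition~\ref{prop:freeness}, using Lemma~\ref{lem:fixed-iso} to supply its hypothesis. First I would record that the two fixed point algebras are finite dimensional. Since $\DD_A\simeq\cC_{21}$ is invertible, Theorem~\ref{thm:inv-bimod-cat-is-semisimple-proper-mod-cat}, applied to both one-sided module structures (invertibility being symmetric), shows that the actions of $\Rep G_1$ and of $\Rep G_2$ on $\cC_{21}$ are both proper, and that $\cC_{21}$ is semisimple. By the descriptions in~\eqref{eq:A-G2-and-A-G1-cat-pres}, properness means that $\cC_{21}(X,XU_i)\ne0$ for only finitely many $i$ and $\cC_{21}(X,V_jX)\ne0$ for only finitely many $j$, so that $A^{G_2}$ and $A^{G_1}$ are finite dimensional.

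Next I would prove that $\alpha_1$ is free; the argument for $\alpha_2$ is symmetric, using the second functor of Lemma~\ref{lem:fixed-iso} and the finite dimensionality of $A^{G_2}$. Suppose, for contradiction, that the $G_1$-action on $A$ is not free. Since $A^{G_1}$ is finite dimensional, the converse part of the proof of Proposition~\ref{prop:freeness} (via the construction in Proposition~\ref{prop:imprim}) produces a representation $U\in\Rep G_1$ for which, setting $Y=\C[G_1]_U\otimes A$, the isometry $Y^{G_1}\otimes_{A^{G_1}}A\to Y$, $x\otimes a\mapsto xa$, fails to be surjective; as recalled there, $Y^{G_1}$ is finitely generated over $A^{G_1}$, its image $Y^{G_1}A$ is a proper submodule, and the orthogonal complement $Z$ of $Y^{G_1}A$ is nonzero with $Z^{G_1}=0$.

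The key observation is that this obstruction already lives in the joint-equivariant category $\DD_A$. Indeed, the $G_2$-action on $Y=\C[G_1]_U\otimes A$ involves only the second leg and commutes with the $G_1$-coaction, so $Y$ is $G_1$-$G_2$-equivariant and $Y^{G_1}A$ is a $G_1$-$G_2$-invariant submodule; since $\DD_A$ is semisimple, $Z$ is a finitely generated direct summand of $Y$, hence a genuine nonzero object of $\DD_A$. But then Lemma~\ref{lem:fixed-iso}, which asserts that $Y\mapsto Y^{G_1}$ is an \emph{equivalence} $\DD_A\to\DD_{A^{G_1}}$, forces $Z^{G_1}\ne0$, as an equivalence of categories cannot send a nonzero object to a zero object. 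This contradiction shows that $\alpha_1$ is free.

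I expect the main subtlety to be precisely this matching of categories. Proposition~\ref{prop:freeness} is a statement about the category of $G_1$-equivariant modules (forgetting $G_2$), which is strictly larger than the jointly equivariant category $\DD_A$ controlled by Lemma~\ref{lem:fixed-iso}; a naive appeal to the proposition would quantify over modules to which the lemma does not apply. What rescues the argument is that the obstruction to freeness isolated in the proof of Proposition~\ref{prop:freeness} has the special spectral form $\C[G_1]_U\otimes A$, which is automatically $G_2$-equivariant, so that the offending module $Z$ with vanishing fixed points is forced into $\DD_A$, where Lemma~\ref{lem:fixed-iso} can be brought to bear. A secondary point to verify carefully is exactly that $Z$ is finitely generated and $G_1$-$G_2$-equivariant, which I would extract from the semisimplicity of $\DD_A$ and the finite dimensionality of $A^{G_1}$.
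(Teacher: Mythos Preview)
Your proposal is correct and follows essentially the same approach as the paper: if an action were not free, the proof of Proposition~\ref{prop:freeness} produces a nonzero module with trivial fixed points, and since the obstruction module $\C[G]_U\otimes A$ is automatically equivariant for the other group, this module lies in the joint-equivariant category $\DD_A$, contradicting the equivalence in Lemma~\ref{lem:fixed-iso}. The paper compresses your second and third paragraphs into the single sentence ``the proof of that proposition respects the additional action of $G_1$ on $A$'', and does not separately record the finite dimensionality of $A^{G_1}$ and $A^{G_2}$ (this is implicit from properness, established earlier in the discussion of invertibility), but the logical content is the same.
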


\bp Let us only prove freeness of the action of $G_2$. By
Proposition~\ref{prop:freeness} it suffices to show that
$Y^{G_2}\ne0$ for any nonzero $G_2$-equivariant finitely generated
right Hilbert $A$-module $Y$. Furthermore, the proof of that
proposition respects the additional action of $G_1$ on $A$. In other
words, if the action of $G_2$ is not free, then the proof shows that
there exists a nonzero $Y\in\DD_A$ such that $Y^{G_2}=0$. But
this contradicts the previous lemma. \ep

Let us now study the fixed point algebra $A^{G_1}$ in more detail.
Consider the object $X\bar X\in\Rep G_2$. It has the structure of a
standard $Q$-system given by
$$
m = d(X)^{1/2} (\iota_X \otimes R_X^* \otimes \iota_{\bar X}),\ \  v =
d(X)^{-1/2} \bar{R}_X.
$$
In other words, if we use the picture of right unitary
$C(G_2)$-comodules for $\Rep G_2$, we can view $X\bar X$ as a right
$G_2$-C$^*$-algebra with the scalar product defined by the canonical
invariant state. It can be reconstructed from the left $(\Rep
G_2)$-module category $\rmodcatin{X\bar X}{G_2}$, so we have a
canonical isomorphism
\begin{equation} \label{eq:X-barX}
X\bar X \cong \bigoplus_{j \in \Irr (G_2)} \bar{H}_j \otimes
\Mor_{\rmodcatin{X\bar X}{G_2}}(X\bar X, V_j X\bar X).
\end{equation}
On the other hand, the functor $Y\mapsto Y\bar X$
defines a unitary strict $(\Rep G_2)$-module equivalence between
$\CC_{21}$ and $\rmodcatin{X\bar X}{G_2}$, which is an observation
going back to \cite{MR1966524}*{Proposition~4.5}. Therefore
comparing~\eqref{eq:X-barX} with~\eqref{eq:A-G2-and-A-G1-cat-pres} we get an isomorphism
$\theta\colon X\bar X\to A^{G_1}$ of $G_2$-C$^*$-algebras. If we as
usual equip $A^{G_1}$ with the scalar product defined by the
canonical invariant state, $\theta$ becomes a unitary isomorphism of
the standard $Q$-systems $X\bar X$ and $A^{G_1}$ in $\Rep G_2$.

The particular isomorphism $\theta$ that we have defined has the
following important property. As in the proof of
Lemma~\ref{lem:fixed-iso}, consider a $(\Rep G_2)$-module functor
$\tilde F\colon \DD_{A^{G_1}}\to\CC_{21}$ allowing us to reconstruct
$A^{G_1}$ as in~\eqref{eq:A-G2-and-A-G1-cat-pres}. Composing it with
the functor $\CC_{21}\to\Rep G_2$, $Y\mapsto Y\bar X$, we get a $(\Rep G_2)$-module functor
$\tilde{\tilde F}\colon\DD_{A^{G_1}}\to\Rep G_2$ such that
$\tilde{\tilde F}(H_V\otimes A^{G_1})=VX\bar X$.

\begin{lemma}
For any $V\in\Rep G_2$ and any morphism $S\colon H_V\otimes
A^{G_1}\to A^{G_1}$ in $\DD_{A^{G_1}}$, the following diagram
commutes:
$$
\begin{tikzcd}
VX\bar X \ar[r,"\tilde{\tilde F}(S)"] \ar[d,"\iota_V\otimes\theta"'] & X\bar X\ar[d,"\theta"]\\
H_V\otimes A^{G_1} \ar[r,"S"'] & A^{G_1}
\end{tikzcd}
$$
\end{lemma}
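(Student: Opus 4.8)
The plan is to reduce the statement to the single naturality identity that $\theta$ is exactly the algebra isomorphism attached by the Tannaka--Krein correspondence of Section~\ref{sec:TK} to the pointed module equivalence $\Phi\colon(\cC_{21},X)\to(\rmodcatin{X\bar X}{G_2},X\bar X)$, $Y\mapsto Y\bar X$; granting this, the rest is bookkeeping. First I would record what $\tilde{\tilde F}(S)$ is: since $\tilde F$ is full and faithful with $\tilde F(A^{G_1})=X$ and $\tilde F(H_V\otimes A^{G_1})=VX$, there is a unique $s=\tilde F(S)\in\cC_{21}(VX,X)$, and $S$ is in turn the unique morphism with $\tilde F(S)=s$. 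As $\tilde{\tilde F}$ is $\tilde F$ postcomposed with $Y\mapsto Y\bar X$, we have $\tilde{\tilde F}(S)=s\otimes\iota_{\bar X}\colon VX\bar X\to X\bar X$, which is also the underlying $\Rep G_2$-morphism of $\Phi(s)$.

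Next I would unwind $\theta$. By its construction it compares the reconstruction \eqref{eq:X-barX} of $X\bar X$ from $(\rmodcatin{X\bar X}{G_2},X\bar X)$ with the reconstruction \eqref{eq:A-G2-and-A-G1-cat-pres} of $A^{G_1}$ from $(\cC_{21},X)$, the summands being matched through $\Phi$ via $T\mapsto T\otimes\iota_{\bar X}$. Thus $\theta\colon X\bar X\to A^{G_1}$ is the isomorphism of $G_2$-C$^*$-algebras induced by $\Phi$, and it yields a $(\Rep G_2)$-module equivalence $\theta_*\colon\rmodcatin{X\bar X}{G_2}\to\DD_{A^{G_1}}$ (pull the right $X\bar X$-action back along $\theta^{-1}$) with $\theta_*\circ\Phi=\tilde F^{-1}$. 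On underlying $\Rep G_2$-morphisms $\theta_*$ is the identity, while on the distinguished object the canonical identification $\theta_*(X\bar X)\cong A^{G_1}$ is $\theta$ itself, since $\theta(m\cdot a)=\theta(m)\,a$; because $\theta_*$ is a module functor, the identification $\theta_*(VX\bar X)\cong H_V\otimes A^{G_1}$ is $\iota_V\otimes\theta$.

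Finally I would apply $\theta_*$ to $\Phi(s)=s\otimes\iota_{\bar X}$ and use $\theta_*\circ\Phi=\tilde F^{-1}$ to get $\theta_*(s\otimes\iota_{\bar X})=S$. Reading this through the two identifications of the previous paragraph, and remembering that $\theta_*$ does not alter underlying morphisms, gives $S=\theta\circ(s\otimes\iota_{\bar X})\circ(\iota_V\otimes\theta)^{-1}$, that is $\theta\circ(s\otimes\iota_{\bar X})=S\circ(\iota_V\otimes\theta)$; since $\tilde{\tilde F}(S)=s\otimes\iota_{\bar X}$ this is precisely the asserted commutativity.

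The hard part will be the identity $\theta_*\circ\Phi=\tilde F^{-1}$: although it is morally just functoriality of the reconstruction of Section~\ref{sec:TK} in the pointed module category, a careful argument must match the reconstructed product, module action and involution on the two sides, and in particular keep track of the normalizations of the standard solutions coming from the Woronowicz character in \eqref{eq:std-sol-from-Woronowicz-char}. Should one prefer to bypass this, the alternative is a direct check: compute $S$ from $s$ through the reconstructed right $A^{G_1}$-action on $H_V\otimes A^{G_1}$ using the spectral decomposition $A^{G_1}=\bigoplus_j\bar H_j\otimes\cC_{21}(X,V_jX)$, and compare with $\theta\circ(s\otimes\iota_{\bar X})$ componentwise; this is routine but lengthier.
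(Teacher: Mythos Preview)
Your approach is correct and conceptually clean, but it takes a more elaborate route than the paper's. You cast the lemma as a consequence of the functoriality of Tannaka--Krein reconstruction in the pointed module category, expressed as the identity $\theta_*\circ\Phi\cong\tilde F^{-1}$, and you correctly flag that establishing this identity carefully is where the real work lies.

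The paper instead does exactly the direct check you mention as an alternative, and it is far shorter than you anticipate. One reduces to $V=V_j$ and works with the adjoint: for $T\colon X\to V_jX$, the corresponding morphism $\tilde T\colon A^{G_1}\to H_j\otimes A^{G_1}$ in $\DD_{A^{G_1}}$ is characterized by $\tilde T(1)=\sum_\beta\zeta_\beta\otimes\bar\zeta_\beta\otimes T$, and by definition $\tilde{\tilde F}(\tilde T)=T\otimes\iota_{\bar X}$. Since $\theta$ sends the unit of $X\bar X$ (in the decomposition~\eqref{eq:X-barX}) to $1\in A^{G_1}$ and matches the summands via $T'\mapsto T'\otimes\iota_{\bar X}$, one just evaluates both sides of $\tilde T\,\theta=(\iota_j\otimes\theta)\,\tilde{\tilde F}(\tilde T)$ at the unit of $X\bar X$; both give $\sum_\beta\zeta_\beta\otimes\bar\zeta_\beta\otimes T$. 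Morphisms of right modules are determined by their value at~$1$, so this suffices, and taking adjoints yields the diagram for $S=\tilde T^*$. No tracking of standard solutions or Woronowicz characters is needed.

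What your approach buys is a conceptual explanation: the lemma is saying that $\theta$ upgrades to a natural isomorphism of $(\Rep G_2)$-module functors $\tilde{\tilde F}\Rightarrow(\text{forgetful})$, which is the right way to understand it (and the paper remarks on this immediately after the proof). What the paper's approach buys is brevity: the abstract functoriality you would need to prove is, in this concrete situation, a one-line evaluation.
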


\bp This is an immediate consequence of the definitions, making the
following argument essentially tautological.

It is enough to consider $V=V_j$. Take a morphism $T\colon X\to V_j
X$. Let $\tilde T\colon A^{G_1}\to H_j\otimes A^{G_1}$ be the
morphism in $\DD_{A^{G_1}}$ mapping $1\in A^{G_1}$ into $\sum_\beta
\zeta_\beta\otimes\bar\zeta_\beta\otimes T$, where
$(\zeta_\beta)_\beta$ is an orthonormal basis in~$H_j$. Then by
definition we have $\tilde{\tilde F}(\tilde T)=T\otimes\iota_{\bar
X}$. In terms of decomposition \eqref{eq:X-barX} this means that
$\tilde{\tilde F}(\tilde T)$ maps the unit of~$X\bar X$ into
$\sum_\beta \zeta_\beta\otimes\bar\zeta_\beta\otimes
(T\otimes\iota_{\bar X})\in H_j\otimes X\bar X$. Applying
$\iota_j\otimes\theta$ to the last element we get $\sum_\beta
\zeta_\beta\otimes\bar\zeta_\beta\otimes T=\tilde T(1)$. Therefore
$$
\tilde T\theta=(\iota_j\otimes\theta)\tilde{\tilde F}(\tilde
T)\colon X\bar X\to H_j\otimes A^{G_1}.
$$
Since any morphism $S\colon H_j\otimes A^{G_1}\to A^{G_1}$ in
$\DD_{A^{G_1}}$ has the form $\tilde T^*$ for some $T\colon X\to
V_jX$, this proves the lemma. \ep

Note that this lemma implies that $\theta$ extends to a natural isomorphism
of the $(\Rep G_2)$-module functors~$\tilde{\tilde F}$ and the forgetful functor $\DD_{A^{G_1}}\to\Rep G_2$.

\begin{lemma}\label{lem:m1}
Consider the multiplication map $m_1\colon A^{G_1}\otimes A\to A$.
Then
$$
F(m_1)(\theta\otimes\iota_X)=d(X)^{1/2}(\iota_X\otimes R^*_X)\colon X\bar X X\to X.
$$
\end{lemma}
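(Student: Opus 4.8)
The plan is to identify $F(m_1)$, after transporting along $\theta$, with the multiplication morphism of the $Q$-system $X\bar X$, and then read off the asserted formula from the standard $Q$-system structure $m_{X\bar X}=d(X)^{1/2}(\iota_X\otimes R^*_X\otimes\iota_{\bar X})$ recalled above. The first step is to see that $m_1$ is nothing but the product of $A^{G_1}$ once we pass to $G_1$-fixed points. Writing $E\colon\DD_A\to\DD_{A^{G_1}}$, $Y\mapsto Y^{G_1}$, for the equivalence of Lemma~\ref{lem:fixed-iso}, the first tensor factor $A^{G_1}$ of $A^{G_1}\otimes A$ carries the trivial $G_1$-action, so $(A^{G_1}\otimes A)^{G_1}=A^{G_1}\otimes A^{G_1}$ and $E(m_1)=m_{A^{G_1}}$, the product map of $A^{G_1}$, viewed as a morphism $H_V\otimes A^{G_1}\to A^{G_1}$ in $\DD_{A^{G_1}}$ with $V=A^{G_1}\in\Rep G_2$. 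Since $F=\tilde F\circ E$ on the subcategory of modules of the form $H_V\otimes A$ (as established in the proof of Lemma~\ref{lem:fixed-iso}), this yields $F(m_1)=\tilde F(m_{A^{G_1}})$, and therefore $\tilde{\tilde F}(m_{A^{G_1}})=F(m_1)\otimes\iota_{\bar X}$ because $\tilde{\tilde F}=(-\otimes\bar X)\circ\tilde F$.

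Next I would feed $S=m_{A^{G_1}}$ (with $V=A^{G_1}$) into the preceding lemma. Its commuting square then reads
\[
\theta\,(F(m_1)\otimes\iota_{\bar X})=m_{A^{G_1}}(\iota_{A^{G_1}}\otimes\theta)\colon A^{G_1}X\bar X\to A^{G_1}.
\]
Now I use that $\theta\colon X\bar X\to A^{G_1}$ is an isomorphism of $Q$-systems, so in particular $\theta\,m_{X\bar X}=m_{A^{G_1}}(\theta\otimes\theta)$. Precomposing the displayed identity with $\theta\otimes\iota_{X\bar X}\colon (X\bar X)(X\bar X)\to A^{G_1}X\bar X$ and noting $(\iota_{A^{G_1}}\otimes\theta)(\theta\otimes\iota_{X\bar X})=\theta\otimes\theta$ on the right, I obtain
\[
\theta\big((F(m_1)(\theta\otimes\iota_X))\otimes\iota_{\bar X}\big)=\theta\,m_{X\bar X},
\]
where on the left the trailing copy of $\bar X$ is a spectator that passes through the composition. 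Cancelling the invertible $\theta$ and substituting $m_{X\bar X}=d(X)^{1/2}(\iota_X\otimes R^*_X\otimes\iota_{\bar X})$ gives
\[
\big(F(m_1)(\theta\otimes\iota_X)\big)\otimes\iota_{\bar X}=d(X)^{1/2}(\iota_X\otimes R^*_X)\otimes\iota_{\bar X}.
\]

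Finally I would cancel the common factor $\otimes\iota_{\bar X}$. This is legitimate since the functor $Y\mapsto Y\bar X$ is the unitary $(\Rep G_2)$-module equivalence $\CC_{21}\to\rmodcatin{X\bar X}{G_2}$ used in constructing $\theta$, hence faithful, so two morphisms agreeing after tensoring with $\iota_{\bar X}$ on the right already coincide. This produces the claimed identity $F(m_1)(\theta\otimes\iota_X)=d(X)^{1/2}(\iota_X\otimes R^*_X)$.

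I expect the only genuinely delicate point to be the bridge $F(m_1)=\tilde F(m_{A^{G_1}})$: one must check that $F$ and the reconstruction functor $\tilde F$ for $A^{G_1}$ match through $E$ \emph{on morphisms}, not merely on objects, and keep careful track of the trailing $\bar X$ so that the preceding lemma applies verbatim with $S$ the product map. Everything after that is formal manipulation with the algebra isomorphism $\theta$ and the rigidity of $X$.
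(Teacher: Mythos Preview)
Your argument is correct and follows essentially the same route as the paper: reduce via $F=\tilde F\circ E$ to the multiplication map of $A^{G_1}$, pass through the functor $(-)\bar X$ to land in $\Rep G_2$, invoke the preceding lemma to rewrite $\tilde{\tilde F}(m_{A^{G_1}})$ in terms of $\theta$, and then use that $\theta$ is a $Q$-system isomorphism together with faithfulness of $(-)\bar X$ to strip the trailing $\iota_{\bar X}$. The paper's version is simply terser, writing the key identity as $\tilde{\tilde F}(m_1)(\theta\otimes\iota_{X\bar X})=\theta^*m_1(\theta\otimes\theta)=m_{X\bar X}$, but the logical content is identical to what you wrote out.
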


\bp As in the proof of Lemma~\ref{lem:fixed-iso}, consider
the functor $E\colon\DD_A\to\DD_{A^{G_1}}$. Since $\tilde FE=F$ on the modules
$H_V\otimes A$, it suffices to show that
$$
\tilde F(m_1)(\theta\otimes\iota_X)=d(X)^{1/2}(\iota_X\otimes R^*_X),
$$
where now $m_1$ denotes the multiplication map on $A^{G_1}$. Applying the functor $Y\mapsto Y\bar X$
we have to check that
$$
\tilde{\tilde F}(m_1)(\theta\otimes\iota_{X\bar X})=d(X)^{1/2}(\iota_X\otimes R^*_X\otimes\iota_{\bar X}).
$$
By the previous lemma the left hand side equals
$\theta^* m_1(\theta\otimes\theta)$, which is exactly the right hand side, since $\theta$ is an isomorphism of $Q$-systems.
\ep

This lemma characterizes the unitary isomorphism $\theta$. Indeed, any other isomorphism has the form $\theta u$, where $u$ is a unitary
automorphism of $X\bar X$. Then
$$
(\iota_X\otimes R^*_X)(u\otimes\iota_{\bar X})=\iota_X\otimes R^*_X,
$$
which implies $u=\iota$.

Similar arguments apply to $A^{G_2}$ and $\bar XX$. The main difference is that we have to use the picture of left unitary $C(G_1)$-comodules for $\Rep G_1$, and
since the tensor product of left $C(G_1)$-comodules corresponds to the opposite
tensor product of representations of $\Rep G_1$, we have to replace the product on $A^{G_2}$ with the opposite one in order to get a C$^*$-Frobenius algebra in $\Rep G_1$.
As usual we equip $A^{G_2}$ with the scalar product defined by the canonical invariant state, so $(a,b)=\varphi_{A^{G_2}}(b^*a)$ (where $b^*a$ denotes the original product). Then
$(A^{G_2})^\opos$ becomes a standard $Q$-system in $\Rep G_1$ and we get the following result.

\begin{lemma} \label{lem:m2}
Consider the standard $Q$-system $(\bar XX, m = d(X)^{1/2} (\iota_{\bar X} \otimes \bar R_X^* \otimes \iota_X), v =
d(X)^{-1/2} {R}_X)$ in $\Rep G_1$. Then there exists a unique unitary isomorphism $\theta'\colon\bar XX\to (A^{G_2})^\opos$ of standard $Q$-systems
such that for the product map $m_2\colon A^{G_2}\otimes A\to A$ we have
$$
F(m_2)(\iota_X\otimes\theta')=d(X)^{1/2}(\bar
R^*_X\otimes\iota_X)\colon X\bar XX\to X.
$$
\end{lemma}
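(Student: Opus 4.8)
The plan is to mirror the proof of Lemma~\ref{lem:m1} line for line, the only structural change being the one flagged in the paragraph preceding the statement: since $\Rep G_1$ is realized through left $C(G_1)$-comodules, its tensor product is opposite to the product of $A^{G_2}$, so the relevant $Q$-system in $\Rep G_1$ is $(A^{G_2})^\opos$ rather than $A^{G_2}$, and the conjugate morphisms $R_X,\bar R_X$ exchange roles. First I would reconstruct $(A^{G_2})^\opos$ categorically. Just as the functor $Y\mapsto Y\bar X$ realizes $\CC_{21}$ as $\rmodcatin{X\bar X}{G_2}$, the functor $Y\mapsto\bar X Y$ defines a unitary strict right $(\Rep G_1)$-module equivalence between $\CC_{21}$ and $\lmodcatin{\bar X X}{G_1}$, sending the generating object $X$ to the $Q$-system $\bar X X$ and inducing isomorphisms $\CC_{21}(X,XU_i)\cong\Mor_{\lmodcatin{\bar X X}{G_1}}(\bar X X,\bar X X U_i)$. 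Comparing this with the presentation~\eqref{eq:A-G2-and-A-G1-cat-pres} of $A^{G_2}$ yields an isomorphism $\theta'\colon\bar X X\to(A^{G_2})^\opos$ of $G_1$-C$^*$-algebras which, once $A^{G_2}$ carries the scalar product of its canonical invariant state, is a unitary isomorphism of the standard $Q$-systems $\bar X X$ and $(A^{G_2})^\opos$ in $\Rep G_1$.

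Next I would establish the stated intertwining formula by the same two-step reduction used for $m_1$. Let $E\colon\DD_A\to\DD_{A^{G_2}}$, $Y\mapsto Y^{G_2}$, be the equivalence of Lemma~\ref{lem:fixed-iso}, and let $\tilde F\colon\DD_{A^{G_2}}\to\CC_{21}$, $\tilde F(H_U\otimes A^{G_2})=XU$, be the reconstruction functor, so that $\tilde F E=F$ on the modules $H_U\otimes A$. This reduces the claim to the corresponding identity for the multiplication $m_2$ on $A^{G_2}$ itself. Composing $\tilde F$ with $Y\mapsto\bar X Y$ produces a $(\Rep G_1)$-module functor $\tilde{\tilde F}'\colon\DD_{A^{G_2}}\to\Rep G_1$ with $\tilde{\tilde F}'(H_U\otimes A^{G_2})=\bar X X U$, and, exactly as in the commuting-diagram lemma preceding Lemma~\ref{lem:m1}, the isomorphism $\theta'$ intertwines $\tilde{\tilde F}'$ with the forgetful functor $\DD_{A^{G_2}}\to\Rep G_1$. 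Applying $Y\mapsto\bar X Y$ to the reduced identity and invoking this intertwining turns its left-hand side into the product $d(X)^{1/2}(\iota_{\bar X}\otimes\bar R^*_X\otimes\iota_X)$ of the $Q$-system $\bar X X$, using that $\theta'$ is a $Q$-system isomorphism; since $Y\mapsto\bar X Y$ is faithful, undoing it delivers precisely $F(m_2)(\iota_X\otimes\theta')=d(X)^{1/2}(\bar R^*_X\otimes\iota_X)$.

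Uniqueness then runs just as for $\theta$: any competing unitary isomorphism of standard $Q$-systems is of the form $\theta' u$ for a unitary $u\in\End(\bar X X)$, and the intertwining formula forces $(\bar R^*_X\otimes\iota_X)(\iota_X\otimes u)=\bar R^*_X\otimes\iota_X$ as morphisms $X\bar X X\to X$, whence $u=\iota$ by rigidity of $X$.

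I expect the only genuine difficulty to be purely a matter of bookkeeping: keeping the left-comodule convention for $\Rep G_1$, the opposite product on $A^{G_2}$, and the placement of $R_X$ versus $\bar R_X$ mutually consistent throughout, so that it is $\bar X X$ (and not $X\bar X$) that appears as the $Q$-system, with $\bar R^*_X$ occupying the correct slots, and so that $Y\mapsto\bar X Y$ is verified to be a genuine \emph{right} $(\Rep G_1)$-module equivalence despite the opposite-tensor-product issue. Once these conventions are pinned down, every remaining step is the mirror image of the $A^{G_1}$ computation and introduces nothing new.
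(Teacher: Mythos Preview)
Your proposal is correct and follows exactly the route the paper itself indicates: the paper does not give a separate proof of this lemma but simply says ``similar arguments apply to $A^{G_2}$ and $\bar XX$,'' noting only the convention change (left $C(G_1)$-comodules force the opposite product on $A^{G_2}$ and swap the roles of $R_X$ and $\bar R_X$). Your outline spells out precisely those mirrored steps—replacing $Y\mapsto Y\bar X$ by $Y\mapsto\bar X Y$, reducing via $E$ and $\tilde F$, and reading off uniqueness from rigidity—which is all the paper intends.
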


We are now ready to establish the key property of the algebra $A$.

\begin{lemma}
There is a $G_1$-$G_2$-equivariant isomorphism $A^{G_1}\otimes A\cong A^{G_2}\otimes A$ of $(A^{G_1}\otimes A^{G_2})$-$A$-modules.
\end{lemma}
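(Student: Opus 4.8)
The plan is to transport both modules through the module equivalence $F\colon\DD_A\to\CC_{21}$ and to read off the desired isomorphism from the strict associativity $(X\bar X)X=X(\bar XX)$ in $\CC_{21}$. First I would record the effect of $F$ on the two objects in the statement. The module $A^{G_1}\otimes A$ is the result of the left $(\Rep G_2)$-action of the object $A^{G_1}$ on $A$, while $A^{G_2}\otimes A$ is the result of the right $(\Rep G_1)$-action of the object $A^{G_2}$ on $A$; since $F(A)=X$ and $F(H_U\otimes H_V\otimes A)=VXU$, the functor $F$ carries them to $(A^{G_1})X$ and $X(A^{G_2})$ respectively. Using the $Q$-system isomorphisms $\theta\colon X\bar X\to A^{G_1}$ and $\theta'\colon\bar XX\to(A^{G_2})^\opos$ pinned down by Lemmas~\ref{lem:m1} and~\ref{lem:m2}, these objects are further identified with $(X\bar X)X$ and $X(\bar XX)$.

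Next I would define $\Phi$. Because $\CC$ is a strict $2$-category we have the literal equality $(X\bar X)X=X(\bar XX)$, so the composite $F(A^{G_1}\otimes A)\cong(X\bar X)X=X(\bar XX)\cong F(A^{G_2}\otimes A)$ is an isomorphism in $\CC_{21}$. As $F$ is fully faithful, there is a unique morphism $\Phi\colon A^{G_1}\otimes A\to A^{G_2}\otimes A$ in $\DD_A$ with $F(\Phi)$ equal to this composite, and $\Phi$ is automatically an isomorphism. Being a morphism in $\DD_A$, it is by construction a $G_1$-$G_2$-equivariant isomorphism of right Hilbert $A$-modules; it remains only to check that $\Phi$ intertwines the left actions of $A^{G_1}$ and of $A^{G_2}$.

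For this I would translate the four left-multiplication maps through $F$. On $A^{G_1}\otimes A$ the left $A^{G_1}$-action is multiplication in the first tensor factor, which under $\theta$ becomes the $Q$-system product $m_{X\bar X}=d(X)^{1/2}(\iota_X\otimes R_X^*\otimes\iota_{\bar X})$ on the leading $X\bar X$ of $(X\bar X)X$; on $A^{G_2}\otimes A$ the same action is left multiplication of $A^{G_1}$ on the $A$-factor, equal by Lemma~\ref{lem:m1} to $d(X)^{1/2}(\iota_X\otimes R_X^*)$ applied to the leading $X$ of $X(\bar XX)$. Under $(X\bar X)X=X(\bar XX)$ both maps are the contraction of the incoming $\bar X$ against the leading copy of $X$ by $R_X^*$, hence agree. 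Symmetrically, the left $A^{G_2}$-action is, on $A^{G_1}\otimes A$, left multiplication on the $A$-factor, equal by Lemma~\ref{lem:m2} to $d(X)^{1/2}(\bar R_X^*\otimes\iota_X)$ on the trailing $X$, and, on $A^{G_2}\otimes A$, multiplication in the $\bar XX$-factor, equal to $m_{\bar XX}=d(X)^{1/2}(\iota_{\bar X}\otimes\bar R_X^*\otimes\iota_X)$; again both are the contraction of the incoming $X$ against the trailing copy of $X$ by $\bar R_X^*$, and so agree. Thus $\Phi$ is left $A^{G_1}\otimes A^{G_2}$-linear and the lemma follows.

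The hard part is bookkeeping rather than ideas. One must keep the two one-sided module-category conventions straight and, crucially, remember that $\theta'$ trivialises the \emph{opposite} algebra $(A^{G_2})^\opos$: the ordinary left $A^{G_2}$-multiplication becomes multiplication in $\bar XX$ with the incoming factor placed on the right, which is exactly what makes the $\bar R_X^*$-contractions on the two sides line up. One should also check that each left-multiplication map is genuinely a morphism in $\DD_A$, so that $F$ may be applied; this holds because $A^{G_i}$ is a $Q$-system in the relevant representation category acting internally on the module. Once these conventions are fixed, equality of the two presentations of each action is forced by the conjugate equations, the conceptual content being precisely that the associativity $(X\bar X)X=X(\bar XX)$ matches the two ways of viewing $A$ as a module.
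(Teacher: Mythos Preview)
Your argument is correct and follows essentially the same route as the paper: transport both modules to $\cC_{21}$ via $F$, identify them with $X\bar XX$ using $\theta$ and $\theta'$, and read off the isomorphism from the strict equality $(X\bar X)X=X(\bar XX)$, checking compatibility of the actions via Lemmas~\ref{lem:m1} and~\ref{lem:m2}. The only cosmetic difference is that the paper packages the two left actions as a single $A^{G_1}$-$(A^{G_2})^\opos$-bimodule structure (equivalently an $X\bar X$-$\bar XX$-bimodule structure), which absorbs the $\opos$-bookkeeping you handle explicitly.
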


\bp Consider the modules $X_1=A^{G_1}\otimes A$ and
$X_2=A^{G_2}\otimes A$. They are $A^{G_1}$-$(A^{G_2})^\opos$-modules
in the category $\DD_A$. Using the isomorphisms $\theta$ and
$\theta'$ we can equivalently view $X_1$ and $X_2$ as $X\bar
X$-$\bar XX$-modules in~$\DD_A$. Then the bimodule functor
$F\colon\DD_A\to\cC_{21}$ allows us to introduce $X\bar X$-$\bar
XX$-module structures on~$F(X_1)$ and $F(X_2)$, hence also on
$(\theta^*\otimes\iota_X)F(X_1)$ and
$(\iota_X\otimes\theta'^*)F(X_2)$. Let us consider them in more
detail.

We have $F(X_1)=A^{G_1}X$. The left $A^{G_1}$-module structure on
$A^{G_1}X$ comes from the multiplication on~$A^{G_1}$. Hence the
left $X\bar X$-module structure on
$(\theta^*\otimes\iota_X)F(X_1)=X\bar X X$ also comes from the
multiplication on $X\bar X$. On the other hand, the right
$(A^{G_2})^\opos$-structure on $A^{G_1}X$ is given by
$$
\iota_{A^{G_1}}\otimes F(m_2)\colon A^{G_1}XA^{G_2}\to A^{G_1}X.
$$
Using Lemma~\ref{lem:m2} we conclude that the right $\bar XX$-module
structure on $(\theta^*\otimes\iota_X)F(X_1)=X\bar X X$ comes from
the multiplication on $\bar X X$. Similar arguments apply to
$(\iota_X\otimes\theta'^*)F(X_2)$.

We thus have the equalities
$$
 (\theta^*\otimes\iota_X)F(X_1)=X\bar X X=(\iota_X\otimes\theta'^*)F(X_2)
$$
of $X\bar X$-$\bar XX$-modules. Hence the unique isomorphism $\pi\colon X_1\to X_2$ in~$\DD_A$ such that
$$
F(\pi)=(\iota_X\otimes{\theta'}) (\theta^*\otimes\iota_X)
$$
must be an isomorphism of $X\bar X$-$\bar XX$-modules, or equivalently, of $A^{G_1}$-$(A^{G_2})^\opos$-modules.
\ep

This finishes the proof of Theorem~\ref{thm:main} in one direction.

\begin{remark}
In the above proof we used a functor $F\colon\DD_A\to\CC_{21}$. We could have equally well used the functor going in the opposite direction defined in~\cite{MR3141721}. Namely, we have a functor mapping $Y\in\CC_{21}$ into a completion of
$$
\cE_Y=\bigoplus_{i,j}\bar H_i\otimes\bar
H_j\otimes\cC_{21}(X,V_jYU_i).
$$
This functor has the obvious action on morphisms. However, its
bimodule functor structure is a bit more difficult to describe. One
minor advantage of using this functor is that we would be able to
compute an isomorphism $A^{G_1}\otimes A\cong A^{G_2}\otimes A$
rather than merely prove its existence.
\end{remark}

Before we turn to the proof of the theorem in the opposite direction, let us finish this section with the following observation.

\begin{proposition}\label{prop:invstates}
The canonical invariant state $\varphi_{A^{G_1}}$ on $A^{G_1}$ is
given by the composition of the $G_2$-invariant conditional
expectation $A^{G_1}\to (A^{G_1})^{G_2}=\cC_{21}(X)$ with the
normalized categorical trace~$\tr_X$ on~$\cC_{21}(X)$. Similarly for
the canonical invariant state on $A^{G_2}$. In particular, there
exists a unique $G_1$-$G_2$-invariant state $\varphi$ on~$A$ such
that its restrictions to $A^{G_1}$ and $A^{G_2}$ coincide with the
canonical invariant states~$\varphi_{A^{G_1}}$
and~$\varphi_{A^{G_2}}$, respectively.
\end{proposition}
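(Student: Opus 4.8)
The plan is to derive the first assertion from the description of the canonical invariant state in Remark~\ref{rem:altinv}, and then to obtain the last assertion by checking that $\varphi_{A^{G_1}}$ and $\varphi_{A^{G_2}}$ restrict to one and the same functional on the common fixed point subalgebra. First I would apply Remark~\ref{rem:altinv} to $A^{G_1}$, viewed as a finite dimensional right $G_2$-C$^*$-algebra. This gives $\varphi_{A^{G_1}}=\tr_{A^{G_1}}\circ E$, where $E\colon A^{G_1}\to (A^{G_1})^{G_2}$ is the $G_2$-invariant conditional expectation and $\tr_{A^{G_1}}$ is the normalized categorical trace on $\End_{\Rep G_2}(A^{G_1})$ restricted, via left multiplication, to $(A^{G_1})^{G_2}=\cC_{21}(X)$. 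Since $E$ is the identity on $\cC_{21}(X)$, the first claim reduces to showing that $\tr_{A^{G_1}}$ coincides with $\tr_X$ on $\cC_{21}(X)$.

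To prove this I would transport everything through the unitary isomorphism $\theta\colon X\bar X\to A^{G_1}$ of standard $Q$-systems in $\Rep G_2$, under which $\End_{\Rep G_2}(A^{G_1})=\cC_{22}(X\bar X)$ and $(A^{G_1})^{G_2}=\cC_{21}(X)$. Recalling that the $Q$-system product and unit are $m=d(X)^{1/2}(\iota_X\otimes R_X^*\otimes\iota_{\bar X})$ and $v=d(X)^{-1/2}\bar R_X$, a direct computation with the conjugate equations identifies the left multiplication embedding $\cC_{21}(X)\hookrightarrow\cC_{22}(X\bar X)$: the element of $(A^{G_1})^{G_2}$ corresponding to $S\in\cC_{21}(X)$ acts on $X\bar X$ exactly as $S\otimes\iota_{\bar X}$. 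Granting this, multiplicativity of the categorical trace over tensor products, which is built into the choice of standard solutions in Section~\ref{sec:conv}, yields $\Tr_{X\bar X}(S\otimes\iota_{\bar X})=\Tr_X(S)\,\Tr_{\bar X}(\iota_{\bar X})=d(X)\Tr_X(S)$, and division by $d(X\bar X)=d(X)^2$ gives $\tr_{X\bar X}(S\otimes\iota_{\bar X})=\tr_X(S)$. Hence $\tr_{A^{G_1}}=\tr_X$ on $\cC_{21}(X)$ and $\varphi_{A^{G_1}}=\tr_X\circ E$.

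The symmetric argument applies to $A^{G_2}$: using the isomorphism $\theta'\colon\bar X X\to (A^{G_2})^\opos$ of standard $Q$-systems in $\Rep G_1$, the left multiplication embedding of $\cC_{21}(X)$ becomes $T\mapsto\iota_{\bar X}\otimes T$, and the same multiplicativity computation gives $\tr_{\bar X X}(\iota_{\bar X}\otimes T)=\tr_X(T)$, whence $\varphi_{A^{G_2}}=\tr_X\circ E'$ with $E'\colon A^{G_2}\to (A^{G_2})^{G_1}$ the $G_1$-invariant conditional expectation. The crucial point is that $(A^{G_1})^{G_2}=(A^{G_2})^{G_1}=\cC_{21}(X)$, so both canonical states restrict to the very same trace $\tr_X$ on this subalgebra.

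For the final assertion I would put $\varphi=\varphi_{A^{G_1}}\circ P$, where $P=(h_{G_1}\otimes\iota)\alpha_1\colon A\to A^{G_1}$ is the $G_1$-invariant conditional expectation. Because the actions commute, $P$ is $G_2$-equivariant, so $\varphi$ is a $G_1$-$G_2$-invariant state, and it restricts to $\varphi_{A^{G_1}}$ on $A^{G_1}$ since $P$ fixes $A^{G_1}$ pointwise. Moreover $P$ restricts on $A^{G_2}$ to the $G_1$-invariant conditional expectation $E'\colon A^{G_2}\to\cC_{21}(X)$, so by the previous paragraph $\varphi|_{A^{G_2}}=\tr_X\circ E'=\varphi_{A^{G_2}}$. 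Uniqueness is immediate: any $G_1$-invariant state $\varphi'$ on $A$ satisfies $\varphi'=\varphi'\circ P$, hence is determined by its restriction to $A^{G_1}$. The step I expect to require the most care is the normalization bookkeeping in the second paragraph, namely fixing the scalar in the identification $\cC_{21}(X)\cong(A^{G_1})^{G_2}$ so that $v$ corresponds to $\iota_X$ and left multiplication becomes exactly $S\otimes\iota_{\bar X}$ with no spurious factor of $d(X)^{1/2}$; once this is correct, the consistency that makes the last assertion work is precisely the coincidence of the two restricted traces.
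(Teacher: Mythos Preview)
Your argument is correct and follows essentially the same strategy as the paper: both start from Remark~\ref{rem:altinv}, reduce the claim to showing that the normalized categorical trace $\tr_{A^{G_1}}$ restricted to $\cC_{21}(X)$ via left multiplication equals $\tr_X$, and both end up evaluating $\tr_{X\bar X}(S\otimes\iota_{\bar X})=\tr_X(S)$ by multiplicativity of the categorical trace. The difference is only in how the identification of the left-multiplication operator with $S\otimes\iota_{\bar X}$ is obtained. The paper works directly with the isotypic decomposition $A^{G_1}=\bigoplus_j\bar H_j\otimes\cC_{21}(X,V_jX)$, observes that left multiplication by $S$ acts on the $j$-th block as $T\mapsto(\iota\otimes S)T$, and then invokes Frobenius reciprocity $\cC_{21}(X,V_jX)\cong\cC_2(\bar V_j,X\bar X)$ to reassemble these blocks into $S\otimes\iota_{\bar X}$ acting on $X\bar X$; this route never touches the normalization of $\theta$. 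Your route transports the whole computation through the $Q$-system isomorphism $\theta$ in one step, which is a bit more conceptual but, as you note, requires checking that $\theta^{-1}(S)=d(X)^{-1/2}(S\otimes\iota_{\bar X})\bar R_X$ so that no spurious scalar appears; once that is verified (it follows from $\theta(v)=1$ and the conjugate equation $(\iota_X\otimes R_X^*)(\bar R_X\otimes\iota_X)=\iota_X$), the two computations coincide. The final paragraph, constructing $\varphi$ via the conditional expectation onto $A^{G_1}$ and checking the two restrictions, matches the paper's conclusion.
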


We call this $\varphi$ the \emph{canonical invariant state} on $A$.

\bp Take $S\in(A^{G_1})^{G_2}=\cC_{21}(X)$. We have to show that
$\varphi_{A^{G_1}}(S)=\tr_X(S)$. By Remark~\ref{rem:altinv} and
definition of the product in $A^{G_1}$, we have
$$
\varphi_{A^{G_1}}(S)=(\dim_q A^{G_2})^{-1} \sum_j (\dim_q V_j)(\Tr
S_j),
$$
where $S_j$ is the operator on the vector space $\cC_{21}(X,V_jX)$
given by $T\mapsto (\iota\otimes S)T$. By the Frobenius reciprocity we
can identify $\cC_{21}(X,V_jX)$ with $\cC_2(\bar V_j,X\bar X)$. In
this picture the operator $S_j$ becomes $T\mapsto (S\otimes\iota)T$.
But now $X\bar X$ is an object in $\Rep G_2$, and since it
decomposes into a direct sum of copies of $\bar V_j$, we get
$$
\sum_j (\dim_q V_j)(\Tr S_j)=\Tr_{X\bar X}(S\otimes\iota_{\bar
X})=\Tr_X(S)d(X)=\tr_X(S)d(X)^2.
$$
This implies that
$$
\varphi_{A^{G_1}}(S)=\tr_X(S)\ \ \text{and}\ \ \dim_q
A^{G_1}=d(X)^2.
$$

The statement for $\varphi_{A^{G_2}}$ is proved similarly. The last
statement of the proposition is now obvious: the unique
$G_1$-$G_2$-invariant state extending $\varphi_{A^{G_1}}$ and
$\varphi_{A^{G_2}}$ is given by the composition of the unique
$G_1$-$G_2$-invariant conditional expectation $A\to\cC_{21}(X)$ with
$\tr_X$. \ep

\subsection{From bi-Morita--Galois objects to invertible bimodule categories}

Conversely, assume that $A$ is a $G_1$-$G_2$-Morita--Galois object.
As above, we can consider $A^{G_1}$ as a standard $Q$-system in
$\Rep G_2$. Then we have an invertible $(\Rep
G_2)$-$(\bimodcatin{A^{G_1}}{G_2})$-module category
$\rmodcatin{A^{G_1}}{G_2}$. We will show that the C$^*$-tensor
category $\bimodcatin{A^{G_1}}{G_2}$ is equivalent to $\Rep G_1$ and
the bimodule category $\rmodcatin{A^{G_1}}{G_2}$ is equivalent to
$\DD_A$ in a coherent way.

\begin{lemma} \label{lem:iso}
Let $Y$ (resp.~$Y'$) be a $G_1$-$G_2$-equivariant
$A^{G_1}$-$A$-correspondence (resp.~an equivariant
$A^{G_2}$-$A$-correspondence). We then have a
$G_1$-$G_2$-equivariant unitary isomorphism
$$
Y^{G_1} \otimes Y' \cong Y'^{G_2} \otimes Y
$$
of $A^{G_1}\otimes A^{G_2}$-$A$-correspondences.
\end{lemma}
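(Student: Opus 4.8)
The plan is to exhibit an explicit isomorphism generalizing the Morita--Galois map $S$ of Proposition~\ref{prop:equivd}(i) (equivalently, of Remark~\ref{rem:assoc-isometry-rem1}), which is exactly the case $Y=Y'=A$ of the present statement, and to verify its properties from the identities~\eqref{eq:Mkey}. First I would use freeness of the two actions together with Proposition~\ref{prop:imprim} to identify $Y\cong Y^{G_1}\otimes_{A^{G_1}}A$ and $Y'\cong Y'^{G_2}\otimes_{A^{G_2}}A$ as equivariant correspondences, so that a general vector of $Y'$ is written as $\eta\otimes_{A^{G_2}}b$ with $\eta\in Y'^{G_2}$ and $b\in A$, and similarly for $Y$. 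Since $A^{G_1}$ and $A^{G_2}$ are finite dimensional, so are $Y^{G_1}$ and $Y'^{G_2}$, and we may write $m_2^*(1)=y^j\otimes y_j$ and $m_1^*(1)=x^i\otimes x_i$ with respect to the canonical invariant states $\psi_2$ and $\psi_1$. I would then define
\[
T\colon Y^{G_1}\otimes Y'\to Y'^{G_2}\otimes Y,\qquad \xi\otimes(\eta\otimes_{A^{G_2}}b)\mapsto (\eta y^j)\otimes(\xi\otimes_{A^{G_1}}y_j b),
\]
with candidate inverse $\zeta\otimes(\xi\otimes_{A^{G_1}}a)\mapsto\lambda^{-1}(\xi x^i)\otimes(\zeta\otimes_{A^{G_2}}x_i a)$; for $Y=Y'=A$ these reduce to $S$ and its inverse.

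Next I would check that $T$ is a well-defined morphism. Independence of the representative over $\otimes_{A^{G_2}}$ comes from the Frobenius identity $c\,y^j\otimes y_j=y^j\otimes y_j\,c$ for $c\in A^{G_2}$, which expresses that $m_2^*$ is an $A^{G_2}$-bimodule map. Linearity over all four module structures is then immediate from the placement of the symbols; the one delicate point is that $y^j$ must be attached to $\eta$ on the \emph{right}, so that left multiplication by $A^{G_2}$ on $\eta$ passes to left multiplication on the first leg $\eta y^j\in Y'^{G_2}$ of the target. Equivariance follows from the $G_1$-invariance of $m_2^*(1)=y^j\otimes y_j$ (as $m_2$ and the unit are morphisms in $\Rep G_1$) together with the fact that $y^j,y_j\in A^{G_2}$ are $G_2$-fixed.

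I would then verify bijectivity using~\eqref{eq:Mkey}. Computing $T^{-1}T$ on $\xi\otimes(\eta\otimes_{A^{G_2}}b)$, one moves $y^j$ across $\otimes_{A^{G_2}}$ to reach $\eta\otimes_{A^{G_2}}y^j x_i y_j b$, collapses the $j$-sum by $y^j x_i y_j=\lambda\psi_1(x_i)1$, and recovers $\xi$ from the counit identity $\psi_1(x_i)x^i=(\iota\otimes\psi_1)m_1^*(1)=1$; the computation for $TT^{-1}$ is symmetric and uses $x^i y_j x_i=\lambda\psi_2(y_j)1$. Finally, for unitarity I would put on both sides the $A$-valued inner products weighted by $\psi_1$ and $\psi_2$ exactly as in Remark~\ref{rem:assoc-isometry-rem1} and check, again by~\eqref{eq:Mkey}, that $T$ scales the inner product by $\lambda$, so that $\lambda^{-1/2}T$ is the asserted unitary; alternatively, since $Y^{G_1}$ and $Y'^{G_2}$ are finite dimensional, $T$ is an invertible morphism in a C$^*$-category of equivariant $(A^{G_1}\otimes A^{G_2})$-$A$-correspondences, and the unitary part of its polar decomposition already provides a unitary isomorphism.

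The main obstacle is purely organizational: pinning down the left/right placements of $y^j,y_j$ and of the two tensor-over subalgebras so that $T$ is simultaneously well defined over $\otimes_{A^{G_2}}$ and linear for all of the $A^{G_1}$-, $A^{G_2}$- and $A$-module structures. This is the only genuinely constrained choice; once the formula is fixed, every remaining verification is a mechanical application of~\eqref{eq:Mkey} and the Frobenius and counit identities.
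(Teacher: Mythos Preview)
Your proposal is correct and follows essentially the same route as the paper: the paper also invokes Proposition~\ref{prop:imprim} to write $Y'\cong Y'^{G_2}\otimes_{A^{G_2}}A$ and $Y\cong Y^{G_1}\otimes_{A^{G_1}}A$, defines the map $\xi\otimes\eta a\mapsto \lambda^{-1/2}\,\eta y^j\otimes \xi y_j a$ (your $T$ with the normalization built in), checks well-definedness via the $A^{G_2}$-centrality of $y^j\otimes y_j$, exhibits the inverse using the $x^i$, and verifies bijectivity and unitarity from~\eqref{eq:Mkey} exactly as you outline. The only cosmetic difference is that the paper carries out the unitarity computation explicitly rather than appealing to the polar decomposition shortcut you mention.
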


Note that in this formulation the scalar product on $Y^{G_1}$ is
defined using the $A^{G_1}$-valued inner product and the canonical
invariant state on $A^{G_1}$, and similarly for $Y'^{G_2}$.

\bp Put $\lambda=\dim_q A^{G_1}=\dim_q A^{G_2}$, and let $x_i, x^i,
y_j, y^j$ be as in Proposition~\ref{prop:equivd} (b), where we take
the canonical invariant states. Consider the map
$$
S_0\colon Y^{G_1} \otimes Y'^{G_2} \otimes \A  \to Y'^{G_2} \otimes
Y, \quad \xi \otimes \eta \otimes a \mapsto \lambda^{-1/2} \eta y^j
\otimes \xi y_j a.
$$
Then by the $A^{G_2}$-centrality of $y^j \otimes y_j$, this descends
to a map from  $Y^{G_1} \otimes Y'^{G_2} \otimes_{A^{G_2}} \A$.
Moreover, as the action of $G_2$ on $A$ is free,
Proposition~\ref{prop:imprim} implies $Y'^{G_2} \otimes_{A^{G_2}} \A
\cong \cY'$. Thus, $S_0$ induces a map
$$
S\colon Y^{G_1} \otimes \cY' \to Y'^{G_2} \otimes \cY, \quad \xi
\otimes \eta a \mapsto  \lambda^{-1/2} \eta y^j \otimes \xi y_j a.
$$
Similarly, $T(\eta \otimes \xi a) = \lambda^{-1/2} \xi x^i \otimes
\eta x_i a$ is a well-defined map from $Y'^{G_2} \otimes \cY$ to
$Y^{G_1} \otimes \cY'$.

By the above formulas, $S$ and $T$ are $A^{G_1} \otimes
A^{G_2}$-$\cA$-module morphisms. They are also equivariant with
respect to the actions of $G_1$ and $G_2$, cf.~the proof of
Proposition~\ref{prop:equivd}. It remains to show that they are
inverse to each other. When $\xi \in Y^{G_1}$, $\eta \in Y'^{G_2}$,
and $a \in \A$, we have
$$
S T (\eta \otimes \xi a) = \lambda^{-1} \eta y^j \otimes\xi x^i y_j
x_i a.
$$
Using \eqref{eq:Mkey}, the right hand side is equal to
$\varphi_{A^{G_2}}(y_j) \eta y^j \otimes \xi a = \eta \otimes \xi
a$, which shows $S T = \iota$. A similar computation shows $T S =
\iota$.

Finally, let us show that $S$ is unitary with respect to the $A$-valued inner products.
We have
$$
\langle S(\xi' \otimes \eta' a'), S(\xi \otimes \eta a) \rangle_A =
\lambda^{-1} \varphi_{A^{G_2}}(\langle \eta' y^k, \eta
y^j\rangle_{A^{G_2}}) \langle \xi' y_k a', \xi y_j a \rangle_A.
$$
Using
$$
\varphi_{A^{G_2}}(\langle \eta' y^k, \eta y^j\rangle_{A^{G_2}}) y_j
= \varphi_{A^{G_2}}(y^{k*} \langle \eta', \eta \rangle_{A^{G_2}}
y^j) y_j = y^{k*} \langle \eta', \eta \rangle_{A^{G_2}},
$$
we see that the above expression equals $\lambda^{-1} \langle \xi'
y_k a', \xi y^{k
*} \langle \eta', \eta \rangle_{A^{G_2}} a \rangle_A$. Using $y_k^*
x y^{k *} = \lambda \varphi_{A^{G_1}}(x)$ for $x \in A^{G_1}$ and
that the $A^{G_1}$-valued inner product on $Y^{G_1}$ is the
restriction of the $A$-valued one on $Y$, we arrive at
$\varphi_{A^{G_1}}(\langle \xi', \xi \rangle_{A^{G_1}}) a'^{*} a$,
which is the inner product of $\xi' \otimes \eta' a'$ and $\xi
\otimes \eta a$. This shows the unitarity of $S$. \ep

\begin{corollary} \label{cor:generation}
Any module $X\in\DD_A$ embeds into $H_U\otimes A$ for some $U\in\Rep G_1$, as well as into $H_V\otimes A$ for some $V\in\Rep G_2$.
\end{corollary}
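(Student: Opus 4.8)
The plan is to deduce this corollary directly from Lemma~\ref{lem:iso} by choosing appropriate correspondences $Y$ and $Y'$. The key point is that Lemma~\ref{lem:iso} produces an explicit unitary isomorphism $Y^{G_1}\otimes Y'\cong Y'^{G_2}\otimes Y$, and by a judicious choice of one of the two correspondences we can arrange one side to be a multiple of $H_U\otimes A$ or $H_V\otimes A$. First I would take, for a given $X\in\DD_A$, the equivariant $A^{G_1}$-$A$-correspondence $Y=A^{G_1}\otimes X$, where $A^{G_1}$ acts on the left factor and $A$ on $X$. Then $Y^{G_1}=(A^{G_1}\otimes X)^{G_1}$; since the $G_1$-action on the $A^{G_1}$-factor is trivial and the module $X$ carries the full equivariant structure, the invariants pick out $X$ itself (up to the canonical identification), so that $Y^{G_1}\cong X$ as an equivariant $A$-module.

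With this choice of $Y$, applying Lemma~\ref{lem:iso} to any equivariant $A^{G_2}$-$A$-correspondence $Y'$ gives a unitary isomorphism $X\otimes Y'\cong Y'^{G_2}\otimes(A^{G_1}\otimes X)$; but this is not quite the cleanest route, so instead I would apply the lemma in the symmetric roles: take $Y'$ to be the correspondence $A^{G_1}\otimes X$ viewed suitably, and let $Y$ range so that $Y^{G_1}$ becomes a Hilbert space tensored with $A$. The more direct approach is to recall from~\eqref{eq:A-G2-and-A-G1-cat-pres} and the discussion of the standard $Q$-systems that $A^{G_1}$ and $A^{G_2}$ are \emph{finite dimensional} (Proposition~\ref{prop:equivd}(a)), so $A^{G_1}$ is the underlying space of a finite dimensional representation $U\in\Rep G_1$ and $A^{G_2}$ of some $V\in\Rep G_2$. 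Then setting $Y'=A^{G_1}\otimes A$ (an equivariant $A^{G_1}$-$A$-correspondence with $Y'^{G_1}=A$) and $Y=A^{G_1}\otimes X$ in Lemma~\ref{lem:iso}, the left-hand side collapses to $X\otimes A\cong X$, while the right-hand side becomes a summand of $H_U\otimes A$ since $A^{G_1}\cong H_U$ as a $G_1$-representation. Running the argument with the roles of $G_1$ and $G_2$ interchanged—using $A^{G_2}\cong H_V$ in $\Rep G_2$—gives the embedding into $H_V\otimes A$.

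The embedding itself comes for free once the isomorphism is in place: Lemma~\ref{lem:iso} yields a unitary, and since $H_U\otimes A$ decomposes as a finite direct sum indexed by a basis of $A^{G_1}$, the image of $X$ sits as a direct summand, hence $X$ embeds isometrically and equivariantly into $H_U\otimes A$. \textbf{The main obstacle} will be bookkeeping the identifications $Y^{G_1}\cong X$ and $A^{G_1}\cong H_U$ compatibly with the canonical invariant state used to define the scalar product on the invariants, so that the resulting map is genuinely a morphism in $\DD_A$ (an adjointable isometry respecting both comodule structures) rather than merely a linear injection. Concretely, one must check that the finite-dimensionality of $A^{G_1}$ promotes $A^{G_1}$ to an honest object $U$ of $\Rep G_1$ via the $G_1$-action commuting with $\alpha_2$, and that the $A$-valued inner product induced on $Y^{G_1}\otimes Y'$ through Lemma~\ref{lem:iso} matches the one on $H_U\otimes A$; both are routine given the explicit formula for $S$ in that lemma, but they require care with the normalizing factor $\lambda^{-1/2}$ and the canonical states $\varphi_{A^{G_i}}$.
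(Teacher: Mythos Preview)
Your proposal contains a genuine error in the computation of invariants. You claim that for $Y=A^{G_1}\otimes X$ one has $Y^{G_1}\cong X$, but this is false: since $G_1$ acts trivially on the factor $A^{G_1}$, we get $(A^{G_1}\otimes X)^{G_1}=A^{G_1}\otimes X^{G_1}$, not $X$. The same mistake recurs when you write ``$Y'=A^{G_1}\otimes A$ \ldots\ with $Y'^{G_1}=A$'': in fact $(A^{G_1}\otimes A)^{G_1}=A^{G_1}\otimes A^{G_1}$. There is also a confusion of roles: you describe $A^{G_1}$ as ``the underlying space of a finite dimensional representation $U\in\Rep G_1$'', but $A^{G_1}$ is by definition $G_1$-invariant, hence a \emph{trivial} $G_1$-module; it is $A^{G_2}$ that carries a genuine $G_1$-action. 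With $U$ trivial, $H_U\otimes A$ is just a multiple of $A$, and an arbitrary $X\in\DD_A$ need not embed there.

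The missing idea is the starting observation that any $X\in\DD_A$ already embeds into $H_W\otimes H_V\otimes A$ for some $W\in\Rep G_1$ and $V\in\Rep G_2$ (this is the defining property of $\DD_A$ as a module category generated by $A$). The paper then applies Lemma~\ref{lem:iso} with $Y=A$ and $Y'=H_V\otimes A$ to obtain $A^{G_1}\otimes H_V\otimes A\cong (H_V\otimes A)^{G_2}\otimes A$; since $H_V$ embeds into $A^{G_1}\otimes H_V$ via $1\otimes\iota$, one gets $X\hookrightarrow H_W\otimes(H_V\otimes A)^{G_2}\otimes A$. The point is that $(H_V\otimes A)^{G_2}$ is a finite dimensional $G_1$-module, so $H_W\otimes(H_V\otimes A)^{G_2}=H_U$ for some $U\in\Rep G_1$. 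In short, Lemma~\ref{lem:iso} is used to \emph{convert} the $G_2$-representation $H_V$ into a $G_1$-representation $(H_V\otimes A)^{G_2}$, not to manufacture $X$ directly from invariants.
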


\bp Any $X\in\DD_A$ embeds into $H_W\otimes H_V\otimes A$ for some
$W\in\Rep G_1$ and $V\in\Rep G_2$. Moreover, $H_V$ embeds into
$A^{G_1} \otimes H_V $, and the above lemma implies $A^{G_1} \otimes
H_V \otimes A \cong (H_V \otimes A)^{G_2} \otimes A$. Thus, $X$
embeds into $H_W\otimes (H_V\otimes A)^{G_2}\otimes A$, which proves
the first statement. The second is proved similarly. \ep

Consider now the spectral functor
$$
F\colon\Rep G_1\to\bimodcat{A^{G_1}},\ \ U\mapsto (H_U\otimes A)^{G_1},
$$
defined by the action of $G_1$ on $A$. Since the action is free, it is a unitary tensor functor, with the tensor structure given by
$$
F_2\colon (H_U\otimes A)^{G_1}\otimes_{A^{G_1}}(H_V\otimes A)^{G_1}\to (H_{UV}\otimes A)^{G_1},\ \ (\xi\otimes a)\otimes(\zeta\otimes b)\mapsto (\xi\otimes\zeta)\otimes ab.
$$
Clearly we can view $F$ as a unitary tensor functor $\Rep G_1\to\bimodcatin{A^{G_1}}{G_2}$.

\begin{proposition}
The functor $F\colon\Rep G_1\to\bimodcatin{A^{G_1}}{G_2}$ is an equivalence of categories.
\end{proposition}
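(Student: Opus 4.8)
The plan is to prove that the unitary tensor functor $F$ is fully faithful and essentially surjective. Faithfulness is immediate from freeness of the action of $G_1$: applying the monoidal functor $F$ to a solution of the conjugate equations for $U$ shows $F(U)\ne0$ for every $U$, since the nonzero scalar morphism $\bar R_U^*\bar R_U\colon\un\to\un$ factors through $F(U)\otimes_{A^{G_1}}\overline{F(U)}$; hence $F$ is injective on the morphism spaces between simple objects and so faithful. The substantial points are fullness and essential surjectivity, and in both the real input is the Morita--Galois condition, entering through Proposition~\ref{prop:equivd} and Lemma~\ref{lem:iso}.

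For fullness, $F$ is a unitary tensor functor between rigid categories, so it preserves duals, $F(\bar U)\cong\overline{F(U)}$, and by Frobenius reciprocity on both sides it suffices to show that $\Rep G_1(\un,W)\to\bimodcatin{A^{G_1}}{G_2}(A^{G_1},F(W))$ is onto for every $W$, where $A^{G_1}=F(\un)$ is the unit. A morphism $A^{G_1}\to F(W)$ of equivariant bimodules is determined by the image $z$ of $1$, which must be a $G_2$-invariant element of $F(W)=(H_W\otimes A)^{G_1}$ that is central for the $A^{G_1}$-bimodule structure. Writing $z=\sum_k\xi_k\otimes a_k$ with the $\xi_k$ linearly independent, centrality forces each $a_k\in(A^{G_1})'\cap A$ while $G_2$-invariance forces each $a_k\in A^{G_2}$; since $(A^{G_1})'\cap A^{G_2}=\C1$ by Proposition~\ref{prop:equivd}(iii), we get $z=\xi\otimes1$. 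The remaining condition $z\in(H_W\otimes A)^{G_1}$ says precisely that $\xi\in H_W$ is $G_1$-invariant, i.e.\ $\xi\in\Rep G_1(\un,W)$, and then $z=F(\xi)(1)$. Thus the map is onto, and $F$ is fully faithful.

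Because $F$ is fully faithful and everything is semisimple, its essential image is closed under subobjects: a summand of $F(U)$ corresponds to a projection in $\End(F(U))\cong\End(U)$ and is therefore again of the form $F(\cdot)$. So essential surjectivity will follow once every $M\in\bimodcatin{A^{G_1}}{G_2}$ is shown to embed into some $F(W)$. I would pass to the equivalent category of $G_1$-$G_2$-equivariant $A^{G_1}$-$A$-correspondences through the mutually inverse functors $Y\mapsto Y^{G_1}$ and $N\mapsto N\otimes_{A^{G_1}}A$, which are equivalences by freeness of $G_1$ (Proposition~\ref{prop:imprim}) and under which $F(W)$ corresponds to $H_W\otimes A$. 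Embedding $M$ as a summand of a free equivariant bimodule $A^{G_1}\otimes Z\otimes A^{G_1}$ with $Z\in\Rep G_2$ (using separability of $A^{G_1}$) and tensoring with $A$ gives an embedding of $M\otimes_{A^{G_1}}A$ into $A^{G_1}\otimes Z\otimes A$. Now Lemma~\ref{lem:iso} applied with $Y=A$ and $Y'=Z\otimes A$ yields a $G_1$-$G_2$-equivariant isomorphism $A^{G_1}\otimes Z\otimes A\cong(Z\otimes A)^{G_2}\otimes A$ of $A^{G_1}$-$A$-correspondences, whose right-hand side is exactly $H_W\otimes A$ with $W=(Z\otimes A)^{G_2}\in\Rep G_1$. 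Applying $(-)^{G_1}$ transports this to an embedding $M\hookrightarrow F(W)$, so $M$ lies in the essential image and $F$ is an equivalence.

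The main obstacle is precisely this last step. The generation available from Corollary~\ref{cor:generation} produces embeddings of right Hilbert $A$-modules, which ignore the left $A^{G_1}$-action, whereas essential surjectivity demands a \emph{bimodule} embedding $M\hookrightarrow F(W)$. The device above --- realizing $M$ inside a free bimodule so that its left $A^{G_1}$-action sits on a separate tensor leg, and then using Lemma~\ref{lem:iso} to rotate that leg into the $A$-factor of an $H_W\otimes A$ --- is what converts the left-module data into the required form, and it is here that the Morita--Galois hypothesis is genuinely used.
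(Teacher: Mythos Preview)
Your proof is correct and follows essentially the same approach as the paper: fullness via the identification of $\Mor(A^{G_1},F(W))$ with $G_2$-invariant $A^{G_1}$-central vectors and the triviality of $(A^{G_1})'\cap A^{G_2}$ from Proposition~\ref{prop:equivd}(iii), and essential surjectivity by showing every free bimodule $A^{G_1}\otimes Z\otimes A^{G_1}$ is isomorphic to $F((Z\otimes A)^{G_2})$ via Lemma~\ref{lem:iso}. The paper carries out the latter computation directly on the bimodule side, writing $A^{G_1}\otimes H_V\otimes A^{G_1}\cong((H_V\otimes A)^{G_2}\otimes A)^{G_1}=F((H_V\otimes A)^{G_2})$, whereas you route it through the equivalence with $A^{G_1}$-$A$-correspondences; these are the same calculation.
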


\bp
By Lemma~\ref{lem:iso}, for any $V\in\Rep G_2$, we have a $G_2$-equivariant isomorphism
$$
A^{G_1}\otimes H_V\otimes A^{G_1}\cong ((H_V\otimes A)^{G_2}\otimes A)^{G_1}=F((H_V\otimes A)^{G_2})
$$
of $A^{G_1}$-bimodules. This shows that the functor $F$ is dominant, that is, any object of $\bimodcatin{A^{G_1}}{G_2}$ is a subobject of the image of an object of $\Rep G_1$. Since $F$ is also faithful, it remains to show that $F$ is full. It suffices to check that the map
\begin{equation}\label{eq:surjF}
F\colon\Mor(\un,U)\to\Mor_{\bimodcatin{A^{G_1}}{G_2}}(A^{G_1},(H_U\otimes
A)^{G_1})
\end{equation}
is surjective for any $U\in\Rep G_1$. The morphism space on the
right can be identified with the space of $G_2$-invariant
$A^{G_1}$-central vectors in $(H_U\otimes A)^{G_1}$. Since
$$
((H_U\otimes A)^{G_1})^{G_2}=(H_U\otimes A^{G_2})^{G_1},
$$
this space coincides with
$$
(H_U\otimes ((A^{G_1})'\cap A^{G_2}))^{G_1}=(H_U\otimes\C1)^{G_1}=H_U^{G_1}\otimes\C1,
$$
where we used that $(A^{G_1})'\cap A^{G_2}=\C1$ by Proposition~\ref{prop:equivd}(iii). This shows that the map \eqref{eq:surjF} is indeed surjective.
\ep

Consequently, we can view $\rmodcatin{A^{G_1}}{G_2}$ as an invertible $(\Rep G_2)$-$(\Rep G_1)$-module category. Namely, for $X\in\rmodcatin{A^{G_1}}{G_2}$ and $U\in\Rep G_1$, we have
$$
XU=X\otimes_{A^{G_1}}(H_U\otimes A)^{G_1}.
$$
In order to complete the proof of Theorem~\ref{thm:main} it remains
to establish the following.

\begin{lemma}
The $(\Rep G_2)$-$(\Rep G_1)$-module categories $\DD_A$ and $\rmodcatin{A^{G_1}}{G_2}$ are equivalent.
\end{lemma}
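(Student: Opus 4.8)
The plan is to show that the fixed-point functor $E\colon\DD_A\to\rmodcatin{A^{G_1}}{G_2}$, $Y\mapsto Y^{G_1}$, carries a natural module structure making it a unitary equivalence of $(\Rep G_2)$-$(\Rep G_1)$-module categories; this is the analogue of Lemma~\ref{lem:fixed-iso}, but now obtained from the Morita--Galois data rather than from invertibility. First I would check that $E$ is a well-defined equivalence of the underlying C$^*$-categories. Since the action of $G_1$ is free and $A^{G_1}$ is finite dimensional, for $Y\in\DD_A$ the module $Y^{G_1}$ is a finitely generated $G_2$-equivariant right Hilbert $A^{G_1}$-module by \cite{MR3141721}*{Corollary~4.2(2)}, with $A^{G_1}$-valued inner product the restriction of the $A$-valued one, so $E$ indeed lands in $\rmodcatin{A^{G_1}}{G_2}$. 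A candidate inverse is $E'\colon Z\mapsto Z\otimes_{A^{G_1}}A$, which makes sense because $Z$ is finitely generated projective over the finite-dimensional algebra $A^{G_1}$. The isomorphism $E'E\cong\Id$ is precisely the freeness statement $Y^{G_1}\otimes_{A^{G_1}}A\cong Y$ of Proposition~\ref{prop:imprim}, while $EE'\cong\Id$ follows from $(Z\otimes_{A^{G_1}}A)^{G_1}=Z\otimes_{A^{G_1}}A^{G_1}=Z$, the $G_1$-action being carried entirely by $A$.

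The left $(\Rep G_2)$-module structure of $E$ is strict: as $G_1$ acts trivially on $H_V$ for $V\in\Rep G_2$, we have $E(H_V\otimes Y)=(H_V\otimes Y)^{G_1}=H_V\otimes Y^{G_1}$. The right $(\Rep G_1)$-module structure is the substance of the lemma. Recall that the action of $U\in\Rep G_1$ on $\rmodcatin{A^{G_1}}{G_2}$ is $Z\mapsto Z\otimes_{A^{G_1}}F(U)$ with $F(U)=(H_U\otimes A)^{G_1}$, whereas on $\DD_A$ it is $Y\mapsto H_U\otimes Y$. I would construct the required natural unitary isomorphism $\theta_{Y,U}\colon E(Y)\otimes_{A^{G_1}}F(U)\to (H_U\otimes Y)^{G_1}$ as follows: combining the freeness isomorphism $Y\cong Y^{G_1}\otimes_{A^{G_1}}A$ with the identity $H_U\otimes(Y^{G_1}\otimes_{A^{G_1}}A)=Y^{G_1}\otimes_{A^{G_1}}(H_U\otimes A)$ and then passing to $G_1$-fixed points yields $(H_U\otimes Y)^{G_1}\cong Y^{G_1}\otimes_{A^{G_1}}(H_U\otimes A)^{G_1}$. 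The key point here is that, because $Y^{G_1}$ is finitely generated projective over the finite-dimensional algebra $A^{G_1}$ and carries the trivial $G_1$-action, it is a direct summand of a free $G_1$-trivial module and hence taking $G_1$-fixed points commutes with $Y^{G_1}\otimes_{A^{G_1}}(-)$. On elementary tensors $\theta_{Y,U}$ is $\xi\otimes(\eta\otimes a)\mapsto\eta\otimes\xi a$, and one verifies directly that it is balanced over $A^{G_1}$, right $A^{G_1}$-linear, and $G_2$-equivariant; its unitarity follows from the fact that freeness makes the localized Galois maps, and therefore all the identifications above, unitary.

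The main obstacle is the module-functor coherence condition, expressing that $\theta$ intertwines the associativity constraints of the two right $(\Rep G_1)$-actions. On the target side this constraint is transported along the monoidal equivalence $F\colon\Rep G_1\to\bimodcatin{A^{G_1}}{G_2}$, so it is governed by the tensor isomorphisms $F_2\colon F(U)\otimes_{A^{G_1}}F(V)\to F(UV)$, which are unitary precisely because the $G_1$-action is free. The verification then reduces to a diagram chase showing that the freeness isomorphisms $F(U)\otimes_{A^{G_1}}A\cong H_U\otimes A$ intertwine $F_2$ with the multiplication in $A$, so that $\theta_{Y,UV}$ agrees with the iterated application of $\theta_{Y,U}$ and $\theta_{H_U\otimes Y,V}$; on elementary tensors this is routine but must be carried out with care to keep the $G_2$-equivariance and the left and right $A^{G_1}$-actions aligned throughout. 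Once this coherence is established, $E$ together with $\theta$ is a unitary equivalence of $(\Rep G_2)$-$(\Rep G_1)$-module categories, completing the proof of the lemma and hence of Theorem~\ref{thm:main}.
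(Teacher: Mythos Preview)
Your proposal is correct and follows essentially the same approach as the paper: the same fixed-point functor $E$, the same strict $(\Rep G_2)$-structure, and the same formula $\theta_{Y,U}(x\otimes(\zeta\otimes a))=\zeta\otimes xa$ for the right $(\Rep G_1)$-module structure. The only noteworthy difference is in how unitarity of $\theta$ is established: the paper checks that $\theta$ is isometric and then obtains surjectivity by reducing to the generators $X=H_W\otimes A$ via Corollary~\ref{cor:generation} (which uses Lemma~\ref{lem:iso}, hence the full Morita--Galois data), whereas you build $\theta$ as a composition of the freeness isomorphism $Y\cong Y^{G_1}\otimes_{A^{G_1}}A$ with the observation that $(-)^{G_1}$ commutes with $Y^{G_1}\otimes_{A^{G_1}}(-)$ because $Y^{G_1}$ is finitely generated projective over $A^{G_1}$. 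Your route is slightly more economical in that it only needs freeness of the $G_1$-action for this particular step, while the paper's route is shorter to write down. The coherence verification, which the paper simply declares ``straightforward'', is handled by you exactly as it should be, by reducing it to compatibility of the freeness isomorphisms with $F_2$.
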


\bp
By Proposition~\ref{prop:imprim} we have an equivalence of C$^*$-categories $E\colon\DD_A\to\rmodcatin{A^{G_1}}{G_2}$ given by $E(X)=X^{G_1}$. We want to enrich it to an equivalence of module categories. For this we have to define natural unitary isomorphisms
$$
\theta_{V,X,U}\colon V(E(X)U)=H_V\otimes X^{G_1}\otimes_{A^{G_1}}(H_U\otimes A)^{G_1}\to E(V(XU))=H_V\otimes (H_U\otimes X)^{G_1}
$$
in $\rmodcatin{A^{G_1}}{G_2}$ for $U\in\Rep G_1$, $X\in\DD_A$, $V\in\Rep G_2$. We define them by
$$
\theta_{V,X,U}(\xi\otimes x\otimes (\zeta\otimes a))=\xi\otimes(\zeta\otimes x a).
$$
It is clear that this is a $G_2$-equivariant morphism of right
$A^{G_1}$-modules. It is also easy to check that $\theta_{V,X,U}$ is
isometric. In order to check that such morphisms are unitary it
suffices to consider modules of the form $X=H_W\otimes A$ for
$W\in\Rep G_2$, since any object in $\DD_A$ is a subobject of such a
module by Corollary~\ref{cor:generation}. But for such modules the
statement is obvious. It is then straightforward to check that
$(E,\theta)$ is an equivalence of $(\Rep G_2)$-$(\Rep G_1)$-modules
categories. \ep

\subsection{Fiber functors on categories of bimodules}

In the previous sections we have developed an analogue of the bi-Hopf--Galois theory for categorically Morita equivalent compact quantum groups. We now turn to an analogue of the correspondence between fiber functors and Hopf--Galois objects.

\begin{definition}\label{def:MG}
For a compact quantum group $G$ and a finite dimensional simple
right $G$-C$^*$-algebra $B$, a $G$-\emph{Morita--Galois object} for
$B$ is a unital C$^*$-algebra $A$ together a free action
$\alpha\colon A\to A\otimes C(G)$, and a $G$-equivariant embedding
$B\hookrightarrow A$ such that there is a $G$-equivariant
isomorphism
$$
A^G\otimes\A\cong B\otimes\A
$$
of $A^{G}\otimes B$-$\A$-modules that maps $1\otimes 1\in A^G\otimes \A$ into an element of $B\otimes B\subset B\otimes\A$.
\end{definition}

Similarly to Proposition~\ref{prop:equivd}, existence of an
isomorphism as in the above definition can be reformulated as
follows.

\begin{lemma} \label{lem:equivd-1side}
Assume we are given a right action $\alpha\colon A\to A\otimes C(G)$ of a compact quantum group $G$ on a unital C$^*$-algebra $A$ and a finite dimensional invariant unital C$^*$-subalgebra $B\subset A$. Then a $G$-equivariant isomorphism $A^{G}\otimes\A\cong B\otimes\A$ of $A^G\otimes B$-$\A$-modules, mapping $1\otimes 1$ into an element of $B\otimes B$, exists if and only if the following conditions hold:
\begin{itemize}
\item[(a)] the fixed point algebra $A^{G}$ is finite dimensional;
\item[(b)] there exist a faithful $G$-invariant state $\psi_B$ on $B$ and a faithful state $\psi_{A^G}$ on $A^{G}$ such that if $m^*_B(1)=x^i\otimes x_i$ with respect to $\psi_B$ and $m^*_{A^G}(1)=y^j\otimes y_j$ with respect to $\psi_{A^G}$, then
$$
x^i y^j x_i\otimes y_j=\lambda 1\otimes1\ \ \text{and}\ \ y^j x^i y_j\otimes x_i=\lambda1\otimes1
$$
for a nonzero scalar $\lambda$.
\end{itemize}
Furthermore, if these conditions are satisfied, then
\begin{itemize}
\item[(i)] the map $A^{G}\otimes\A\to B\otimes\A,\ \ a\otimes c\mapsto x^i\otimes a x_i c$, is a $G$-equivariant isomorphism of $A^{G}\otimes B$-$\A$-modules, with the inverse given by $e\otimes f\mapsto \lambda^{-1} y^j\otimes e y_j f$;
\item[(ii)] as the state $\psi_B$ we can take the canonical $G$-invariant state $\varphi_B$ on $B$, in which case $\lambda=\dim_q B$;
\item[(iii)] the relative commutants $(A^{G})'\cap B$ and $B'\cap A^{G}$ are trivial; in particular, $B$ is a simple $G$-C$^*$-algebra.
\end{itemize}
\end{lemma}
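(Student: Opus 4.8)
The plan is to transcribe the proof of Proposition~\ref{prop:equivd}, the only structural novelty being that $B$ is a prescribed $G$-invariant subalgebra on which $G$ acts \emph{nontrivially}, whereas $A^G$ carries the trivial action. For the direction from the isomorphism to (a) and (b), I would start from a $G$-equivariant $A^G\otimes B$-$\A$-module isomorphism $T\colon A^G\otimes\A\to B\otimes\A$ with $T(1\otimes 1)\in B\otimes B$. Writing $T(1\otimes 1)=z^k\otimes z_k$ with $z^k,z_k\in B$, left $A^G$-linearity and right $\A$-linearity force $T(a\otimes c)=z^k\otimes a z_k c$. Here lies the one genuine difference from Proposition~\ref{prop:equivd}: there the containment $T(1\otimes 1)\in A^{G_2}\otimes A^{G_2}$ was automatic from $G_2$-equivariance because $A^{G_2}$ is $G_2$-fixed, while here $G$-equivariance only gives $T(1\otimes 1)\in(B\otimes\A)^G\neq B\otimes B$; this is precisely why the clause ``$1\otimes 1$ is mapped into $B\otimes B$'' is built into Definition~\ref{def:MG}.

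Next I would extract finite dimensionality and the key identities. Surjectivity of $T$ forces the linearly independent $z^k$ to be a basis of $B$, and left $B$-linearity makes $z^k\otimes z_k$ a central vector of the $B$-bimodule $B\otimes B$. To prove (a) --- the finite dimensionality of $A^G$, which is \emph{not} assumed --- I would pass to $T^{-1}$: its value on $1\otimes 1$ is $G$-invariant in $A^G\otimes\A$, and since $A^G\subseteq\A$ is $G$-fixed this invariant subspace is $A^G\otimes A^G$, so $T^{-1}(1\otimes 1)=w^l\otimes w_l$ with finitely many $w^l$ which, again by surjectivity, span $A^G$. For (b) I would fix a faithful $G$-invariant state $\psi_B$ on $B$, put $m_B^*(1)=x^i\otimes x_i$, and introduce the map $S$ of (i); the parametrization of central vectors of $B\otimes B$ shows $S$ differs from $T$ by right multiplication by an invertible element of $B$, hence is again an isomorphism, and it is $G$-equivariant precisely because $m_B^*(1)$ is $G$-invariant. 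Reading off the form of $S^{-1}$ produces a faithful (automatically $G$-invariant) state $\psi_{A^G}$ on $A^G$ with $m_{A^G}^*(1)=y^j\otimes y_j$, and evaluating $S^{-1}S$ and $SS^{-1}$ on $1\otimes 1$ gives, in the equivalent form of Remark~\ref{rem:mcb}, the identities
\[
x^i y x_i=\lambda\,\psi_{A^G}(y)1\quad(y\in A^G),\qquad y^j x y_j=\lambda\,\psi_B(x)1\quad(x\in B).
\]
This also establishes (i).

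The converse is then immediate: assuming (a) and (b), the two maps of (i) are well defined, send $1\otimes 1$ into $B\otimes B$, and the displayed identities show directly that they are mutually inverse $G$-equivariant $A^G\otimes B$-$\A$-module maps. For (ii), taking $\psi_B=\varphi_B$ gives $x^i x_i=(\dim_q B)1$, while $y=1$ in the first identity gives $x^i x_i=\lambda 1$, so $\lambda=\dim_q B$. For (iii), specializing $y=1$ and $x=1$ yields $x^i x_i=\lambda 1$ and $y^j y_j=\lambda 1$; then for $y\in B'\cap A^G$ one computes $\lambda\psi_{A^G}(y)1=x^i y x_i=x^i x_i y=\lambda y$, so $y$ is scalar, and symmetrically $(A^G)'\cap B=\C1$. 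Finally a $G$-invariant central element $c$ of $B$ lies in $B^G\subseteq A^G$ and commutes with $B$, so $c\in B'\cap A^G=\C1$, giving the simplicity of $B$ as a $G$-C$^*$-algebra.

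The main obstacle is conceptual rather than computational: recognizing that, unlike in Proposition~\ref{prop:equivd}, equivariance no longer pins the image of $1\otimes 1$ inside $B\otimes B$, so this must be imposed as a hypothesis, and that the finite dimensionality of $A^G$ must be recovered from the inverse map. Once these two points are in place, the remaining arguments are a faithful transcription of the proof of Proposition~\ref{prop:equivd}.
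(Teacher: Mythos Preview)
Your proposal is correct and matches the paper's intended argument: the paper gives no separate proof for this lemma, merely stating that it is proved ``similarly to Proposition~\ref{prop:equivd}''. You have correctly identified the two points where the transcription is not literal---that the hypothesis $T(1\otimes1)\in B\otimes B$ replaces what was automatic from $G_2$-equivariance in Proposition~\ref{prop:equivd}, and that finite dimensionality of $A^G$ is obtained from $T^{-1}(1\otimes1)\in(A^G\otimes\A)^G=A^G\otimes A^G$ rather than being given---and your handling of (i)--(iii) follows the original proof faithfully.
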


Let us also note that we have the following analogue of Lemma~\ref{lem:iso}, with identical proof.

\begin{lemma}\label{lem:exchange}
Let $Y \in \bimodcatin{B}{G}$ and $Y'$ be a $G$-equivariant
$A^{G}$-$A$-correspondence. Then we have a $G$-equivariant
isomorphism
$$
Y \otimes Y' \cong Y'^{G} \otimes (Y \otimes_B A)
$$
of $A^G \otimes B$-$A$-correspondences. In particular, for any $V\in
\Rep G$, we have a $G$-equivariant isomorphism
$$
B\otimes H_V\otimes A\cong (H_V\otimes A)^{G}\otimes A
$$
of $B\otimes A^{G}$-$A$-correspondences.
\end{lemma}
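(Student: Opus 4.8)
As the statement advertises, the plan is to transcribe the proof of Lemma~\ref{lem:iso} to the one-sided situation, the only genuinely new feature being the bookkeeping of left and right $B$-actions. First I would invoke Lemma~\ref{lem:equivd-1side}: taking the canonical invariant state $\varphi_B$ on $B$ and a faithful state $\psi_{A^G}$ on $A^G$, and writing $m_B^*(1)=x^i\otimes x_i$ and $m_{A^G}^*(1)=y^j\otimes y_j$, one has $\lambda=\dim_q B$ together with the identities $x^iyx_i=\lambda\psi_{A^G}(y)1$ for $y\in A^G$ and $y^jxy_j=\lambda\varphi_B(x)1$ for $x\in B$. Since $G$ acts freely on $A$, Proposition~\ref{prop:imprim} identifies $\cY'$ with $Y'^G\otimes_{A^G}\A$, so that a regular vector of $Y'$ may be written as a sum of vectors $\eta a$ with $\eta\in Y'^G$ and $a\in\A$.

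With this notation I would define the candidate isomorphism and its inverse by
\[
S\colon Y\otimes\cY'\to Y'^G\otimes(Y\otimes_B\A),\qquad w\otimes\eta a\mapsto\lambda^{-1/2}\,\eta y^j\otimes(w\otimes_B y_j a),
\]
\[
T\colon Y'^G\otimes(Y\otimes_B\A)\to Y\otimes\cY',\qquad \eta\otimes(w\otimes_B c)\mapsto\lambda^{-1/2}\,(w x^i)\otimes\eta x_i c.
\]
The map $S$ descends along $\otimes_{A^G}$ because $y^j\otimes y_j$ is central in the $A^G$-bimodule $A^G\otimes A^G$, and $T$ is well defined on $Y\otimes_B\A$ because $x^i\otimes x_i$ is central in the $B$-bimodule $B\otimes B$. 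Here lies the one point that must be handled with care, and which I expect to be the main (if modest) obstacle: the relative tensor product $Y\otimes_B A$ is formed from the \emph{right} $B$-module structure of $Y$, so in $T$ the element $x^i$ has to act on $w$ from the right; with the opposite placement the $B$-centrality relation no longer makes $T$ descend to $\otimes_B$. Both $S$ and $T$ are visibly morphisms of $A^G\otimes B$-$\A$-modules, and they are $G$-equivariant because $x^i\otimes x_i$ and $y^j\otimes y_j$ are $G$-invariant (the former since $m_B$ is equivariant and $\varphi_B$ invariant, the latter since $G$ acts trivially on $A^G$).

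It then remains to check that $S$ and $T$ are mutually inverse and that $S$ is isometric. For $TS=\iota$ one computes $T(S(w\otimes\eta a))=\lambda^{-1}\,(w x^i)\otimes\eta\,(y^j x_i y_j)\,a$ and applies $y^jx_iy_j=\lambda\varphi_B(x_i)1$ followed by the counit relation $\varphi_B(x_i)x^i=1$; the computation of $ST=\iota$ is symmetric, using first that $w x^i\otimes_B y_j x_i c=w\otimes_B x^i y_j x_i c$, then $x^iy_jx_i=\lambda\psi_{A^G}(y_j)1$ and $\psi_{A^G}(y_j)y^j=1$. Finally, equipping $Y$ and $Y'^G$ with the scalar products induced by $\varphi_B$ and $\psi_{A^G}$, one verifies that $S$ preserves the $A$-valued inner products by the same calculation as in Lemma~\ref{lem:iso}, where one arranges $y^k=y_k^*$; this yields the asserted $G$-equivariant unitary isomorphism. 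The displayed special case is obtained by taking $Y=B$, for which $Y\otimes_B A=A$, so that $Y\otimes Y'=B\otimes H_V\otimes A$ and $Y'^G\otimes(Y\otimes_B A)=(H_V\otimes A)^G\otimes A$.
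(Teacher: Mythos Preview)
Your proof is correct and follows exactly the approach the paper intends: the paper gives no separate argument for this lemma, stating only that it has ``identical proof'' to Lemma~\ref{lem:iso}, and your adaptation of that proof---with the maps $S$ and $T$, the use of Lemma~\ref{lem:equivd-1side} and Proposition~\ref{prop:imprim}, and the careful placement of $x^i$ on the right of $w$ so that $B$-centrality makes $T$ descend to $\otimes_B$---is precisely the needed transcription. The only point one might add is that the special case also requires the choice $Y'=H_V\otimes A$, but this is implicit in your last sentence.
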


We then have the following result.

\begin{theorem}
\label{thm:one-sided-Morita-Galois}
For any reduced compact quantum group $G$ and any finite dimensional simple
right $G$-C$^*$-algebra~$B$, there is a one-to-one correspondence
between the isomorphism classes of $G$-Morita--Galois objects for
$B$ and the isomorphism classes of unitary fiber functors
$\bimodcatin{B}{G}\to\Hilbf$.
\end{theorem}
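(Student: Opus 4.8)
The plan is to recognize this statement as the \emph{one-sided} (Hopf--Galois) shadow of Theorem~\ref{thm:main}, and to reduce it to that theorem together with Woronowicz's Tannaka--Krein reconstruction theorem, which identifies unitary fiber functors on a rigid C$^*$-tensor category with reduced compact quantum groups having that category as representation category. Throughout, $\cC=\bimodcatin{B}{G}$ is rigid because $B$ is a simple $Q$-system in $\Rep G$, via the $2$-category of Section~\ref{sec:Q}.

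First I would treat the direction from a fiber functor to a Morita--Galois object, which is the more structural one. Given a unitary fiber functor $E\colon\bimodcatin{B}{G}\to\Hilbf$, Woronowicz's theorem produces a reduced compact quantum group $G_1$ and a unitary monoidal equivalence $\Psi\colon\bimodcatin{B}{G}\to\Rep G_1$ with $E$ naturally isomorphic to the composition of $\Psi$ with the forgetful functor $\Rep G_1\to\Hilbf$. Using the rigid C$^*$-$2$-category of Section~\ref{sec:Q} attached to $B$, the category $\rmodcatin{B}{G}$ is an invertible $(\Rep G)$-$\bimodcatin{B}{G}$-module category with distinguished generating object $X=B$. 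Transporting the right action along $\Psi$ makes it an invertible $(\Rep G)$-$(\Rep G_1)$-module category, so Theorem~\ref{thm:main} together with the Tannaka--Krein correspondence of Section~\ref{sec:TK} reconstructs a $G_1$-$G$-Morita--Galois object $A$ from this category and the object $X$. Since $X\bar X=B\otimes_B B=B$, the reconstruction of Theorem~\ref{thm:main} identifies $A^{G_1}$ with $B$ canonically; forgetting the $G_1$-action and retaining the embedding $B=A^{G_1}\hookrightarrow A$ then yields a $G$-Morita--Galois object for $B$ in the sense of Definition~\ref{def:MG}, the isomorphism $A^G\otimes\A\cong B\otimes\A$ being the one provided by Proposition~\ref{prop:equivd} and Lemma~\ref{lem:equivd-1side}(i).

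For the converse direction I would build the fiber functor directly from a $G$-Morita--Galois object $A$ for $B$. The natural candidate sends $Y\in\bimodcatin{B}{G}$ to a Hilbert space extracted from $(Y\otimes_B A)^G$, whose $G$-invariants carry a canonical right $A^G$-module structure; multiplicativity is governed by the exchange isomorphism of Lemma~\ref{lem:exchange} together with the associativity isomorphism $(Y\otimes_B A)\otimes_A(Y'\otimes_B A)\cong(Y\otimes_B Y')\otimes_B A$, while freeness of the $G$-action (via Proposition~\ref{prop:imprim}) guarantees that passing to $G$-invariants is compatible with these identifications. One then checks that the unit $B$ is sent to a one-dimensional space, using that $B$ is simple and that the relative commutants in Lemma~\ref{lem:equivd-1side}(iii) are trivial, so that the image of the unit is simple and the functor is genuinely $\Hilbf$-valued.

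The main obstacle is precisely the construction and verification of this fiber functor: extracting a genuine Hilbert space of the correct dimension from the $A^G$-module $(Y\otimes_B A)^G$, and proving that the result is a \emph{unitary tensor} functor rather than a merely lax one. Concretely, one must reconcile the fact that the fixed-point algebra $A^G$ need not be trivial with the requirement that $E$ land in plain Hilbert spaces; this is where the key identities of Lemma~\ref{lem:equivd-1side}(b) (the one-sided form of~\eqref{eq:Mkey}) enter, ensuring that the relevant intertwiners are unitary up to the scalar $\lambda=\dim_q B$. The final bookkeeping step is to show the two constructions are mutually inverse: applying Woronowicz's reconstruction to the fiber functor built from $A$ must return a quantum group $G_1$ whose action, reconstructed via Theorem~\ref{thm:main}, recovers $A$ with its original $G$-action and subalgebra $B$. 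This amounts to matching the equivalence $F\colon\Rep G_1\to\bimodcatin{B}{G}$, $U\mapsto(H_U\otimes A)^{G_1}$, appearing in the proof of Theorem~\ref{thm:main}, with $\Psi^{-1}$, and I expect the verification that $E$ is monoidal to be the most delicate part of the argument.
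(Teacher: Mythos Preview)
Your overall architecture matches the paper's: construct a fiber functor from a Morita--Galois object, and in the other direction use Woronowicz reconstruction plus Theorem~\ref{thm:main} applied to $\rmodcatin{B}{G}$. However, two concrete points deserve attention.

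First, the paper resolves your ``main obstacle'' with a specific definition you have not found: for $X\in\bimodcatin{B}{G}$ one takes
\[
F(X)=\{\xi\in (X\otimes_B A)^G \mid b\xi=\xi b \text{ for all } b\in B\},
\]
the space of $G$-invariant \emph{$B$-central} vectors, not merely $(X\otimes_B A)^G$ as an $A^G$-module. The point is that for such $\xi,\zeta$ the $A$-valued inner product $\langle\xi,\zeta\rangle_A$ lies in $B'\cap A^G=\C1$ by Lemma~\ref{lem:equivd-1side}(iii), so $F(X)$ is automatically a Hilbert space with no further extraction needed. Unitarity of $F_2$ is then checked on generators $B\otimes H_V\otimes B$ via the explicit unitaries $T_V\colon (H_V\otimes A)^G\to F(B\otimes H_V\otimes B)$, $\xi\otimes a\mapsto (\dim_q B)^{-1/2}x^i\otimes\xi\otimes ax_i$, together with a dimension count using Lemma~\ref{lem:exchange}. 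Your plan to pass through the $A^G$-module structure and then somehow reduce is less direct and would require additional work.

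Second, your ``mutual inverse'' step hides a genuine subtlety that the paper addresses explicitly. Starting from a fiber functor $E$, building $G_1$ and $A$ as you describe, and then forming the fiber functor $F_A$ from $A$, one does \emph{not} immediately recover $E$: what one gets is $F_A E_A\cong E$, where $E_A$ is the spectral functor of the $G_1$-action viewed as a monoidal autoequivalence of $\bimodcatin{B}{G}$. The paper does not show $E_A\cong\Id$ directly; instead it uses the cotensor product construction (Proposition~\ref{prop:cotensor}) to produce, for any given $\tilde E$, a Morita--Galois object $C$ with $F_C\cong\tilde E$, by twisting $A$ by a suitable bi-Hopf--Galois object. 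Your sketch of matching $\Psi^{-1}$ with the spectral equivalence does not account for this autoequivalence, so surjectivity of the correspondence is not yet established by your outline.
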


\bp Assume we are given a $G$-Morita--Galois object $A$ for $B$ as
in Definition~\ref{def:MG}. We define a functor
$F\colon\bimodcatin{B}{G}\to\Hilbf$ by
$$
F(X)=(\text{the space of }G\text{-invariant }B\text{-central vectors
in }X\otimes_BA).
$$
We will see later that the space $F(X)$ is finite dimensional. The
Hilbert space structure is defined as follows. The space
$X\otimes_BA$ is a right Hilbert $A$-module. If $\xi,\zeta\in F(X)$,
then $\langle \xi,\zeta\rangle_A \in B'\cap A^G=\C1$, so we can define
a scalar product by $(\zeta,\xi)1=\langle \xi,\zeta\rangle_A$.

Next, we define a tensor structure on $F$ by
$$
F_2\colon F(X)\otimes F(Y)\to F(X\otimes_BY),\ \ F_2((\xi\otimes
a)\otimes (\zeta\otimes c))=(\xi\otimes\zeta)\otimes ca.
$$
In order to check that $F_2$ is unitary it suffices to consider
bimodules of the form $B\otimes H_V\otimes B$ for $V\in\Rep G$,
since any $X\in\bimodcatin{B}{G}$ embeds isometrically into
$B\otimes X\otimes B$. By Lemma~\ref{lem:exchange} we have a
$G$-equivariant isomorphism of $B$-$A$-modules
\begin{equation} \label{eq:GM1}
B\otimes H_V\otimes A\cong (H_V\otimes A)^G\otimes A,
\end{equation}
so we can define a linear isomorphism
\begin{equation}\label{eq:T}
T_V\colon (H_V\otimes A)^G\to F(B\otimes H_V\otimes B)\subset
B\otimes H_V\otimes A, \ \ \xi\otimes a\mapsto (\dim_q
B)^{-1/2}x^i\otimes \xi\otimes a x_i,
\end{equation}
which in particular shows that $F(X)$ is indeed finite dimensional
for any $X\in\bimodcatin{B}{G}$. Note that the $A$-valued inner product on
$B\otimes H_V\otimes A$ is given by
$$
\langle b_1\otimes\xi_1\otimes a_1,b_2\otimes\xi_2\otimes
a_2\rangle_A =\varphi_B(b_1^*b_2)(\xi_2,\xi_1)a_1^*a_2.
$$
Therefore if we define a scalar product on $(H_V\otimes A)^G$ in the
standard way,
$$
(\xi_1\otimes a_1,\xi_2\otimes a_2)
= \varphi_{A^G}(\langle \xi_2\otimes a_2,\xi_1\otimes a_1\rangle_A)
= (\xi_1,\xi_2) \varphi_{A^G}(a_2^*a_1),
$$
then $T_V$ becomes unitary, since
\begin{equation} \label{eq:GM3}
\varphi_B(x^{i*}x^j)x_i^*ax_j=(\dim_q B)\varphi_{A^G}(a)1\ \
\text{for all}\ \ a\in A^G.
\end{equation}
Thus, in order to show that the maps $F_2$ are unitary it suffices
to check that, for all $U$ and $V$,
\begin{gather*}
T^{-1}_{H_U\otimes B\otimes H_V}F_2(T_U\otimes T_V)\colon
(H_U\otimes A)^G\otimes (H_V\otimes A)^G\to (H_U\otimes B\otimes
H_V\otimes A)^G,\\
(\xi\otimes a)\otimes(\zeta\otimes c)\mapsto (\dim_q
B)^{-1/2}\xi\otimes x^i\otimes \zeta\otimes c x_i a,
\end{gather*}
is unitary. It is clear from \eqref{eq:GM3} that this map is an
isometry, so to prove that it is a unitary isomorphism it is enough
to compare the dimensions of both sides. Using \eqref{eq:GM1} again,
we get isomorphisms
$$
(H_U\otimes B\otimes H_V\otimes A)^G\cong (H_U\otimes(H_V\otimes
A)^G\otimes A)^G\cong (H_U\otimes A)^G\otimes (H_V\otimes A)^G,
$$
which completes the proof of unitarity of $F_2$. We have thus proved
that $(F,F_2)$ is a unitary tensor functor.

\smallskip

Let us show next that the spectral functor $(\Rep G)^{\otimes\op}\to
\bimodcat{A^G}$ defined by the action of $G$ on $A$ can be
reconstructed from $F$. Consider the dual C$^*$-Frobenius algebra
$B\otimes B\in\bimodcatin{B}{G}$ with product
$$
(\dim_q B)^{1/2}\iota\otimes\varphi_B\otimes\iota\colon B\otimes
B\otimes B=(B\otimes B)\otimes_B(B\otimes B)\to B\otimes B
$$
and unit $(\dim_q B)^{-1/2}m_B^*\colon B\to B\otimes B$. By applying
$F$ we get a C$^*$-Frobenius object in $\Hilbf$, that is, by
Lemma~\ref{lem:FrobeniusVScstar}, a Frobenius C$^*$-algebra. It is
easy to see that the unitary
$$
T_\un\colon A^G\to F(B\otimes B)\subset B\otimes A,\ \ a\mapsto
(\dim_q B)^{-1/2}x^i\otimes ax_i,
$$
is an isomorphism of $((A^G)^\op,\varphi_{A^G})$ with this Frobenius
C$^*$-algebra. Similarly, any $B$-bimodule $B\otimes H_U\otimes B$
is a $(B\otimes B)$-bimodule in $\bimodcatin{B}{G}$, so $F(B\otimes
H_U\otimes B)$ becomes an $F(B\otimes B)$-bimodule, and using the
isomorphisms $T_U\colon (H_U\otimes A)^G\to F(B\otimes H_U\otimes
B)$ and $T_\un\colon (A^G)^\op\to F(B\otimes B)$ we recover the
$A^G$-bimodule structure on $(H_U\otimes A)^G$. Finally, one can
also easily check that the tensor structure of the spectral functor
can be recovered from that $F_2$ and the maps
$F(\iota\otimes\iota\otimes\varphi_B\otimes\iota\otimes\iota)\colon
F(B\otimes H_V\otimes B\otimes H_U\otimes B)\to F(B\otimes
H_V\otimes H_U\otimes B)$.

\smallskip

Assume now that we have another $G$-Morita--Galois object $\tilde A$
for $B$ defining an isomorphic fiber functor~$\tilde F$. Let
$\eta\colon F\to\tilde F$ be such a unitary monoidal natural
isomorphism. It follows from the above discussion that we then get
an isomorphism $A^G\cong \tilde A^G$ intertwining the spectral
functors $(\Rep G)^{\otimes\op}\to\bimodcat{A^G}$ and $(\Rep
G)^{\otimes\op}\to\bimodcat{\tilde A^G}$. Hence we get a
$G$-equivariant isomorphism $\theta\colon A\to\tilde A$. We claim
that $\theta$ is the identity map on $B$, so that $\theta$ is an
isomorphism of Morita--Galois objects for $B$.

In view of the way we obtained an isomorphism of the spectral
functors, we have commutative diagrams
$$
\begin{tikzcd}
(H_U\otimes A)^G\ar[rr, "\iota\otimes\theta"]\ar[d] & & (H_U\otimes\tilde A)^G\ar[d]\\
F(B\otimes H_U\otimes B)\ar[rr, "\eta_{B\otimes H_U\otimes B}"']&
&\tilde F(B\otimes H_U\otimes B),
\end{tikzcd}
$$
where the vertical arrows are the maps $T$ defined by~\eqref{eq:T}.
In other words, for any $\xi\otimes a\in (H_U\otimes A)^G$ we have
$$
\eta_{B\otimes H_U\otimes B}(x^i\otimes \xi\otimes
ax_i)=x^i\otimes\xi\otimes\theta(a)x_i.
$$
Using that $\eta F_2=\tilde F_2(\eta\otimes\eta)$ we then get that
for any $\zeta\otimes c\in (H_V\otimes A)^G$ we have
$$
x^i\otimes \xi\otimes x^j\otimes\zeta\otimes
\theta(c)x_j\theta(a)x_i=x^i\otimes \xi\otimes
x^j\otimes\zeta\otimes \theta(c x_j a)x_i
$$
in $B\otimes H_U\otimes B\otimes H_V\otimes \tilde A$. In the
simplest case $U=V=\un$ this gives
$$
x^i\otimes x^j\otimes x_j x_i=x^i\otimes x_j\otimes\theta(x_j)x_i,
$$
and applying $\varphi_B$ to the first leg we obtain $x^j\otimes
x_j=x^j\otimes \theta(x_j)$. Hence $\theta(x_j)=x_j$, so $\theta$ is
the identity map on $B$.

\smallskip

It is also clear that isomorphic Morita--Galois objects define
isomorphic fiber functors. It remains to show that any unitary fiber
functor is defined by a Morita--Galois object. Assume we are given
such a functor
$$
E \colon \bimodcatin{B}{G} \to \Hilb_f.
$$
By Woronowicz's Tannaka--Krein duality it
defines a compact quantum group $G_1$. Then $\rmodcatin{B}{G}$
becomes an invertible $(\Rep G)$-$(\Rep G_1)$-module category with
generator $B$ and we can consider the corresponding
$G_1$-$G$-Morita--Galois object $A$.

We claim that the canonical fiber functor $\Rep G_1\to\Hilbf$ is
isomorphic to the composition of the spectral functor $E_A\colon
\Rep G_1\to\bimodcatin{B}{G}$ for $G_1 \curvearrowright A$, with the
functor $F_A\colon\bimodcatin{B}{G}\to\Hilbf$ corresponding to $A$
as defined at the beginning of the proof. We thus have to define a
natural isomorphism
$$
H_U\to F((H_U\otimes A)^{G_1})\subset (H_U\otimes A)^{G_1}\otimes_B
A.
$$
As $(H_U\otimes A)^{G_1}\otimes_B A\cong H_U\otimes A$ by
Proposition~\ref{prop:imprim}, it is straightforward to check that
$H_U\ni\xi\mapsto \xi\otimes1\in H_U\otimes A$ is the required
isomorphism.

In other words, we have proved that the spectral functor associated
with the action of $G_1$ on $A$ gives an autoequivalence $E_A$ of
$\Rep G_1=\bimodcatin{B}{G}$ such that $F_AE_A\cong E$. Now, in
order to complete the proof of the theorem it would be enough to
show that $E_A$ is isomorphic to the identity functor. It is indeed
possible to do so, but let us instead finish the proof by giving a
more formal argument, as follows.

Suppose that $\tilde E\colon\bimodcatin{B}{G}\to\Hilb_f$ is a
unitary fiber functor, and let~$\tilde G_1$ be the corresponding
compact quantum group. Consider the bi-Hopf--Galois object $\tilde
A$ corresponding to the pair of functors $\tilde E$ and $E$. In
other words, $\tilde A$ is the Morita--Galois object defined by the
category $\Rep G_1$, considered as a $(\Rep G_1)$-$(\Rep \tilde
G_1)$-module category, and by the object $\un\in\Rep G_1$. Then by
Proposition~\ref{prop:cotensor} below, the cotensor product
$\tilde\A \boxvoid_{G_1} \A$ is the regular subalgebra of a $\tilde
G_1$-$G$-Morita--Galois object~$C$, and~$C^{\tilde G_1}$ is
canonically isomorphic to $A^{G_1}=B$.

By definition we have $\Rep \tilde{G}_1 = \bimodcatin{B}{G}$. Under
this identification, the spectral functor associated with
$\tilde{G}_1 \curvearrowright \tilde{A}$, which corresponds to the
monoidal equivalence $\Rep \tilde{G}_1 \to \Rep G_1$, is just the
identity functor on $\bimodcatin{B}{G}$. Similarly, the spectral
functor associated with $\tilde G_1 \curvearrowright C$ can be
regarded as an autoequivalence~$E_C$ of $\bimodcatin{B}{G}$, which
is naturally unitarily monoidally isomorphic to $E_A$ by
associativity of the cotensor product operation. We thus get
$F_CE_A\cong F_CE_C,$ but the latter is isomorphic to $\tilde E$ by
the same observation as for $A$ and $E$ above. In particular, if we
started with $\tilde E=EE_A$, we would get $F_C\cong E$.
 \ep

\section{Categorical Morita equivalence} \label{sec:relten}

\subsection{Weak monoidal Morita equivalence and tensor product of bimodule categories}

Recall that two rigid C$^*$-tensor categories $\CC_1$ and $\CC_2$
are called \emph{unitarily weakly monoidally Morita
equivalent}~\cite{MR1966524} if there exists a rigid
C$^*$-$2$-category $\CC$ with the set $\{1,2\}$ of $0$-cells such
that $\CC_{11}$ and $\CC_{22}$ are unitarily monoidally equivalent
to $\CC_1$ and $\CC_2$ respectively, and $\CC_{12}\ne0$, or in other
words, if there exists an invertible $\CC_1$-$\CC_2$-bimodule
category. Using Frobenius algebras it is shown in \cite{MR1966524}
that this is indeed an equivalence relation. In the fusion category
case a more transparent proof is obtained using relative tensor
product of bimodule categories. We now want to make sense of this in
our infinite C$^*$-setting.

In fact, we will show a bit more. By passing to equivalent
categories we may assume that $\CC_{11}=\CC_1$ and $\CC_{22}=\CC_2$.
Assume also that $\CC_2$ is unitarily weakly monoidally Morita
equivalent to a third rigid C$^*$-tensor category $\CC_3$, and let
$(\cC_{ij})_{i,j=2,3}$ be the corresponding rigid
C$^*$-$2$-category. Let us show then that the two $2$-categories
$(\cC_{ij})_{i,j=1}^2$ and $(\cC_{ij})_{i,j=2}^3$ can be `combined'
into a C$^*$-$2$-category with $0$-cells $\{1, 2, 3\}$. We thus need
to define $\cC_{13}$, $\cC_{31}$ as bimodule categories over $\cC_1$
and $\cC_3$, and define the horizontal compositions $\cC_{1 3}
\times \cC_{3 1} \to \cC_1$, $\cC_{13} \times \cC_{32} \to
\cC_{12}$, etc. The idea is simple: using the duality morphisms we
can express everything in terms of the categories that we already
have.

Thus, we define $\cC_{13}$ as the idempotent completion of the
category with objects $X Y$ for $X \in \cC_{12}$ and $Y \in
\cC_{23}$, with respect to the morphism sets
$$
\cC_{13}(X Y, X' Y') = \cC_2(\bar{X}' X, Y' \bar{Y}).
$$
In the following exposition, let us denote by $S_0$ the
representative in $\cC_2(\bar{X}' X, Y' \bar{Y})$ of a morphism
$S\colon X Y \to X' Y'$. The composition of morphisms in $\cC_{13}$
is then defined by
$$
(S \circ T)_0 = (\iota \otimes R_{Y'}^* \otimes \iota) (S_0 \otimes T_0) (\iota \otimes \bar{R}_X \otimes \iota)
$$
for $T \in \cC_{13}(X Y, X' Y') $ and $S \in \cC_{13}(X' Y', X''
Y'')$, so that $(R_X^* \otimes \iota_{Y\bar Y}) (\iota_{\bar XX}
\otimes \bar{R}_Y)$ represents the identity morphism of $X Y$.
Moreover, $(X, Y) \mapsto X Y$ is a bifunctor: for $S \in
\cC_{12}(X, X')$ and $T \in \cC_{23}(Y, Y')$, the morphism $S
\otimes T \colon X Y \to X' Y'$ is represented by $(T_0 \otimes
\iota) \bar{R}_Y R_{X'}^* (\iota \otimes S_0)$.

The left $\cC_1$-module category structure is defined by $U (X Y) =
(U X) Y$ at the level of objects, and by
$$
(S \otimes T)_0 = T_0 ((R_{U'}^* (\iota \otimes S)) \otimes \iota)
$$
for $S \in \cC_1(U, U')$ and $T \in \cC_{13}(X Y, X' Y')$, at the
level of morphisms. The right $\cC_3$-module category structure on
$\cC_{13}$ is defined in a similar way. The $\cC_3$-$\cC_1$-module
category $\cC_{3 1}$ is also defined in an analogous way as the
idempotent completion of the category of objects $Z W$ for $Z \in
\cC_{32}$ and $W \in \cC_{21}$, with morphism sets
$$
\cC_{31}(Z W, Z' W') = \cC_2(W \bar{W}', \bar{Z} Z').
$$

The horizontal composition $\cC_{1 3} \times \cC_{3 1} \to \cC_1$ is
given by $(X Y) (Z W) = X (Y Z) W$ at the level of objects, and at
the level of morphisms $S \otimes T \in \cC_1(X Y Z W, X' Y' Z'
W')$, for $S \in \cC_{13}(X Y, X' Y')$ and $T \in \cC_{31}(W Z, W'
Z')$, is given by
$$
((((\iota \otimes R_Y^*)(\iota_{X'} \otimes S_0 \otimes \iota)) \otimes ((\bar{R}_Z^* \otimes \iota)(\iota \otimes T_0 \otimes \iota_{W'}))) (\bar{R}_{X'} \otimes \iota \otimes R_{W'}).
$$
The horizontal composition $\cC_{3 1} \times \cC_{1 3} \to \cC_3$ is defined in a similar way. Next let us describe $\cC_{1 3} \times \cC_{3 2} \to \cC_{1 2}$. At the level of objects, it is given by $(X Y) Z = X (Y Z)$ for $X \in \cC_{1 2}$, $Y \in \cC_{2 3}$, and $Z \in \cC_{3 2}$. At the level of morphisms, $S \otimes T \in \cC_{1 2}(X Y Z, X' Y' Z')$ for $S \in \cC_{1 3}(X Y, X' Y')$ and $T \in \cC_{3 2}(Z, Z')$ is given by
$$
(\iota_{X' Y' Z'} \otimes R_{Y Z}^*) (\iota_{X' Y'} \otimes T \otimes \iota_{\bar{Z} \bar{Y} Y Z}) (\iota_{X' Y'} \otimes \bar{R}_Z \otimes \iota_{\bar{Y} Y Z}) (\iota_{X'} \otimes S_0 \otimes \iota_{Y Z}) (\bar{R}_{X'} \otimes \iota_{XYZ}).
$$
The remaining horizontal compositions are defined similarly.

\begin{lemma}
The category $\cC_{1 3}$ is a C$^*$-category with the norm $\norm{S} = \norm{S \otimes \iota_{\bar{Y}}}_{\cC_{1 2}(X Y \bar{Y}, X' Y' \bar{Y})}$ and the involution $(S^*)_0 = (R_{\bar{X}' X} \otimes \iota) (\iota \otimes S_0^* \otimes \iota) (\iota \otimes \bar{R}_{Y' \bar{Y}}) \in \cC_2(\bar{X} X', Y \bar{Y}')$ for $S \in \cC_{1 3}(X Y, X' Y')$.
\end{lemma}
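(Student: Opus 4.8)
The plan is to transport the C$^*$-structure of $\cC_{12}$ onto $\cC_{13}$ along the padding map
$$
\Phi_{XY,X'Y'}\colon \cC_{13}(XY,X'Y')\to\cC_{12}(XY\bar Y,X'Y'\bar Y),\quad S\mapsto S\otimes\iota_{\bar Y},
$$
where $\bar Y$ denotes the conjugate of the $\cC_{23}$-component of the \emph{source} object and $S\otimes\iota_{\bar Y}$ is the image of $(S,\iota_{\bar Y})$ under the horizontal composition $\cC_{13}\times\cC_{32}\to\cC_{12}$ described above (note $\bar Y\in\cC_{32}$). The point is that the norm in the statement is exactly $\norm{S}=\norm{\Phi_{XY,X'Y'}(S)}_{\cC_{12}}$, so the lemma follows once we check that $\Phi$ is a linear injection compatible with composition and carrying the involution of the statement to the adjoint in $\cC_{12}$. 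It suffices to do this on morphisms between the generating objects $XY$; the C$^*$-structure then extends canonically to the idempotent completion.

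First I would establish injectivity of $\Phi$. Since $\bar Y\in\cC_{32}$ has the conjugate $Y\in\cC_{23}$ inside the given rigid $2$-category, horizontal composition with $\iota_{\bar Y}$ is faithful: one recovers $S_0\in\cC_2(\bar X'X,Y'\bar Y)$ from $S\otimes\iota_{\bar Y}$ by pre- and post-composing with the duality morphisms $R_Y,\bar R_Y$, that is, by a partial trace over the padding factor. This bounded left inverse shows $\Phi$ is injective with closed range, so $\norm{\cdot}$ is genuinely a norm and the morphism spaces, being isometric to closed subspaces of the Banach spaces $\cC_{12}(XY\bar Y,X'Y'\bar Y)$, are complete.

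Next I would treat composition. For $T\in\cC_{13}(XY,X'Y')$ and $S\in\cC_{13}(X'Y',X''Y'')$, padding \emph{both} factors with the common object $\bar Y$ (the source's conjugate) and using that $\cC_{13}\times\cC_{32}\to\cC_{12}$ is a bifunctor gives $(S\circ T)\otimes\iota_{\bar Y}=(S\otimes\iota_{\bar Y})\circ(T\otimes\iota_{\bar Y})$ in $\cC_{12}$, hence submultiplicativity of the norm. The crux is the compatibility of the involution: I would verify that for $S\in\cC_{13}(XY,X'Y')$,
$$
(S\otimes\iota_{\bar Y})^*=S^*\otimes\iota_{\bar Y}\quad\text{in }\cC_{12},
$$
where on the left $*$ is the adjoint in the C$^*$-category $\cC_{12}$ and on the right $*$ is the involution defined in the statement, both paddings again using the source's $\bar Y$. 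Granting this, the C$^*$-identity $\norm{S^*\circ S}=\norm{S}^2$ and the positivity of $S^*\circ S$ in $\End_{\cC_{13}}(XY)$ descend immediately from the C$^*$-axioms in $\cC_{12}$: indeed $\Phi$ then restricts to an injective $*$-homomorphism $\End_{\cC_{13}}(XY)\to\End_{\cC_{12}}(XY\bar Y)$ onto a $*$-subalgebra of a C$^*$-algebra, and the involution axioms $(S^*)^*=S$ and $(S\circ T)^*=T^*\circ S^*$ follow from those of the adjoint together with injectivity of $\Phi$.

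The main obstacle is the displayed involution identity. Unwinding it means inserting the explicit involution formula $(S^*)_0=(R_{\bar X'X}\otimes\iota)(\iota\otimes S_0^*\otimes\iota)(\iota\otimes\bar R_{Y'\bar Y})$ together with the explicit description of $\otimes\iota_{\bar Y}$ coming from the bifunctor $\cC_{13}\times\cC_{32}\to\cC_{12}$, and matching the resulting tangle of standard duality morphisms against the $\cC_{12}$-adjoint of $S\otimes\iota_{\bar Y}$. This is a diagram chase in the rigid C$^*$-$2$-category: it uses the conjugate equations for $Y$ and $\bar Y$, naturality of the solutions, the identifications $\overline{\bar X'X}=\bar XX'$ and $\overline{Y'\bar Y}=Y\bar{Y'}$ for the standard solutions chosen as in Section~\ref{sec:conv}, and the compatibility of $*$ with the duality map $T\mapsto T^\vee$ in $\cC_2$. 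Once this single identity is verified, everything else is formal.
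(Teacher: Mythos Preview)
Your approach is essentially the paper's: transport the C$^*$-structure from $\cC_{12}$ via a padding functor $S\mapsto S\otimes\iota_{\bar Z}$ for $Z\in\cC_{23}$. The paper, however, fixes one nonzero $Z$ once and for all, so that $F_Z\colon\cC_{13}\to\cC_{12}$, $F_Z(XY)=(XY)\bar Z$, is a genuine faithful $*$-preserving functor; all the C$^*$-axioms, including submultiplicativity, are then inherited at once. It finishes by observing that the resulting norm is independent of $Z$, because any other $Z'\in\cC_{23}$ embeds into $Z(\bar ZZ')$; this is what matches the source-dependent $\bar Y$ appearing in the statement.

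Your source-dependent padding matches the statement's norm directly, but it is not a functor on all of $\cC_{13}$, and this leaves a gap in your submultiplicativity step. When you write ``padding both factors with the common object $\bar Y$ \dots hence submultiplicativity'', what you actually obtain is $\|S\circ T\|\le\|S\otimes\iota_{\bar Y}\|\cdot\|T\|$; but by definition $\|S\|=\|S\otimes\iota_{\bar{Y'}}\|$ uses the padding $\bar{Y'}$ coming from the source $X'Y'$ of $S$, not $\bar Y$. You never argue that $\|S\otimes\iota_{\bar Y}\|=\|S\otimes\iota_{\bar{Y'}}\|$. The fix is precisely the paper's independence observation: $\bar{Y'}$ is a subobject of $\bar Y(Y\bar{Y'})$ in $\cC_{32}$ via $d(Y)^{-1/2}R_Y\otimes\iota_{\bar{Y'}}$, so $S\otimes\iota_{\bar{Y'}}$ is a compression of $(S\otimes\iota_{\bar Y})\otimes\iota_{Y\bar{Y'}}$, and tensoring by identity morphisms on the right in the rigid $2$-category is isometric. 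With this one extra line your outline goes through; as written, the submultiplicativity claim is unsupported.
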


\bp Take any nonzero object $Z\in\cC_{23}$. It is easy to check that
we can define a faithful $*$-preserving functor
$F_Z\colon\cC_{13}\to\cC_{12}$ by letting $F_Z(XY)=(XY)\bar Z$ on
objects and $F_Z(S)=S\otimes\iota_{\bar Z}$ on morphisms. It follows
that the $*$-operation in the formulation of the proposition is
indeed an involution and that the C$^*$-norm on morphisms in
$\cC_{12}$ defines a C$^*$-norm on morphisms in $\cC_{13}$. The
latter norm is independent of the choice of $Z$, since any other
object $Z'\in\cC_{23}$ embeds into $Z(\bar Z Z')$. \ep

In a similar way we check that $\cC_{31}$ is a C$^*$-category. A
straightforward verification shows then that $(\cC_{i j})_{i,j=1}^3$
is a rigid C$^*$-$2$-category.

\smallskip

We denote the invertible $\CC_1$-$\CC_3$-module category $\CC_{13}$
by $\CC_{12}\boxtimes_{\cC_{2}}\CC_{23}$. Note that using
representatives $(V_j)_j$ of the isomorphism classes of simple
objects in $\cC_2$, we can write
\begin{equation}
\label{eq:C13-mor-Vj-barVj-pres} \cC_{13}(X Y, X' Y') = \bigoplus_j
\cC_2(\bar{X}' X, V_j) \otimes \cC_2(V_j, Y' \bar{Y}) \cong
\bigoplus_j \cC_{1 2}(X, X' V_j) \otimes \cC_{2 3}(Y, \bar{V}_j Y').
\end{equation}

\begin{remark}
\label{rem:2-cat-compos-monad-pic} Consider the Deligne tensor
product $\cC_{12} \boxtimes \cC_{23}$, which is the category with
objects $X \boxtimes Y$ and morphisms
$$
\Mor_{\cC_{12} \boxtimes \cC_{23}}(X \boxtimes Y, X' \boxtimes Y') =
\cC_{12}(X, X') \otimes \cC_{23}(Y, Y').
$$
The functor $T(X \boxtimes Y) = \bigoplus_j X V_j \boxtimes
\bar{V}_j Y$ is an endofunctor of the ind-category of $\cC_{12}
\boxtimes \cC_{23}$. Decomposition of the tensor products $V_j
V_{j'}$ into simple objects induces the structure of a \emph{monad} on $T$,
that is, a natural transformation $T^2 \to T$ (together with $\Id
\to T$). Formula~\eqref{eq:C13-mor-Vj-barVj-pres} shows that the
morphism sets in $\cC_{1 3}$ are given by
$$
\cC_{1 3}(X Y, X' Y') \cong \Mor_{\cC_{12} \boxtimes \cC_{23}}(X
\boxtimes Y, T(X' \boxtimes Y'))
$$
for $X, X' \in \cC_{12}$ and $Y, Y' \in \cC_{23}$. The right hand
side of the above can be regarded as the set of $T$-module morphisms
between the free $T$-modules $T(X \boxtimes Y)$ and $T(X' \boxtimes
Y')$. Thus, $\cC_{1 3}$ can be interpreted as the category of
$T$-modules in $\cC_{12} \boxtimes \cC_{23}$, and $X Y$ is
represented by $T(X \boxtimes Y)$.
\end{remark}

\begin{remark}
\label{rem:2-cat-compos-Q-sys-pic}
By~\cite{MR1966524}*{Proposition~4.5}, or by
Theorem~\ref{thm:inv-bimod-cat-is-semisimple-proper-mod-cat}, we may
assume that $\cC_{12}=\modQin{\cC_1}$ and $\cC_2 =
\bimodcatin{Q}{\cC_1}$ for a standard $Q$-system $(Q, m,v)$ in
$\cC_1$, and $\cC_{2 3}=\rmodcatin{Q'}{\cC_2}$ and $\cC_3 =
\bimodcatin{Q'}{\cC_2}$ for a standard $Q$-system $(Q', m', v')$ in
$\cC_2$. Then, denoting by $P_{Q',Q'}\colon Q' \otimes Q' \to Q'
\otimes_Q Q'$ the structure morphism of the tensor product over $Q$,
the morphisms $\tilde{m} = m' P_{Q',Q'}$ and $\tilde{v} = v' v$
define the structure of a standard $Q$-system on $Q'$ as an object
in $\cC_1$. We claim that $\cC_{1 3}$ is equivalent to
$\rmodcatin{Q'}{\cC_1}$ as a $\cC_1$-$\cC_3$-module category in such a way that
$X Y$ corresponds to $X \otimes_Q Y$ for $X \in \modQin{\cC_1}$ and
$Y \in \rmodcatin{Q'}{\bimodcat{Q}}$ (note
that the category $\bimodcatin{Q'}{\cC_1}$ can be regarded as $\cC_3
= \bimodcatin{Q'}{\bimodcat{Q}}$, since any $Q'$-module is also a
$Q$-module by the inclusion $v'\colon Q \to Q'$).

Indeed, $X \otimes_Q Y$ inherits the structure of a right
$Q'$-module from $Y$, and by the Frobenius reciprocity we have
$$
\Mor_{\rmodcatin{Q'}{\cC_1}}(X \otimes_Q Y, X' \otimes_Q Y') \cong
\Mor_{\bimodcat{Q}}(\bar{X}' \otimes X, Y' \otimes_{Q'} \bar{Y}).
$$
This shows that the subcategory of $\rmodcatin{Q'}{\cC_1}$ generated
by the objects of the form $X \otimes_Q Y$ is equivalent
to~$\cC_{13}$. But this is the whole category
$\rmodcatin{Q'}{\cC_1}$, since any right $Q'$-module $X$ in $\cC_1$
is a submodule of $X\otimes_QQ'$.
\end{remark}

\subsection{Cotensor product of bi-Morita--Galois objects}

At the level of bi-Morita--Galois objects relative tensor product of
bimodule categories corresponds to cotensor product. In the Hopf
algebra setting this result has been already obtained in
\cite{MR2989520}, so we will only give a sketch of an alternative
argument in our C$^*$-setting. Note that this result does not need a
characterization of algebras arising from invertible bimodule
categories.

\begin{proposition}\label{prop:cotensor}
Let $G_1$, $G_2$ and $G_3$ be compact quantum groups, $A$ be a
$G_1$-$G_2$-Morita--Galois object and~$B$ be a
$G_2$-$G_3$-Morita--Galois object. Consider the bimodule categories
$\DD_A$ and $\DD_B$, and let $X\in\DD_A$ and $Y\in\DD_B$ be the
generators corresponding to $A$ and $B$, respectively. Then the
$G_1$-$G_3$-Morita--Galois object corresponding to the invertible
bimodule category $\DD_B\boxtimes_{\Rep G_2}\DD_A$ and its generator
$YX$ is the completion of $\A\boxvoid_{G_2}\BB$.
\end{proposition}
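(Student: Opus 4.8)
The plan is to read off the regular subalgebra of the $G_1$-$G_3$-Morita--Galois object $C$ attached to the pair $(\cC_{13},YX)$, where $\cC_{13}=\DD_B\boxtimes_{\Rep G_2}\DD_A$, from the Tannaka--Krein reconstruction of Section~\ref{sec:TK}, and to match it termwise with $\A\boxvoid_{G_2}\BB$. Since $\cC_{13}$ is invertible by construction and $YX$ is a generating object, the `only if' direction of Theorem~\ref{thm:main} already tells us that $C$ is an honest $G_1$-$G_3$-Morita--Galois object; the whole content of the proposition is therefore the identification of its regular subalgebra $\mathcal R$, together with its $*$-algebra and $G_1$-$G_3$-bicomodule structure.

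First I would spell out $\mathcal R$ by the reconstruction for an object carrying a left $G_1$-action (so that $U_i\in\Rep G_1$ acts on the right) and a right $G_3$-action (so that $W_k\in\Rep G_3$ acts on the left):
\[
\mathcal R=\bigoplus_{\substack{i\in\Irr(G_1)\\k\in\Irr(G_3)}}\bar H_i\otimes\bar H_k\otimes\cC_{13}\bigl(YX,(W_kY)(XU_i)\bigr).
\]
Feeding in the hom-space description~\eqref{eq:C13-mor-Vj-barVj-pres} of the relative tensor product gives
\[
\cC_{13}\bigl(YX,(W_kY)(XU_i)\bigr)\cong\bigoplus_{j\in\Irr(G_2)}\DD_B(B,W_kBV_j)\otimes\DD_A(A,\bar V_jAU_i).
\]
Now I would compare this with the spectral descriptions $\BB=\bigoplus_{j,k}\bar H_j\otimes\bar H_k\otimes\DD_B(B,W_kBV_j)$ and $\A=\bigoplus_{i,j}\bar H_i\otimes\bar H_j\otimes\DD_A(A,V_jAU_i)$. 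The summand of $\mathcal R$ indexed by $(i,j,k)$ is the product of a $G_2$-spectral component of $\A$, with multiplicity space $\bar H_i\otimes\DD_A(A,\bar V_jAU_i)$, and a $G_2$-spectral component of $\BB$, with multiplicity space $\bar H_k\otimes\DD_B(B,W_kBV_j)$, the two sharing the single Peter--Weyl index $j\in\Irr(G_2)$. Because the right $G_2$-comodule $\A$ and the left $G_2$-comodule $\BB$ decompose over the common set $\Irr(G_2)$, and for each $j$ the cotensor product contracts the two matching $G_2$-legs to a one-dimensional space of invariants — which is exactly the datum encoded by the Frobenius reciprocity isomorphism in~\eqref{eq:C13-mor-Vj-barVj-pres} — I obtain a linear isomorphism $\mathcal R\cong\A\boxvoid_{G_2}\BB$.

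It then remains to see that this isomorphism respects all the structure. The left $G_1$- and right $G_3$-coactions on $\mathcal R$ live on the $\bar H_i$- and $\bar H_k$-legs through the contragredient formulas of Section~\ref{sec:TK}, and under the identification they become the coactions that $\A\boxvoid_{G_2}\BB$ inherits from the left $G_1$-coaction of $\A$ and the right $G_3$-coaction of $\BB$; these commute with the respective $G_2$-coactions precisely because the two actions on each of $A$ and $B$ commute, so $\A\boxvoid_{G_2}\BB$ is genuinely a $G_1$-$G_3$-bicomodule algebra. The delicate point is multiplicativity: the product on $\mathcal R$ is the reconstruction product, assembled from the fusion isometries in $\Rep G_1$ and $\Rep G_3$ and the tensor isometry $F_2$ (that is, composition in $\cC_{13}$), whereas the product on $\A\boxvoid_{G_2}\BB$ is merely the restriction of the componentwise product on $\A\otimes\BB$. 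Matching them comes down to checking that the composition law of $\cC_{13}$, which contracts the shared $\Rep G_2$-legs through the duality morphisms $R$ and $\bar R$, factors under~\eqref{eq:C13-mor-Vj-barVj-pres} as the composition in $\DD_B$ (building the product of $\BB$) tensored with the composition in $\DD_A$ (building the product of $\A$), with the $G_2$-index contracted in the same way the cotensor product multiplies the matching $G_2$-legs. Standardness and sphericity of the solutions in $\Rep G_2$ make the two contractions agree, and the involution is treated identically from the explicit $*$-formula for $\cC_{13}$.

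The main obstacle is precisely this last compatibility check: keeping the three Peter--Weyl legs straight while reconciling the categorical composition in the relative tensor product with the elementwise product of the cotensor product. As the analogous identification is already known in the Hopf-algebraic setting~\cite{MR2989520}, I would present it only as a sketch; alternatively one may route the argument through the $Q$-system picture of Remark~\ref{rem:2-cat-compos-Q-sys-pic}, realising $\cC_{13}$ as modules over an iterated standard $Q$-system and transporting the known associativity of the cotensor operation across the equivalence. Once $\mathcal R\cong\A\boxvoid_{G_2}\BB$ is established as $*$-algebras with matching bicomodule structure, $C$ is its completion by definition.
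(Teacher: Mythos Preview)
Your proposal is correct and follows essentially the same route as the paper's proof: both write out the regular subalgebra of the algebra attached to $(\DD_B\boxtimes_{\Rep G_2}\DD_A,\,YX)$ via the Tannaka--Krein reconstruction, decompose the morphism spaces $\cC_{31}(YX,W_kYXU_i)$ using~\eqref{eq:C13-mor-Vj-barVj-pres} over $\Irr(G_2)$, and identify the result termwise with $\A\boxvoid_{G_2}\BB$ as a $G_1$-$G_3$-bicomodule, leaving the compatibility of products and involutions as a direct check. Your write-up is in fact somewhat more explicit than the paper's about what that last check involves; apart from a harmless labeling choice (you write $\cC_{13}$ where the paper would write $\cC_{31}$, and your $V_j$/$\bar V_j$ placement differs from the paper's by the relabeling $j\leftrightarrow\bar j$), the arguments coincide.
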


\begin{proof}
We write $\cC_n$ for $\Rep G_n$ ($n=1,2,3$), $\cC_{32}$ for $\DD_B$
and $\cC_{21}$ for $\DD_A$. Choose representatives $(U_i)_i$,
$(V_j)_j$ and $(W_k)_k$ of the isomorphism classes of irreducible
representations of $G_1$, $G_2$, and $G_3$ respectively. The regular
subalgebra of the $G_1$-$G_3$-C$^*$-algebra corresponding to $Y X\in
\cC_{31}=\DD_B\boxtimes_{\Rep G_2}\DD_A$ is given by
$$
\bigoplus_{\substack{i\in \Irr (G_1)\\ k\in \Irr (G_3)}} \bar{H}_i
\otimes \cC_{31}(Y X, W_k Y X U_i) \otimes \bar H_k.
$$
Similarly to~\eqref{eq:C13-mor-Vj-barVj-pres}, we have
\begin{multline*}
\cC_{31}(Y X, {W}_k Y X U_i)\cong\cC_2(X\bar U_k\bar X,\bar YW_kY)\\
\cong \bigoplus_{j \in \Irr (G_2)} \cC_2(X \bar{U}_i \bar{X}, V_j)
\otimes \cC_2(V_j, \bar{Y} {W}_k Y) \cong \bigoplus_{j \in \Irr
(G_2)} \cC_{21}(X, {V}_j X U_i) \otimes \cC_{32}(Y, {W}_k Y\bar
V_j).
\end{multline*}
From this we see that the regular subalgebra is isomorphic to $\A
\boxvoid_{G_2} \BB$ as a left $\C[G_1]$-comodule and a right
$\C[G_3]$-comodule. It is also not difficult to compare the products
and involutions on the two algebras.
\end{proof}

\subsection{Categorical Morita equivalence and Brauer--Picard group}

Similarly to \cite{MR2362670} we give the following definition.

\begin{definition}
Two compact quantum groups $G_1$ and $G_2$ are called
\emph{categorically Morita equivalent} if there is an invertible
$(\Rep G_2)$-$(\Rep G_1)$-module category.
\end{definition}

By Theorem~\ref{thm:main}, two compact quantum groups $G_1$ and
$G_2$ are categorically Morita equivalent if and only if there
exists a $G_1$-$G_2$-Morita--Galois object.

\smallskip

An invertible bimodule category implementing categorical Morita equivalence of $G_1$ and $G_2$ is by no means unique. This leads to a notion of the Brauer--Picard group~\cite{MR2677836}. Namely, in our analytic setting, by the \emph{Brauer--Picard group} of a rigid C$^*$-tensor category $\CC$ we mean the set $\BrPic(\CC)$ of equivalence classes of invertible $\CC$-bimodule categories, with the group law defined by the relative tensor product $\boxtimes_\CC$. For $\CC=\Rep G$, we can equivalently define $\BrPic(\Rep G)$ as the set of equivariant Morita equivalence classes of $G$-$G$-Morita--Galois objects, with the group law defined by the cotensor product over $G$. We will discuss these notions for compact quantum groups in detail elsewhere, confining ourselves for the moment to a few examples and remarks.

\begin{example}
Any finite quantum group $G$ is categorically Morita equivalent to
its dual $\hat G$. This follows by considering a depth $2$ subfactor
$N\subset N\rtimes G$ and was already observed by
M{\"u}ger~\cite{MR1966524}*{Corollary~6.16}, but let us show this
using Morita--Galois objects.

Consider the C$^*$-algebra $A=C(G)\rtimes G$, where $G$ acts on
$C(G)$ by right translations. The action of~$G$ on~$C(G)$ by left
translations extends in the obvious way to an action on $A$, while $\hat{G}$ also acts on $A$ by the dual action.
These two
actions commute and we claim that $A$ is a $G$-$\hat
G$-Morita--Galois object.

Since the action $G$ on $C(G)$ by left translations is free, the
action of $G$ on $A$ is also free by Proposition~\ref{prop:freesubalg}.
For similar reasons the action of $\hat G$ is free. Next, let
$u^s_{ij}$, $s\in\Irr(G)$, $i,j=1,\dots,d_s$,  be matrix
coefficients of irreducible unitary representations of $G$. The dual
basis with respect to the Haar state is given by $d_s u^{s*}_{ij}$.
For any $\omega\in C(\hat G) = C^* G \subset A$, we have
$$
\sum_{s,i,j}d_s u^{s*}_{ij}\omega u^s_{ij}=\sum_{s,i,j,k}d_s(u^s_{kj},\omega_{(1)})u^{s*}_{ij}u^s_{ik}\omega_{(2)}=\sum_{s,j}d_s(u^s_{jj},\omega_{(1)})\omega_{(2)}.
$$
Up to normalization, the Haar state $h_{\hat G}$ on $C(\hat
G)\cong\bigoplus_s \Mat_{d_s}(\C)$ is given by $\bigoplus_s d_s\Tr$.
Hence, up to a scalar factor, the above expression equals
$$
h_{\hat G}(\omega_{(1)})\omega_{(2)}=h_{\hat G}(\omega)1.
$$
Therefore the second identity in \eqref{eq:Mkey} is satisfied for
$G_1=G$ and $G_2=\hat G$, as $A^G=C(\hat G)$ and $A^{\hat G}=C(G)$.
Since the roles of $G$ and $\hat G$ are symmetric, the first
identity there is satisfied as well. Hence $A$ is indeed
a $G$-$\hat G$-Morita--Galois object. Note also that the canonical
invariant state on $A \cong M_{\dim C(G)}(\C)$ is the unique
tracial state.
\end{example}

\begin{example}
Assume $G$ is a genuine compact group and $\pi\colon
G\to\operatorname{PU}(H)$ is a projective unitary representation of
$G$ on a finite dimensional Hilbert space $H$. Consider the algebra
$C(G)\otimes B(H)$ with two commuting actions of $G$: one action is
given by left translations of $G$ on $C(G)$, the other by the tensor
product of the action by right translations on $C(G)$ and by
$\Ad\pi$ on $B(H)$. These actions are free by
Proposition~\ref{prop:freesubalg} and both fixed point algebras are
isomorphic to $B(H)$. Taking the unique tracial states on these
algebras it is easy to check that identities \eqref{eq:Mkey} are
satisfied. Therefore $A$ is a $G$-$G$-Morita--Galois object.

In particular, the categories $\Rep G$ and $\bimodcatin{B(H)}{G}$ are
unitarily monoidally equivalent. Modulo unitarity, this, in fact,
follows already from~\cite{MR0498794} (see also~\cite{MR1614178}*{Corollary~3.2}), since $B(H)$
is an Azumaya algebra in the symmetric monoidal category $\Rep G$.

This simple example has the following consequence: if $G$ is a genuine compact connected group, then any compact quantum group categorically Morita equivalent to $G$ is monoidally equivalent to $G$. Indeed, if $G'$ is such a compact quantum group, then $\Rep G'$ is unitarily monoidally equivalent to $\bimodcatin{B}{G}$ for some simple $G$-C$^*$-algebra $B$. Since $G$ is connected, $B$ must be a full matrix algebra $B(H)$, and the claim follows.
\end{example}

\appendix
\section{\texorpdfstring{$Q$}{Q}-systems and proper module categories}

The goal of this appendix is to prove the following correspondence
between $Q$-systems and proper module categories.

\begin{theorem}
\label{thm:proper-mod-equiv-to-Q-sys-mod} Let $\cC$ be a rigid
C$^*$-tensor category with simple unit, and let $\cD$ be a nonzero
indecomposable semisimple proper right $\cC$-module category. Then
there is an irreducible $Q$-system $A$ in $\cC$ such that $\cD$ is
unitarily equivalent to $\lmodcatin{A}{\cC}$ as a right $\cC$-module
category.
\end{theorem}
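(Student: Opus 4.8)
The plan is to realize $\cD$ through the internal endomorphisms of a single well-chosen object, following Ostrik's construction but paying attention to the C$^*$-structure. Since $\cD$ is nonzero and semisimple, I would fix a simple object $M\in\cD$; because $\cD$ is indecomposable, $M$ is automatically a generator. For $N\in\cD$ I would define the internal Hom $\inHom(M,N)\in\cC$ by the adjunction
$$
\cC(U,\inHom(M,N))\cong\cD(M\otimes U,N)\qquad(U\in\cC),
$$
so that by semisimplicity of $\cC$ one has $\inHom(M,N)\cong\bigoplus_i\cD(M\otimes U_i,N)\otimes U_i$. Here properness is exactly what is needed: using rigidity, $\cD(M\otimes U_i,N)\cong\cD(M,N\otimes U_{\bar i})$ vanishes for all but finitely many $i$, so the sum is finite and $\inHom(M,N)$ is an honest object of $\cC$ rather than of the ind-category. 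The evaluation morphisms $M\otimes\inHom(M,N)\to N$ arise from the identity under the adjunction.

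Next I would put $A=\inEnd(M)=\inHom(M,M)$ and equip it with the algebra structure whose product $A\otimes A\to A$ is induced by composing evaluation morphisms and whose unit $\un\to A$ corresponds to $\iota_M\in\cD(M,M)$. Setting $M=N$ in the composition turns $\inHom(M,N)$ into a left $A$-module, so that $N\mapsto\inHom(M,N)$ defines a functor $\mathcal F\colon\cD\to\lmodcatin{A}{\cC}$. Rigidity of $\cC$ gives natural isomorphisms $\inHom(M,N\otimes U)\cong\inHom(M,N)\otimes U$ (through the identity $\cD(M\otimes V,N\otimes U)\cong\cD(M\otimes V\otimes\bar U,N)$), which make $\mathcal F$ a right $\cC$-module functor. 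To promote $A$ to a $Q$-system I would use the C$^*$-structure: the coproduct $m^*$ is built from the product together with the standard solutions of the conjugate equations in $\cC$, and one verifies the Frobenius identity. Since $\cC(\un,A)\cong\cD(M,M)=\C\iota_M$, the algebra $A$ is irreducible, hence an irreducible C$^*$-Frobenius algebra and therefore a $Q$-system; being irreducible it is automatically standard, and by Theorem~\ref{thm:standard} and Proposition~\ref{prop:standard-iso} this standard irreducible $Q$-system is the one demanded by the statement.

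It then remains to see that $\mathcal F$ is an equivalence. Full faithfulness follows from the adjunction for objects of the form $M\otimes U$ and then extends to all of $\cD$ because $M$ generates; essential surjectivity follows since every left $A$-module is isomorphic to $\inHom(M,N)$ for a suitable $N$, again using generation and semisimplicity. I expect the main obstacle to be unitarity, which is precisely the point the appendix is meant to clarify: one must check that, with the inner products coming from the standard categorical traces, $m^*$ really is the adjoint of the product (so that $A$ is a bona fide C$^*$-Frobenius algebra, not merely an algebra), that $\mathcal F$ is $*$-preserving, and that both the $A$-module structure morphisms and the coherence isomorphisms $\inHom(M,N\otimes U)\cong\inHom(M,N)\otimes U$ are unitary. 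These unitarity verifications, rather than the purely algebraic skeleton provided by Ostrik's theorem, carry the real content of the result.
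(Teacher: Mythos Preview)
Your proposal is correct and follows essentially the same approach as the paper: fix a simple generator, form the internal Hom $\inHom(M,N)$ (the paper writes this as $\bar{X}Y=\bigoplus_i\cD(X,Y\bar U_i)\otimes U_i$), and then verify the unitarity statements you list. The paper's main technical point, which you correctly anticipate as the real content, is that a specific normalization of the adjunction isomorphism (a factor $d_i^{1/2}$) makes the module-functor isomorphisms $\bar X_2$ unitary and forces $\mu_Y\mu_Y^*=d(\bar XX)\iota_Y$, so that $\bar XX$ is automatically a standard irreducible $Q$-system and the module structures on $\bar XY$ are unitary.
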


This is an adaptation to the infinite C$^*$-setting of a result of
Ostrik~\cite{MR1976459}*{Theorem~3.1}. It is certainly known to
experts, see, e.g.,~\cite{arXiv:1511.07982}*{Section~3},
but the precise details seem to be somewhat elusive in the
literature. The main point is to show that, if $\cD$ is a proper
right module category over $\cC$, the unitary structure induces the
structure of a $Q$-system on the internal endomorphism object
$\inEnd(X)$ for any $X$, cf.~\cite{MR2909758}*{p.~625}. This would
imply that $\cD$ is a part of a rigid C$^*$-bicategory which has
$\cC$ in one of its diagonal corners.

\smallskip

Fix representatives $(U_i)_i$ and $(X_a)_a$ of the isomorphism
classes of simple objects in $\cC$ and $\cD$ respectively. For any
$X \in \cD$, we always consider $\cD(X_a, X)$ and $\cD(X, X_a)$ as
Hilbert spaces equipped with the scalar products
\begin{align*}
(S, T)_{\cD(X_a, X)} \iota_a &= T^* S,&
(S, T)_{\cD(X, X_a)} \iota_a &= S T^*.
\end{align*}
More generally, for $X, Y \in \cD$, we consider $\cD(X, Y)$ as a
Hilbert space via the identification
$$
\cD(X, Y) \cong \bigoplus_a \cD(X, X_a) \otimes \cD(X_a, Y).
$$
This way the functor $Y \mapsto \cD(X, Y)$ is a C$^*$-functor from
$\cD$ to $\Hilb_f$ for any $X$.

The dual module category of $\cD$ is given by the C$^*$-category of
right $\cC$-module functors ${\Hom}_{\cC}(\cD, \cC)$. We have
canonical pairings ${\Hom}_{\cC}(\cD, \cC) \times \cD \to \cC$ and
$$
\cD \times {\Hom}_{\cC}(\cD, \cC) \to {\End}_{\cC}(\cD), \quad (X,
F) \mapsto (Y \mapsto X F (Y)).
$$
Fix a simple object $X$ in $\DD$. We define a `dual' of $X \in \cD$
as an object in ${\Hom}_\cC(\cD, \cC)$ by
$$
\bar{X} Y = \bigoplus_i \cD(X, Y \bar{U}_i) \otimes U_i,
$$
where we write $\bar XY$ instead of $\bar X(Y)$ and make use of a
unitary bifunctor $\Hilb_f \times \cC \to \cC$ characterized by
$$\cC(H \otimes U, K \otimes V) \cong B(H, K) \otimes \cC(U, V).$$
The object $\bar XY$ is well-defined by the properness assumption on
$\cD$. The functor $\bar X$ is adjoint to the functor $\CC\to\DD$,
$U\mapsto XU$, via the natural isomorphisms
\begin{equation}\label{eq:barX-as-internal-hom}
\begin{split}
\theta_{X,Y,U} \colon \cC(U, \bar{X}Y) = \bigoplus_i \cD(X, Y \bar{U}_i) \otimes \cC(U, U_i) \to \cD(X U, Y),\\
\cD(X, Y \bar{U}_i) \otimes \cC(U, U_i)\ni S \otimes T \mapsto d_i^{1/2} (\iota \otimes R_i^*) (S \otimes T),
\end{split}
\end{equation}
for $U \in \cC$. In other words, $\bar XY$ is the internal Hom
object $\inHom(X,Y)$. Here and below we identify $U$ with $\C\otimes
U$, so that when $H$ is a finite dimensional Hilbert space the space of morphisms $U\to H\otimes V$ equals $B(\C,H)\otimes\CC(U,V)=H\otimes\CC(U,V)$. The
natural isomorphisms $(\bar{X}_2)_{Y, V}\colon (\bar{X}Y) V \to
\bar{X}(Y V)$ are characterized by commutativity of the diagrams
$$
\begin{tikzcd}
  \cC(U, (\bar{X}Y) V) \ar[rr,"T\mapsto \bar{X}_2T"] \ar[d,"T\mapsto(\iota\otimes\bar{R}_V^*)(T\otimes\iota)"'] & & \cC(U, \bar{X}(Y V)) \ar[d,"\theta"]\\
  \cC(U \bar{V}, \bar{X} Y) \ar[rd,"\theta"'] & & \cD(X U, Y V) \ar[dl,"S\mapsto(\iota\otimes\bar{R}_V^*)(S\otimes\iota)"]\\
  & \cD(X U \bar{V}, Y)
\end{tikzcd}
$$
for $U \in \cC$. From this characterization we have $(\bar{X}_2)_{Y
U, V} ((\bar{X}_2)_{Y, U}\otimes\iota_V) = (\bar{X}_2)_{Y, U V}$ as
morphisms from $(\bar{X}Y) U V$ to $\bar{X}(Y U V)$.

Denote by $\mu_Y \colon X \bar{X} Y \to Y$ the morphism which
corresponds to $\iota_{\bar{X} Y}$ under
isomorphism~\eqref{eq:barX-as-internal-hom} for $U = \bar{X} Y$.
Then the morphism $\bar X(\mu_X)(\bar X_2)_{X,\bar XX}\colon (\bar X
X)(\bar X X)\to\bar XX$ defines an algebra structure on $\bar XX$
with unit $\iota_X\otimes\iota_\un\in
\DD(X)\otimes\CC(\un)\subset\CC(\un,\bar XX)$. Furthermore, the
morphism $\bar X(\mu_Y)(\bar X_2)_{X,\bar XY}\colon (\bar X X)(\bar
X Y)\to\bar XY$ defines a left $\bar{X} X$-module structure on
$\bar{X} Y$. Then the functor $Y \mapsto \bar{X} Y$ extends to an
equivalence between~$\DD$ and the category of left $\bar XX$-modules
in $\CC$ (without any compatibility with the $*$-structures for the
moment)~\citelist{\cite{MR1976459}\cite{MR3242743}*{Section~7.9}}.
It remains to show that $\bar{X} X$ is an irreducible $Q$-system and
that the $\bar{X} X$-module structure on $\bar{X} Y$ is unitary.

\begin{lemma}
The morphism $(\bar{X}_2)_{Y,V}$ is given by
$$
\sum_{i,j} \left(\frac{d_i}{d_j}\right)^{1/2}  F_\beta \otimes v_\beta^*\colon \bigoplus_i \cD(X, Y \bar{U}_i) \otimes U_i V \to \bigoplus_j \cD(X, Y V \bar{U}_j) \otimes U_j,
$$
where $(v_\beta\colon U_j \to U_i V)_\beta$ is an orthonormal basis of isometries, and $F_\beta$ is the map
$$
\cD(X, Y \bar{U}_i)  \to \cD(X, Y V \bar{U}_j), \quad S \mapsto
(\iota \otimes R_i^* \otimes \iota_{V \bar U_{j}}) (S \otimes
v_\beta \otimes \iota_{\bar{\jmath}}) (\iota \otimes \bar{R}_j).
$$
\end{lemma}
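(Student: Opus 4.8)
The plan is to verify that the displayed formula for $(\bar X_2)_{Y,V}$ satisfies the identity that characterizes it and then to invoke uniqueness. Writing out the characterizing diagram, the morphism $\Xi:=(\bar X_2)_{Y,V}$ is the unique one such that $(\iota_Y\otimes\bar R_V^*)\big(\theta_{X,YV,U}(\Xi\,T)\otimes\iota_{\bar V}\big)=\theta_{X,Y,U\bar V}\big((\iota_{\bar X Y}\otimes\bar R_V^*)(T\otimes\iota_{\bar V})\big)$ for all $U\in\cC$ and $T\in\cC(U,(\bar X Y)V)$. Uniqueness holds because $\theta$ is bijective and the map $\cD(XU,YV)\to\cD(XU\bar V,Y)$, $S\mapsto(\iota_Y\otimes\bar R_V^*)(S\otimes\iota_{\bar V})$, is injective, its left inverse being $S'\mapsto(S'\otimes\iota_V)(\iota_{XU}\otimes R_V)$ by the conjugate equations. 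Since $\cC(U,(\bar X Y)V)=\bigoplus_i\cD(X,Y\bar U_i)\otimes\cC(U,U_iV)$, it is enough to check this on $T=S\otimes t$ lying in a single summand, with $S\in\cD(X,Y\bar U_i)$ and $t\in\cC(U,U_iV)$.

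For the right-hand side I would note that $T\mapsto(\iota_{\bar X Y}\otimes\bar R_V^*)(T\otimes\iota_{\bar V})$ stays in the $i$-th summand and replaces $t$ by $t':=(\iota_{U_i}\otimes\bar R_V^*)(t\otimes\iota_{\bar V})\in\cC(U\bar V,U_i)$, so the explicit form of $\theta_{X,Y,U\bar V}$ yields $d_i^{1/2}(\iota_Y\otimes R_i^*)(S\otimes t')$ in $\cD(XU\bar V,Y)$.

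For the left-hand side I would feed $T=S\otimes t$ into $\sum_{i,j}(d_i/d_j)^{1/2}F_\beta\otimes v_\beta^*$, obtaining in the $j$-th summand $\sum_\beta(d_i/d_j)^{1/2}F_\beta(S)\otimes(v_\beta^* t)$; applying $\theta_{X,YV,U}$ contributes a factor $d_j^{1/2}$, and since $(d_i/d_j)^{1/2}d_j^{1/2}=d_i^{1/2}$ the overall scalar is again $d_i^{1/2}$. After substituting the definition of $F_\beta$ and post-composing with $S''\mapsto(\iota_Y\otimes\bar R_V^*)(S''\otimes\iota_{\bar V})$, the isometry $v_\beta$ appearing inside $F_\beta(S)$ and the adjoint $v_\beta^*$ coming from the tensor structure sit on a $U_j$-strand that is capped by the pair $(\bar R_j,R_j^*)$. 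The crux is to collapse this $U_j$-loop via the conjugate equations for $(R_j,\bar R_j)$, which straightens the strand and puts $v_\beta$ directly after $v_\beta^* t$; summing then invokes the completeness relation $\sum_{j,\beta}v_\beta v_\beta^*=\iota_{U_iV}$, so all $j$-dependence disappears and $v_\beta^* t$ becomes $t$. What remains is $(\iota_Y\otimes\bar R_V^*)\big(\big((\iota_Y\otimes R_i^*\otimes\iota_V)(S\otimes t)\big)\otimes\iota_{\bar V}\big)$.

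Finally I would observe that this matches the right-hand side: the contraction $R_i^*$ on the $\bar U_iU_i$-legs and the contraction $\bar R_V^*$ on the $V\bar V$-legs act on disjoint tensor factors, hence commute by bifunctoriality of $\otimes$, and performing them in the other order rewrites the surviving expression as $d_i^{1/2}(\iota_Y\otimes R_i^*)(S\otimes t')$. The main obstacle is the bookkeeping in the left-hand side computation: tracking all the identity legs while contracting the $U_j$-loop and making sure the completeness relation for $\{v_\beta\}$ is applied in precisely the right spot. This is cleanest if carried out as a string-diagram manipulation rather than with nested tensor symbols.
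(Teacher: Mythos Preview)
Your proposal is correct and follows essentially the same approach as the paper: both verify the candidate formula by chasing a generic element $S\otimes t\in\cD(X,Y\bar U_i)\otimes\cC(U,U_iV)$ around the characterizing diagram, reducing the top-right path via the conjugate equations for $U_j$ and the completeness relation $\sum_{j,\beta}v_\beta v_\beta^*=\iota_{U_iV}$ to the expression obtained along the left-bottom path. Your explicit justification of uniqueness (injectivity of $S\mapsto(\iota\otimes\bar R_V^*)(S\otimes\iota)$ via the conjugate equations) is a detail the paper leaves implicit, but otherwise the arguments coincide.
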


\begin{proof}
It is enough to check the commutativity of the above diagram. Let
$$
S \otimes T \in \cD(X, Y \bar{U}_i) \otimes \cC(U, U_i V) \subset
\cC(U, (\bar{X} Y) V).
$$
Chasing this element along the arrows on the left, we obtain
$$
d_i^{1/2} (\iota \otimes R_i^* \otimes \bar{R}_V^*) (S \otimes T \otimes \iota_{\bar{V}}) \in \cD(X U \bar{V}, Y).
$$
On the other hand, chasing the top and right arrows, we obtain
\begin{multline*}
\sum_{j, \beta} d_i^{1/2} (\iota \otimes R_i^* \otimes
\bar{R}_{V}^*) (S \otimes v_\beta \otimes R^*_j(\iota_{\bar{\jmath}}
\otimes v_\beta^* T) \otimes \iota_{\bar{V}}) (\iota_X \otimes
\bar{R}_j \otimes \iota_{U\bar V})\\
=\sum_{j, \beta} d_i^{1/2} (\iota \otimes R_i^* \otimes
\bar{R}_{V}^*) (S \otimes v_\beta v_\beta^* T \otimes
\iota_{\bar{V}})=d_i^{1/2} (\iota \otimes R_i^* \otimes \bar{R}_V^*)
(S \otimes T \otimes \iota_{\bar{V}}),
\end{multline*}
which proves the assertion.
\end{proof}

We can now show that $\bar{X}_2$ is unitary thanks to the normalization of \eqref{eq:barX-as-internal-hom}.

\begin{lemma}
\label{lem:barX-2-is-unitary}
The morphism $\bar{X}_2$ is unitary, and its inverse is given by
$$
\sum_{i,j} \left(\frac{d_i}{d_j}\right)^{1/2}  G_\beta \otimes v_\beta \colon \bigoplus_j \cD(X, Y V \bar{U}_j) \otimes U_j \to \bigoplus_i \cD(X, Y \bar{U}_i) \otimes U_i V,
$$
where $G_\beta$ is the map
$$
\cD(X, Y V \bar{U}_j) \to \cD(X, Y \bar{U}_i), \quad T \mapsto
(\iota_{Y \bar{U}_i} \otimes \bar{R}_j^*) (\iota_{Y \bar{U}_i}
\otimes v_\beta^* \otimes \iota_{\bar{\jmath}}) (\iota_Y \otimes R_i
\otimes \iota_{V \bar{U}_j}) T.
$$
\end{lemma}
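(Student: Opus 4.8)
The plan is to establish both assertions at once by showing that the displayed map, call it $\Psi$, is precisely the adjoint $(\bar{X}_2)^*$ of $\bar{X}_2$ in $\cC$, and that $\bar{X}_2$ is an isometry; since $\bar{X}_2$ is invertible by its construction via the adjunction $\theta$, an isometry is automatically unitary, and then $\Psi=(\bar{X}_2)^{-1}$. Two conventions are used throughout. Since $X$ is simple, each space $\cD(X,Z)$ carries the inner product $(S,T)\iota_X=T^*S$. Moreover, under the identification $\cC(H\otimes U,K\otimes V)\cong B(H,K)\otimes\cC(U,V)$ the involution of $\cC$ acts as the Hilbert space adjoint on the first tensor factor and as the categorical adjoint on the second; hence the block $F_\beta\otimes v_\beta^*$ of $\bar{X}_2$ has $\cC$-adjoint $F_\beta^*\otimes v_\beta$.

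The first step is to check that $G_\beta=F_\beta^*$ as maps $\cD(X,YV\bar{U}_j)\to\cD(X,Y\bar{U}_i)$, which then identifies the block $F_\beta^*\otimes v_\beta$ of $(\bar{X}_2)^*$ with the block $G_\beta\otimes v_\beta$ of $\Psi$ and so gives $\Psi=(\bar{X}_2)^*$. For this I would compute, for $S\in\cD(X,Y\bar{U}_i)$ and $T\in\cD(X,YV\bar{U}_j)$, the scalar $(F_\beta S,T)\iota_X=T^*(F_\beta S)$ and rewrite it using the interchange law in the form $(\iota_{Y\bar{U}_i}\otimes w)S=S\otimes w$ with $w=(v_\beta\otimes\iota_{\bar{U}_j})\bar{R}_j\colon\un\to U_iV\bar{U}_j$; regrouping the morphisms then yields $(G_\beta T)^*S=(S,G_\beta T)\iota_X$. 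This is a short planar manipulation with no dimension factors, so it is the easy part.

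It then remains to prove $(\bar{X}_2)^*\bar{X}_2=\iota$. I would compute this block by block: its $(i',i)$-component, as a morphism $\cD(X,Y\bar{U}_i)\otimes U_iV\to\cD(X,Y\bar{U}_{i'})\otimes U_{i'}V$, equals $\sum_{j,\beta,\beta'}(d_id_{i'})^{1/2}d_j^{-1}\,(G_{\beta'}F_\beta)\otimes(v_{\beta'}v_\beta^*)$, where $v_\beta\colon U_j\to U_iV$ and $v_{\beta'}\colon U_j\to U_{i'}V$. The key inputs are the completeness relation $\sum_{j,\beta}v_\beta v_\beta^*=\iota_{U_iV}$ and the orthonormality $v_\beta^*v_{\beta'}=\delta_{\beta\beta'}\iota_{U_j}$ for the isometries, together with the conjugate equations for $U_i$, $U_{i'}$ and $U_j$ (with standard solutions, so that $R_i^*R_i=d_i$), which collapse the chains of $R$'s and $\bar{R}$'s hidden inside $G_{\beta'}F_\beta$. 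The normalizing factors $d_i^{1/2}$ in \eqref{eq:barX-as-internal-hom}, surviving as the $(d_i/d_j)^{1/2}$ in $\bar{X}_2$, are exactly what cancel the dimension scalars produced when one closes these loops.

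I expect this last block computation to be the main obstacle. The difficulty is that the coefficients $G_{\beta'}F_\beta$ and the morphisms $v_{\beta'}v_\beta^*$ are individually nonzero even when $i\neq i'$, so the vanishing of the off-diagonal blocks and the appearance of $\iota_{\cD(X,Y\bar{U}_i)}\otimes\iota_{U_iV}$ on the diagonal must emerge simultaneously from the summation and the conjugate equations; the delicate bookkeeping is tracking precisely where each factor $d_i$ enters. Once $(\bar{X}_2)^*\bar{X}_2=\iota$ is in hand, either the entirely symmetric computation yields $\bar{X}_2(\bar{X}_2)^*=\iota$, or one simply invokes that $\bar{X}_2$ is already an isomorphism to conclude that the isometry $\bar{X}_2$ is unitary with inverse $\Psi=(\bar{X}_2)^*$, which is the stated formula.
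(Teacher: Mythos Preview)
Your plan is correct and matches the paper's proof: first identify $\Psi=(\bar X_2)^*$ via $G_\beta=F_\beta^*$, then verify $(\bar X_2)^*\bar X_2=\iota$ block by block, and conclude unitarity from the already-known invertibility of $\bar X_2$. For the step you flag as the obstacle, the paper's device is to note that $G_{\beta'}F_\beta$ acts on $S\in\cD(X,Y\bar U_i)$ as $S\mapsto(\iota_Y\otimes c_{\beta,\beta'})S$ for a morphism $c_{\beta,\beta'}\in\cC(\bar U_i,\bar U_{i'})$, which forces the off-diagonal blocks to vanish; on the diagonal one observes that the family $\bigl((d_i/d_j)^{1/2}(R_i^*\otimes\iota_{V\bar U_j})(\iota_{\bar i}\otimes v_\beta\otimes\iota_{\bar j})(\iota_{\bar i}\otimes\bar R_j)\bigr)_\beta$ is an orthonormal basis of isometries $\bar U_i\to V\bar U_j$, whose orthonormality gives $c_{\beta,\beta'}=\delta_{\beta,\beta'}(d_j/d_i)\iota_{\bar i}$, and then your completeness relation $\sum_{j,\beta}v_\beta v_\beta^*=\iota_{U_iV}$ finishes the diagonal computation.
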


\begin{proof}
Since $F_\beta$ can be written as
$$
F_\beta(S) = (\iota \otimes R_i^* \otimes \iota_{V \bar{U}_j}) (\iota_{Y \bar{U}_i} \otimes v_\beta \otimes \iota_{\bar{\jmath}}) (\iota_{Y \bar{U}_i} \otimes \bar{R}_j) S,
$$
the morphism $\bar{X}^*_2$ is indeed given by the formula in the
formulation. It remains to show that $\bar{X}_2$ is an isometry.

By the above formula, the component of $\bar{X}_2^* \bar{X}_2$ for $\cD(X, Y \bar{U}_i) \otimes U_i V \to \cD(X, Y \bar{U}_{i'}) \otimes U_{i'} V$ is given by
$$
\sum_{j, \beta, \gamma} \frac{(d_i d_{i'})^{1/2}}{d_j} H_{\beta, \gamma} \otimes w_{\gamma} v_\beta^*,
$$
where $(w_{\gamma}\colon U_j \to U_{i'} V)_{\gamma}$ is an orthonormal basis, and $H_{\beta,\gamma}$ is the linear map
\begin{gather*}
\cD(X, Y \bar{U}_i) \to \cD(X, Y \bar{U}_{i'}), \\ S \mapsto
(\iota_{Y \bar{U}_{i'}} \otimes \bar{R}_j^*) (\iota_{Y \bar{U}_{i'}}
\otimes w_\gamma^* \otimes \iota_{\bar{\jmath}}) (\iota_Y \otimes R_{i'}
R_i^* \otimes \iota_{V \bar{U}_j}) (\iota_{Y \bar{U}_i} \otimes
v_\beta \otimes \iota_{\bar{\jmath}}) (S \otimes \bar{R}_j).
\end{gather*}
Since $(\iota_{\bar{\imath}'} \otimes \bar{R}_j^*) (\iota_{\bar{\imath}'}
\otimes w_\gamma^* \otimes \iota_{\bar{\jmath}}) (R_{i'} R_i^* \otimes
\iota_{V \bar{U}_j}) (\iota_{\bar{\imath}} \otimes v_\beta \otimes
\iota_{\bar{\jmath}}) (\iota_{\bar{\imath}} \otimes \bar{R}_j)$ is a morphism
from $U_i$ to $U_{i'}$, the only nonzero terms are for $i' = i$.
Moreover, it is easy to see that the family
$$
\left(\left(\frac{d_i}{d_j}\right)^{1/2} (R_i^* \otimes \iota_{V \bar{U}_j}) (\iota_{\bar{\imath}} \otimes v_\beta \otimes \iota_{\bar{\jmath}}) (\iota_{\bar{\imath}} \otimes \bar{R}_j) \right)_\beta
$$
forms an orthonormal basis of isometries $\bar{U}_i \to V
\bar{U}_j$. It follows that if (for $i'=i$) we take
$(w_\gamma)_\gamma=(v_\beta)_\beta$, then $(d_i / d_j)
H_{\beta,\beta}(S) = S$. Thus, we see that $\bar{X}_2^* \bar{X}_2$
indeed acts as the identity morphism on the direct summand $\cD(X, Y
\bar{U}_i) \otimes U_i V$.
\end{proof}

\begin{lemma}
\label{lem:frob-recip-on-mod-and-inn-prod} The linear isomorphism
$$
\cD(X_a, X_b U) \to \cD(X_a \bar{U}, X_b), \quad T \mapsto
\frac{d(\bar XX_b)^{1/2}}{d(\bar XX_a)^{1/2}}(\iota_b \otimes
\bar{R}_U^*) (T \otimes \iota_{\bar{U}}),
$$
is unitary for any $U\in\CC$.
\end{lemma}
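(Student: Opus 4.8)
The two Hilbert space structures in the statement are, by definition, $(S,T)_{\cD(X_a, X_b U)}\iota_a = T^*S$ and $(S',T')_{\cD(X_a\bar{U}, X_b)}\iota_b = S'T'^*$, the first normalized against the simple object $X_a$ and the second against $X_b$. Writing $c = (d(\bar{X} X_b)/d(\bar{X} X_a))^{1/2}$ and $\Phi(T) = c(\iota_b\otimes\bar{R}_U^*)(T\otimes\iota_{\bar{U}})$ for the map in question, my first step is to record that $\Phi$ is a bijection with inverse $\Psi(T') = c^{-1}(T'\otimes\iota_U)(\iota_{X_a}\otimes R_U)$: the identity $\Psi\Phi = \iota$ is immediate from the conjugate equation $(\bar{R}_U^*\otimes\iota_U)(\iota_U\otimes R_U) = \iota_U$, and $\Phi\Psi = \iota$ follows from the other conjugate equation. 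Thus it suffices to prove that $\Phi$ is isometric, or equivalently that $\Phi^* = \Psi$.

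Unwinding the inner products, $\Phi^* = \Psi$ amounts to the single scalar identity $\lambda_2 = c^2\lambda_1$, to hold for all $T\in\cD(X_a, X_b U)$ and all $G\in\cD(X_b, X_a\bar{U})$ (with $G$ playing the role of $T'^*$), where $\lambda_1,\lambda_2$ are defined by
\begin{align*}
\lambda_1\iota_b &= (\iota_b\otimes\bar{R}_U^*)(T\otimes\iota_{\bar{U}})G, &
\lambda_2\iota_a &= (\iota_a\otimes R_U^*)(G\otimes\iota_U)T.
\end{align*}
These are the two ways of closing the pair $(T,G)$ against the cup and cap of $U$, landing in $\cD(X_b)=\C\iota_b$ and $\cD(X_a)=\C\iota_a$ respectively; the whole problem is to compare them.

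To do so I would transport everything into $\cC$ by the internal Hom functor $\bar{X}$. Applying $\bar{X}$ to the two displayed endomorphisms and using the characterizing property and naturality of the module structure $\bar{X}_2$ to pull the $\cC$-morphisms $\bar{R}_U^*$, $R_U$ and the identities $\iota_{\bar{U}}$, $\iota_U$ through $\bar{X}$, together with the coherence $(\bar{X}_2)_{Y U, V}((\bar{X}_2)_{Y,U}\otimes\iota_V) = (\bar{X}_2)_{Y,U V}$, one obtains genuine loops in $\cC$: there are morphisms $a\colon \bar{X} X_a\to(\bar{X} X_b) U$ and $b\colon \bar{X} X_b\to(\bar{X} X_a)\bar{U}$, built from $\bar{X} T$, $\bar{X} G$ and the unitaries $\bar{X}_2$ of Lemma~\ref{lem:barX-2-is-unitary}, such that
\begin{align*}
(\iota_{\bar{X} X_b}\otimes\bar{R}_U^*)(a\otimes\iota_{\bar{U}})b &= \lambda_1\iota_{\bar{X} X_b}, &
(\iota_{\bar{X} X_a}\otimes R_U^*)(b\otimes\iota_U)a &= \lambda_2\iota_{\bar{X} X_a},
\end{align*}
the scalars being unchanged simply because $\bar{X}$ is a functor, so that $\bar{X}(\lambda\iota_{X}) = \lambda\iota_{\bar{X} X}$. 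Taking the categorical traces of these two endomorphisms in $\cC$ and invoking the trace property together with sphericity of $\cC$, both computations yield the same closed diagram, whence $\lambda_1\, d(\bar{X} X_b) = \lambda_2\, d(\bar{X} X_a)$. This is exactly $\lambda_2 = c^2\lambda_1$, which finishes the proof.

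The step I expect to require the most care is this passage into $\cC$: one must verify that the module-functor structure $\bar{X}_2$ and its coherence convert the two module-theoretic closures into the two trace-closures of one and the same pair $(a,b)$ of $\cC$-morphisms, so that sphericity of $\cC$ applies verbatim. I would emphasize that this argument uses $\bar{X}_2$ only as a natural isomorphism and does not require $\bar{X}$ to be $*$-preserving, since only functoriality is invoked to track the scalars $\lambda_1,\lambda_2$; this is the reason the lemma can be proved before the $*$-compatibility of the $\bar{X} X$-module structure has been established.
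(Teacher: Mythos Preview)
Your proof is correct and follows essentially the same route as the paper: both arguments transport the problem into $\cC$ by applying the internal Hom functor $\bar X$, use the coherence of the module-functor structure $\bar X_2$ to rewrite the resulting morphism, and then extract the scalar via the categorical trace, with the dimension ratio appearing from sphericity/cyclicity of $\Tr$ in $\cC$. The only cosmetic difference is that the paper computes the inner product $(\tilde S,\tilde T)$ of the images directly and compares it to $(S,T)$, whereas you repackage the same computation as the identity $\Phi^*=\Psi$ between two closures $\lambda_1$ and $\lambda_2$; the underlying manipulation with $\bar X_2$ and the trace is the same, and, as you correctly note, only invertibility (not unitarity) of $\bar X_2$ is used.
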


Note that by the indecomposability assumption the object $\bar{X}
X_a$ is nonzero for any $a$, so the formulation makes sense.

\begin{proof}
Put $\tilde{T} = (\iota_b \otimes \bar{R}_U^*) (T \otimes
\iota_{\bar{U}})$. By definition of $(\tilde{S}, \tilde{T})$, we
have
$$
\bar{X}((\iota_b \otimes \bar{R}_U^*) (S \otimes
\iota_{\bar{U}}))(T^* \otimes \iota_{\bar{U}}) (\iota_b \otimes
\bar{R}_U)) = (\tilde{S}, \tilde{T}) \iota_{\bar{X} X_b}.
$$
Using the module functor structure on $\bar X$, the left hand side
can be written as
$$
(\iota_{\bar{X} X_b} \otimes \bar{R}_U^*) (\bar{X}_2)_{X_b, U
\bar{U}}^{-1} (\bar{X}_2)_{X_bU, \bar{U}} (\bar{X}(S T^*) \otimes
\iota_{\bar{U}}) (\bar{X}_2)_{X_bU, \bar{U}}^{-1} (\bar{X}_2)_{X_b,
U \bar{U}} (\iota_{\bar{X} X_b} \otimes \bar{R}_U),
$$
which is $(\iota_{\bar{X} X_b} \otimes \bar{R}_U^*)
((\bar{X}_2)_{X_b, U }^{-1}\bar{X}(S T^*) (\bar{X}_2)_{X_b, U }
\otimes \iota_{\bar{U}}) (\iota_{\bar{X} X_b} \otimes \bar{R}_U)$ by
the multiplicativity of $(\bar{X}_2)_{Y,U}$ in $U$. Thus, the scalar
$(\tilde{S}, \tilde{T})$ can be extracted by applying the
categorical trace:
\begin{align*}
(\tilde{S}, \tilde{T}) &= \tr_{\bar{X} X_b} \left( (\iota_{\bar{X} X_b} \otimes \bar{R}_U^*) ((\bar{X}_2)_{X_b, U }^{-1}\bar{X}(S T^*) (\bar{X}_2)_{X_b, U } \otimes \iota_{\bar{U}}) (\iota_{\bar{X} X_b} \otimes \bar{R}_U) \right) \\
&= \frac{1}{d(\bar{X} X_b)} \Tr_{(\bar{X}X_b) U} \left(
(\bar{X}_2)_{X_b, U }^{-1}\bar{X}(S T^*) (\bar{X}_2)_{X_b, U }
\right)\\
&=\frac{1}{d(\bar{X} X_b)} \Tr_{\bar{X} X_a} (\bar{X}(T^*)
\bar{X}(S) ) = \frac{d(\bar{X} X_a)}{d(\bar{X} X_b)} (S, T),
\end{align*}
which proves the assertion.
\end{proof}

\begin{proposition}
\label{prop:mu-Y-is-coisometry} We have $\mu_Y \mu_Y^* = d(\bar{X}
X) \iota_Y$ for any $Y \in \cD$.
\end{proposition}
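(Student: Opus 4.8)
The plan is to extract the scalar by pulling back a canonical trace from $\cC$. I would set $\omega_Y(f)=\Tr_{\bar{X}Y}(\bar{X}f)$ for $f\in\cD(Y)$; since $\bar{X}$ is a functor and $\Tr$ is a categorical trace, $\omega$ satisfies $\omega_Y(fg)=\omega_{Y'}(gf)$ and $\omega_Y(\iota_Y)=d(\bar{X}Y)$. The first step is to make $\mu_Y$ explicit. Choosing orthonormal bases $(S^i_\alpha)_\alpha$ of the Hilbert spaces $\cD(X,Y\bar{U}_i)$, the identity $\iota_{\bar{X}Y}$ decomposes along the orthogonal summands $U_i$ of $\bar{X}Y$, and the normalization in \eqref{eq:barX-as-internal-hom} gives $\mu_Y=\sum_{i,\alpha}d_i^{1/2}(\iota_Y\otimes R_i^*)(S^i_\alpha\otimes p^{i,\alpha})$, where $p^{i,\alpha}\colon\bar{X}Y\to U_i$ projects onto the corresponding copy. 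Orthonormality of the $p^{i,\alpha}$ kills all cross terms, so $\mu_Y\mu_Y^*=\sum_{i,\alpha}d_i\,T^i_\alpha(T^i_\alpha)^*$ with $T^i_\alpha=(\iota_Y\otimes R_i^*)(S^i_\alpha\otimes\iota_{U_i})\colon XU_i\to Y$; equivalently $T^i_\alpha(T^i_\alpha)^*=\mathrm{pTr}_{\bar{U}_i}(S^i_\alpha(S^i_\alpha)^*)$, where $\mathrm{pTr}_{\bar{U}_i}(h)=(\iota_Y\otimes R_i^*)(h\otimes\iota_{U_i})(\iota_Y\otimes R_i)$ is the partial categorical trace over $\bar{U}_i$.

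Rather than prove $\mu_Y\mu_Y^*=d(\bar{X}X)\iota_Y$ outright, I would establish the identity $\omega_Y(\mu_Y\mu_Y^*\,g)=d(\bar{X}X)\,\omega_Y(g)$ for every $g\in\cD(Y)$ and then invoke non-degeneracy. The key input is that $\omega$ is compatible with partial categorical traces, $\omega_Y(\mathrm{pTr}_{\bar{U}_i}(h))=\omega_{Y\bar{U}_i}(h)$. This is where Lemma~\ref{lem:barX-2-is-unitary} enters: the unitary module structure $\bar{X}_2\colon(\bar{X}Y)\bar{U}_i\to\bar{X}(Y\bar{U}_i)$ intertwines $\mathrm{pTr}_{\bar{U}_i}$ with the partial trace in $\cC$ and yields $d(\bar{X}(Y\bar{U}_i))=d(\bar{X}Y)d_i$. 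Granting this, and using that $X$ is simple so that $(S^i_\alpha)^*(g\otimes\iota_{\bar{U}_i})S^i_\alpha=c^i_\alpha(g)\iota_X$ is a scalar, the trace property gives $\omega_Y(T^i_\alpha(T^i_\alpha)^*g)=\omega_{Y\bar{U}_i}(S^i_\alpha(S^i_\alpha)^*(g\otimes\iota_{\bar{U}_i}))=\omega_X((S^i_\alpha)^*(g\otimes\iota_{\bar{U}_i})S^i_\alpha)=d(\bar{X}X)\,c^i_\alpha(g)$. Summing over $i,\alpha$ and recognizing $\sum_{i,\alpha}d_i\,c^i_\alpha(g)=\Tr_{\bar{X}Y}(\bar{X}g)=\omega_Y(g)$, which is a direct expansion of the categorical trace of $\bar{X}g$ in the chosen bases, delivers the identity.

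To conclude, I would show that the bilinear form $(a,b)\mapsto\omega_Y(ab)$ on the finite-dimensional C$^*$-algebra $\cD(Y)$ is non-degenerate, which forces $\mu_Y\mu_Y^*-d(\bar{X}X)\iota_Y=0$. For this it suffices that $\omega_Y$ is nonzero on each simple block: if $s\colon X_b\to Y$ is an isometry onto a copy of a simple object $X_b$, then by the trace property $\omega_Y(ss^*)=\Tr_{\bar{X}X_b}(\bar{X}(s^*s))=\Tr_{\bar{X}X_b}(\iota_{\bar{X}X_b})=d(\bar{X}X_b)>0$, the positivity being guaranteed by indecomposability of $\cD$ (so $\bar{X}X_b\neq0$). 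This step is robust in that it uses only functoriality of $\bar{X}$ and the trace property, not any compatibility of $\bar{X}$ with adjoints.

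The delicate point, and the step I expect to be the main obstacle, is exactly the partial-trace compatibility of $\omega$: at this stage $\bar{X}$ is only known to be an equivalence of plain categories and is \emph{not} yet known to be a $*$-functor, so one cannot simply write $\bar{X}(f^*)=\bar{X}(f)^*$. The whole design of the argument, namely working through the trace $\omega$ and testing against all $g$ instead of trying to split $\mu_Y$ over the simple summands of $Y$ (which would require $\bar{X}$ to preserve adjoints), is meant to circumvent this gap, reducing every appeal to the $*$-structure to the trace property of $\Tr$ in $\cC$ together with the unitarity of $\bar{X}_2$ from Lemma~\ref{lem:barX-2-is-unitary}.
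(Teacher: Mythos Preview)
Your argument is correct and takes a genuinely different route from the paper. The paper first reduces to simple $Y=X_b$, writes out $\mu_b\mu_b^*$ as a sum of terms $(\iota_b\otimes R_i^*)(u_{\alpha'}u_\alpha^*\otimes\iota_i)(\iota_b\otimes R_i)$, and then invokes the separately proved Lemma~\ref{lem:frob-recip-on-mod-and-inn-prod} to evaluate each such term as the scalar $\delta_{\alpha\alpha'}\,d(\bar X_aX_a)/d(\bar X_aX_b)$; summing over $i,\alpha$ gives $d(\bar X_aX_a)\iota_b$ directly. Your approach instead tests $\mu_Y\mu_Y^*$ against the pulled-back trace $\omega_Y=\Tr_{\bar XY}\circ\bar X$, proves the trace identity $\omega_Y(\mu_Y\mu_Y^*g)=d(\bar XX)\omega_Y(g)$ via partial-trace compatibility of $\omega$, and concludes by nondegeneracy. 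In effect you absorb the content of Lemma~\ref{lem:frob-recip-on-mod-and-inn-prod} into the single identity $\omega_Y\circ\mathrm{pTr}_{\bar U_i}=\omega_{Y\bar U_i}$; the paper's proof of that lemma is itself a trace computation of the same flavour, so the two arguments share their analytic core but organise it differently. Your route is a bit more conceptual and works uniformly for nonsimple $Y$, at the cost of the extra nondegeneracy step; the paper's route is more explicit and yields the intermediate Frobenius-reciprocity norm as a standalone fact. One small remark: the partial-trace compatibility you need follows already from $\bar X_2$ being a natural \emph{isomorphism} together with the trace property $\Tr_A(u^{-1}fu)=\Tr_B(f)$ in $\cC$, so strictly speaking you use less than the full unitarity statement of Lemma~\ref{lem:barX-2-is-unitary}.
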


\begin{proof}
We may assume that $X = X_a$ and $Y = X_b$ for some $a, b$. We
identify
$$
\cC(\bar{X}_a X_b) = \bigoplus_{i,j} B(\cD(X_a, X_b \bar{U}_i),
\cD(X_a, X_b \bar{U}_j)) \otimes \cC(U_i, U_j)
$$
with
$$
\bigoplus_i \cD(X_a, X_b \bar{U}_i) \otimes \overline{\cD(X_a, X_b
\bar{U}_i)} \otimes \cC(U_i,U_j),
$$
so that $\iota_{\bar{X}_a X_b}$ is represented by $\sum_{i,\alpha}
u_\alpha \otimes \bar u_\alpha \otimes \iota_i$, where $(u_\alpha
\colon X_a \to X_b \bar{U}_i)_\alpha$ is an orthonormal basis of
isometries. Then we have
$$
\mu_b = \theta_{X_a, X_b, \bar{X}_a X_b}\left( \sum_{i,\alpha}
u_\alpha \otimes \bar u_\alpha \otimes \iota_i \right ) =
\sum_{i,\alpha} d_i^{1/2} (\iota_b \otimes R_i^*) (u_\alpha \otimes
\iota_{i}) (\iota_X \otimes (u_\alpha^* \otimes \iota_i) p_i),
$$
where $p_i \colon \bar{X}_a X_b \to \cD(X_a, X_b \bar{U}_i) \otimes U_i$ is the orthogonal projection onto the isotypic component for $U_i$. Hence $\mu_b^* \colon X_b \to X_a \bar{X}_a X_b$ is given by $\sum_{i,\alpha} d_i^{1/2} u_\alpha \otimes (u_\alpha^* \otimes \iota_{\bar{\imath}}) (\iota_b \otimes R_i)$. We thus have
$$
\mu_b \mu_b^* = \sum_{i, \alpha, \alpha'} d_i (\iota_b \otimes R_i^*) (u_{\alpha'} u_\alpha^* \otimes \iota_{i}) (\iota_b \otimes R_i).
$$
By Lemma~\ref{lem:frob-recip-on-mod-and-inn-prod}, we have $(\iota_b
\otimes R_i^*) (u_{\alpha'} u_\alpha^* \otimes \iota_{i}) (\iota_b
\otimes R_i) = \delta_{\alpha, \alpha'}\frac{d(\bar X_aX_a)}{d(\bar
X_a X_b)}\iota_b$. Hence
$$
\mu_b\mu_b^*=\sum_i\Big(d_i\frac{d(\bar X_aX_a)}{d(\bar X_a
X_b)}\dim\DD(X_a,X_b\bar U_i)\Big)\iota_b=d(\bar X_a X_a)\iota_b,
$$
which finishes the proof of the proposition.
\end{proof}

It follows that $\bar{X} X$ is a standard $Q$-system in $\cC$ and,
for any $Y \in \cD$, the $\bar{X} X$-module $\bar{X} Y$ satisfies
the unitarity condition. The $Q$-system $\bar XX$ is irreducible,
since $\CC(\un,\bar XX)$ is one-dimensional. Thus
Theorem~\ref{thm:proper-mod-equiv-to-Q-sys-mod} is proved.

\begin{bibdiv}
\begin{biblist}

\bib{arXiv:1410.6238}{misc}{
      author={Arano, Yuki},
       title={Unitary spherical representations of Drinfeld doubles},
         how={preprint},
        date={2014},
      eprint={\href{http://arxiv.org/abs/1410.6238}{{\tt arXiv:1410.6238
  [math.QA]}}},
         doi={10.1515/crelle-2015-0079},
        note={to appear in J.~Reine Angew.~Math.}
}

\bib{arXiv:1511.07982}{misc}{
      author={Arano, Yuki},
      author={De Commer, Kenny},
      title={Torsion-freeness for fusion rings and tensor C$^*$-categories},
         how={preprint},
        date={2015},
      eprint={\href{http://arxiv.org/abs/1511.07982}{{\tt arXiv:1511.07982
  [math.RA]}}},
}

\bib{MR2718205}{article}{
   author={Banica, Teodor},
   author={Curran, Stephen},
   author={Speicher, Roland},
   title={Classification results for easy quantum groups},
   journal={Pacific J. Math.},
   volume={247},
   date={2010},
   number={1},
   pages={1--26},
   issn={0030-8730},
   review={\MR{2718205}},
   doi={10.2140/pjm.2010.247.1},
}

\bib{MR3665403}{article}{
      author={Baum, Paul~F.},
      author={De~Commer, Kenny},
      author={Hajac, Piotr~M.},
       title={Free actions of compact quantum groups on unital
  {$C^*$}-algebras},
        date={2017},
        ISSN={1431-0635},
     journal={Doc. Math.},
      volume={22},
       pages={825\ndash 849},
      review={\MR{3665403}},
      eprint={\href{http://arxiv.org/abs/1304.2812}{{\tt arXiv:1304.2812
  [math.OA]}}},
}

\bib{MR2202309}{article}{
      author={Bichon, Julien},
      author={De~Rijdt, An},
      author={Vaes, Stefaan},
       title={Ergodic coactions with large multiplicity and monoidal
  equivalence of quantum groups},
        date={2006},
        ISSN={0010-3616},
     journal={Comm. Math. Phys.},
      volume={262},
      number={3},
       pages={703\ndash 728},
      eprint={\href{http://arxiv.org/abs/math/0502018}{{\tt arXiv:math/0502018
  [math.OA]}}},
         url={http://dx.doi.org/10.1007/s00220-005-1442-2},
         doi={10.1007/s00220-005-1442-2},
      review={\MR{2202309 (2007a:46072)}},
}

\bib{MR3308880}{book}{
      author={Bischoff, Marcel},
      author={Kawahigashi, Yasuyuki},
      author={Longo, Roberto},
      author={Rehren, Karl-Henning},
       title={Tensor categories and endomorphisms of von {N}eumann
  algebras---with applications to quantum field theory},
      series={Springer Briefs in Mathematical Physics},
   publisher={Springer, Cham},
        date={2015},
      volume={3},
        ISBN={978-3-319-14300-2; 978-3-319-14301-9},
         url={http://dx.doi.org/10.1007/978-3-319-14301-9},
         doi={10.1007/978-3-319-14301-9},
      review={\MR{3308880}},
}

\bib{MR3238527}{article}{
      author={De~Commer, Kenny},
      author={Freslon, Amaury},
      author={Yamashita, Makoto},
       title={C{CAP} for {U}niversal {D}iscrete {Q}uantum {G}roups},
        date={2014},
        ISSN={0010-3616},
     journal={Comm. Math. Phys.},
      volume={331},
      number={2},
       pages={677\ndash 701},
      eprint={\href{http://arxiv.org/abs/1306.6064}{{\tt arXiv:1306.6064
  [math.OA]}}},
         url={http://dx.doi.org/10.1007/s00220-014-2052-7},
         doi={10.1007/s00220-014-2052-7},
      review={\MR{3238527}},
}

\bib{MR3353033}{article}{
      author={De~Commer, Kenny},
      author={Timmermann, Thomas},
       title={Partial compact quantum groups},
        date={2015},
        ISSN={0021-8693},
     journal={J. Algebra},
      volume={438},
       pages={283\ndash 324},
      eprint={\href{http://arxiv.org/abs/1409.1685}{{\tt arXiv:1409.1685
  [math.QA]}}},
         url={http://dx.doi.org/10.1016/j.jalgebra.2015.04.039},
         doi={10.1016/j.jalgebra.2015.04.039},
      review={\MR{3353033}},
}

\bib{MR3141721}{article}{
      author={De~Commer, Kenny},
      author={Yamashita, Makoto},
       title={A {C}onstruction of {F}inite {I}ndex {C}*-algebra {I}nclusions
  from {F}ree {A}ctions of {C}ompact {Q}uantum {G}roups},
        date={2013},
        ISSN={0034-5318},
     journal={Publ. Res. Inst. Math. Sci.},
      volume={49},
      number={4},
       pages={709\ndash 735},
      eprint={\href{http://arxiv.org/abs/1201.4022}{{\tt arXiv:1201.4022
  [math.OA]}}},
         url={http://dx.doi.org/10.4171/PRIMS/117},
         doi={10.4171/PRIMS/117},
      review={\MR{3141721}},
}

\bib{MR3121622}{article}{
      author={De~Commer, Kenny},
      author={Yamashita, Makoto},
       title={Tannaka-{K}re\u\i n duality for compact quantum homogeneous
  spaces. {I}. {G}eneral theory},
        date={2013},
        ISSN={1201-561X},
     journal={Theory Appl. Categ.},
      volume={28},
       pages={No. 31, 1099\ndash 1138},
      eprint={\href{http://arxiv.org/abs/1211.6552}{{\tt arXiv:1211.6552
  [math.OA]}}},
      review={\MR{3121622}},
}

\bib{MR1760277}{article}{
   author={Ellwood, David Alexandre},
   title={A new characterisation of principal actions},
   journal={J. Funct. Anal.},
   volume={173},
   date={2000},
   number={1},
   pages={49--60},
   issn={0022-1236},
   review={\MR{1760277}},
   doi={10.1006/jfan.2000.3561},
}

\bib{MR3242743}{book}{
      author={Etingof, Pavel},
      author={Gelaki, Shlomo},
      author={Nikshych, Dmitri},
      author={Ostrik, Victor},
       title={Tensor categories},
      series={Mathematical Surveys and Monographs},
   publisher={American Mathematical Society, Providence, RI},
        date={2015},
      volume={205},
        ISBN={978-1-4704-2024-6},
         url={http://dx.doi.org/10.1090/surv/205},
         doi={10.1090/surv/205},
      review={\MR{3242743}},
}

\bib{MR2677836}{article}{
      author={Etingof, Pavel},
      author={Nikshych, Dmitri},
      author={Ostrik, Victor},
       title={Fusion categories and homotopy theory},
        date={2010},
        ISSN={1663-487X},
     journal={Quantum Topol.},
      volume={1},
      number={3},
       pages={209\ndash 273},
      eprint={\href{http://arxiv.org/abs/0909.3140}{{\tt arXiv:0909.3140
  [math.QA]}}},
         url={http://dx.doi.org/10.4171/QT/6},
         doi={10.4171/QT/6},
        note={With an appendix by Ehud Meir},
      review={\MR{2677836 (2011h:18007)}},
}

\bib{MR2735754}{article}{
      author={Etingof, Pavel},
      author={Nikshych, Dmitri},
      author={Ostrik, Victor},
       title={Weakly group-theoretical and solvable fusion categories},
        date={2011},
        ISSN={0001-8708},
     journal={Adv. Math.},
      volume={226},
      number={1},
       pages={176\ndash 205},
      eprint={\href{http://arxiv.org/abs/0809.3031}{{\tt arXiv:0809.3031
  [math.QA]}}},
         url={http://dx.doi.org/10.1016/j.aim.2010.06.009},
         doi={10.1016/j.aim.2010.06.009},
      review={\MR{2735754 (2012g:18010)}},
}

\bib{MR3084500}{article}{
      author={Freslon, Amaury},
       title={Examples of weakly amenable discrete quantum groups},
        date={2013},
        ISSN={0022-1236},
     journal={J. Funct. Anal.},
      volume={265},
      number={9},
       pages={2164\ndash 2187},
      eprint={\href{http://arxiv.org/abs/1207.1470v4}{{\tt arXiv:1207.1470v4
  [math.OA]}}},
         url={http://dx.doi.org/10.1016/j.jfa.2013.05.037},
         doi={10.1016/j.jfa.2013.05.037},
      review={\MR{3084500}},
}

\bib{MR2909758}{article}{
      author={Grossman, Pinhas},
      author={Snyder, Noah},
       title={Quantum subgroups of the {H}aagerup fusion categories},
        date={2012},
        ISSN={0010-3616},
     journal={Comm. Math. Phys.},
      volume={311},
      number={3},
       pages={617\ndash 643},
      eprint={\href{http://arxiv.org/abs/1102.2631}{{\tt arXiv:1102.2631
  [math.OA]}}},
         url={http://dx.doi.org/10.1007/s00220-012-1427-x},
         doi={10.1007/s00220-012-1427-x},
      review={\MR{2909758}},
}

\bib{MR2037238}{book}{
   author={Kock, Joachim},
   title={Frobenius algebras and 2D topological quantum field theories},
   series={London Mathematical Society Student Texts},
   volume={59},
   publisher={Cambridge University Press, Cambridge},
   date={2004},
   pages={xiv+240},
   isbn={0-521-83267-5},
   isbn={0-521-54031-3},
   review={\MR{2037238 (2005a:57028)}},
}

\bib{MR1257245}{article}{
      author={Longo, Roberto},
       title={A duality for {H}opf algebras and for subfactors. {I}},
        date={1994},
        ISSN={0010-3616},
     journal={Comm. Math. Phys.},
      volume={159},
      number={1},
       pages={133\ndash 150},
         url={http://projecteuclid.org/getRecord?id=euclid.cmp/1104254494},
      review={\MR{1257245 (95h:46097)}},
}

\bib{MR1444286}{article}{
      author={Longo, R.},
      author={Roberts, J.~E.},
       title={A theory of dimension},
        date={1997},
        ISSN={0920-3036},
     journal={$K$-Theory},
      volume={11},
      number={2},
       pages={103\ndash 159},
      eprint={\href{http://arxiv.org/abs/funct-an/9604008}{{\tt
  arXiv:funct-an/9604008 [math.FA]}}},
         url={http://dx.doi.org/10.1023/A:1007714415067},
         doi={10.1023/A:1007714415067},
      review={\MR{1444286 (98i:46065)}},
}

\bib{MR2989520}{article}{
      author={Mombelli, Mart{'\i}n},
       title={On the tensor product of bimodule categories over {H}opf
  algebras},
        date={2012},
        ISSN={0025-5858},
     journal={Abh. Math. Semin. Univ. Hambg.},
      volume={82},
      number={2},
       pages={173\ndash 192},
      eprint={\href{http://arxiv.org/abs/1111.1610}{{\tt arXiv:1111.1610
  [math.QA]}}},
         url={http://dx.doi.org/10.1007/s12188-012-0068-5},
         doi={10.1007/s12188-012-0068-5},
      review={\MR{2989520}},
}

\bib{MR1966524}{article}{
      author={M{\"u}ger, Michael},
       title={From subfactors to categories and topology. {I}. {F}robenius
  algebras in and {M}orita equivalence of tensor categories},
        date={2003},
        ISSN={0022-4049},
     journal={J. Pure Appl. Algebra},
      volume={180},
      number={1-2},
       pages={81\ndash 157},
      eprint={\href{http://arxiv.org/abs/math/0111204}{{\tt arXiv:math/0111204
  [math.CT]}}},
         url={http://dx.doi.org/10.1016/S0022-4049(02)00247-5},
         doi={10.1016/S0022-4049(02)00247-5},
      review={\MR{1966524 (2004f:18013)}},
}

\bib{MR2362670}{article}{
      author={Naidu, Deepak},
       title={Categorical {M}orita equivalence for group-theoretical
  categories},
        date={2007},
        ISSN={0092-7872},
     journal={Comm. Algebra},
      volume={35},
      number={11},
       pages={3544\ndash 3565},
      eprint={\href{http://arxiv.org/abs/math/0605530}{{\tt arXiv:math/0605530
  [math.QA]}}},
         url={http://dx.doi.org/10.1080/00927870701511996},
         doi={10.1080/00927870701511996},
      review={\MR{2362670}},
}

\bib{MR3426224}{article}{
      author={Neshveyev, Sergey},
       title={Duality theory for nonergodic actions},
        date={2014},
        ISSN={1867-5778},
     journal={M{\"u}nster J. Math.},
      volume={7},
      number={2},
       pages={413\ndash 437},
      eprint={\href{http://arxiv.org/abs/1303.6207}{{\tt arXiv:1303.6207
  [math.OA]}}},
      review={\MR{3426224}},
}

\bib{MR3204665}{book}{
      author={Neshveyev, Sergey},
      author={Tuset, Lars},
       title={Compact quantum groups and their representation categories},
      series={Cours Sp{\'e}cialis{\'e}s [Specialized Courses]},
   publisher={Soci{\'e}t{\'e} Math{\'e}matique de France, Paris},
        date={2013},
      volume={20},
        ISBN={978-2-85629-777-3},
      review={\MR{3204665}},
}

\bib{arXiv:1511.06332}{misc}{
      author={Neshveyev, Sergey},
      author={Yamashita, Makoto},
       title={A few remarks on the tube algebra of a monoidal category},
         how={preprint},
        date={2015},
      eprint={\href{http://arxiv.org/abs/1511.06332}{{\tt arXiv:1511.06332
  [math.OA]}}},
         doi={10.1017/S0013091517000426},
        note={to appear in Proc.~Edinb.~Math.~Soc.}
}

\bib{MR3509018}{article}{
      author={Neshveyev, Sergey},
      author={Yamashita, Makoto},
       title={Drinfeld {C}enter and {R}epresentation {T}heory for {M}onoidal
  {C}ategories},
        date={2016},
        ISSN={0010-3616},
     journal={Comm. Math. Phys.},
      volume={345},
      number={1},
       pages={385\ndash 434},
      eprint={\href{http://arxiv.org/abs/1501.07390}{{\tt arXiv:1501.07390
  [math.OA]}}},
         url={http://dx.doi.org/10.1007/s00220-016-2642-7},
         doi={10.1007/s00220-016-2642-7},
      review={\MR{3509018}},
}

\bib{MR1976459}{article}{
      author={Ostrik, Victor},
       title={Module categories, weak {H}opf algebras and modular invariants},
        date={2003},
        ISSN={1083-4362},
     journal={Transform. Groups},
      volume={8},
      number={2},
       pages={177\ndash 206},
      eprint={\href{http://arxiv.org/abs/math/0111139}{{\tt arXiv:math/0111139
  [math.QA]}}},
         url={http://dx.doi.org/10.1007/s00031-003-0515-6},
         doi={10.1007/s00031-003-0515-6},
      review={\MR{1976459 (2004h:18006)}},
}

\bib{MR0498794}{incollection}{
      author={Pareigis, Bodo},
       title={Non-additive ring and module theory. {IV}. {T}he {B}rauer group
  of a symmetric monoidal category},
        date={1976},
   booktitle={Brauer groups ({P}roc. {C}onf., {N}orthwestern {U}niv.,
  {E}vanston, {I}ll., 1975)},
   publisher={Springer, Berlin},
       pages={112\ndash 133. Lecture Notes in Math., Vol. 549},
      review={\MR{0498794}},
}

\bib{MR2358289}{article}{
      author={Pinzari, Claudia},
      author={Roberts, John~E.},
       title={A duality theorem for ergodic actions of compact quantum groups
  on {$C^*$}-algebras},
        date={2008},
        ISSN={0010-3616},
     journal={Comm. Math. Phys.},
      volume={277},
      number={2},
       pages={385\ndash 421},
      eprint={\href{http://arxiv.org/abs/math/0607188}{{\tt arXiv:math/0607188
  [math.OA]}}},
         url={http://dx.doi.org/10.1007/s00220-007-0371-7},
         doi={10.1007/s00220-007-0371-7},
      review={\MR{2358289 (2008k:46203)}},
}

\bib{MR1729488}{article}{
      author={Popa, Sorin},
       title={Some properties of the symmetric enveloping algebra of a
  subfactor, with applications to amenability and property {T}},
        date={1999},
        ISSN={1431-0635},
     journal={Doc. Math.},
      volume={4},
       pages={665\ndash 744 (electronic)},
      review={\MR{MR1729488 (2001c:46116)}},
}

\bib{MR3406647}{article}{
      author={Popa, Sorin},
      author={Vaes, Stefaan},
       title={Representation theory for subfactors, {$\lambda$}-lattices and
  {$\rm C^*$}-tensor categories},
        date={2015},
        ISSN={0010-3616},
     journal={Comm. Math. Phys.},
      volume={340},
      number={3},
       pages={1239\ndash 1280},
      eprint={\href{http://arxiv.org/abs/1412.2732}{{\tt arXiv:1412.2732
  [math.OA]}}},
         url={http://dx.doi.org/10.1007/s00220-015-2442-5},
         doi={10.1007/s00220-015-2442-5},
      review={\MR{3406647}},
}

\bib{MR1822847}{article}{
      author={Schauenburg, Peter},
       title={The monoidal center construction and bimodules},
        date={2001},
        ISSN={0022-4049},
     journal={J. Pure Appl. Algebra},
      volume={158},
      number={2-3},
       pages={325\ndash 346},
         url={http://dx.doi.org/10.1016/S0022-4049(00)00040-2},
         doi={10.1016/S0022-4049(00)00040-2},
      review={\MR{1822847 (2002f:18013)}},
}

\bib{MR2075600}{incollection}{
      author={Schauenburg, Peter},
       title={Hopf-{G}alois and bi-{G}alois extensions},
        date={2004},
   booktitle={Galois theory, {H}opf algebras, and semiabelian categories},
      series={Fields Inst. Commun.},
      volume={43},
   publisher={Amer. Math. Soc., Providence, RI},
       pages={469\ndash 515},
      review={\MR{2075600}},
}

\bib{MR1098988}{article}{
   author={Schneider, Hans-J\"urgen},
   title={Principal homogeneous spaces for arbitrary Hopf algebras},
   note={Hopf algebras},
   journal={Israel J. Math.},
   volume={72},
   date={1990},
   number={1-2},
   pages={167--195},
   issn={0021-2172},
   review={\MR{1098988}},
   doi={10.1007/BF02764619},
}

\bib{MR915993}{article}{
      author={Ulbrich, K.-H.},
       title={Galois extensions as functors of comodules},
        date={1987},
        ISSN={0025-2611},
     journal={Manuscripta Math.},
      volume={59},
      number={4},
       pages={391\ndash 397},
         url={http://dx.doi.org/10.1007/BF01170844},
         doi={10.1007/BF01170844},
      review={\MR{915993 (89a:16049)}},
}

\bib{MR2355067}{article}{
      author={Vaes, Stefaan},
      author={Vergnioux, Roland},
       title={The boundary of universal discrete quantum groups, exactness, and
  factoriality},
        date={2007},
        ISSN={0012-7094},
     journal={Duke Math. J.},
      volume={140},
      number={1},
       pages={35\ndash 84},
      eprint={\href{http://arxiv.org/abs/math/0509706}{{\tt arXiv:math/0509706
  [math.OA]}}},
         url={http://dx.doi.org/10.1215/S0012-7094-07-14012-2},
         doi={10.1215/S0012-7094-07-14012-2},
      review={\MR{2355067 (2010a:46166)}},
}

\bib{MR1614178}{article}{
   author={Van Oystaeyen, Fred},
   author={Zhang, Yinhuo},
   title={The Brauer group of a braided monoidal category},
   journal={J. Algebra},
   volume={202},
   date={1998},
   number={1},
   pages={96--128},
   issn={0021-8693},
   review={\MR{1614178}},
   doi={10.1006/jabr.1997.7295},
}

\bib{MR2075605}{incollection}{
      author={Yamagami, Shigeru},
       title={Frobenius algebras in tensor categories and bimodule extensions},
        date={2004},
   booktitle={Galois theory, {H}opf algebras, and semiabelian categories},
      series={Fields Inst. Commun.},
      volume={43},
   publisher={Amer. Math. Soc., Providence, RI},
       pages={551\ndash 570},
      review={\MR{2075605}},
}

\end{biblist}
\end{bibdiv}

\bigskip

\end{document}